\title{The Levelwise Finite Generation of Free
Tambara Functors}
\author{Emory Sun}
\date{}
\address{Columbia University}
\email{es4058@columbia.edu}
\begin{document}

\begin{abstract}
We prove the levelwise finite generation
of free polynomial $G$-Tambara functors when $G$ is a finite
Dedekind group.
In the process, we establish the permanence of various
finiteness conditions under box products and norms $n_H^G$
of Tambara functors, including
a weak Hilbert Basis Theorem.
\end{abstract}

\maketitle

\tableofcontents

\section{Introduction}

\subsection{Background}

Tambara functors are equivariant
analogues of commutative rings, appearing naturally
as ring structures associated to systems of representations.
Representation rings, Galois extensions,
Burnside rings, and even commutative rings with a $G$-action
in the simplest case all give rise to Tambara functors,
which consist of a collection of commutative rings
(known as the \textit{levels} of the Tambara functor)
with structure maps between them.

The notion of
a \textit{free polynomial Tambara functor,} first introduced
in \cite{blumberg2018incomplete}, is the equivariant
analogue of a free polynomial ring; they represent
the functors which send a Tambara functor to
its underlying set in a chosen level.
In some sense, these can be viewed as very large
Burnside rings---indeed, specializing
to the generator associated to the $G$-set $X = \varnothing$
recovers the usual Burnside ring.

Free polynomial Tambara functors both mirror and differ from
their classical counterparts in interesting ways.
A result of \cite{BRUN2005233} shows that
the bottom level of any free polynomial Tambara
functor $\A[X]$ is a free polynomial ring with
$\abs{X}$ generators.
On the other hand, in \cite{Hill_2022},
free Tambara functors are shown to be almost never
flat, a striking deviation from classical algebra.

However, not much is known about the levelwise
structure of polynomial Tambara functors aside from
a collection of specialized cases.
In \cite{schuchardt2025algebraicallyclosedfieldsequivariant},
the free polynomial Tambara functor on a top-level generator
is shown to be levelwise finitely generated.
A weak Hilbert basis theorem for free
polynomial Tambara functors was shown for $G = C_p$
by recent work in \cite{chan2025tambaraaffineline},
where a computation of their Nakaoka spectra is also
given.
Knowing such results in greater generality
allows us to better apply the wealth of knowledge from
classical commutative algebra in the equivariant setting.

\subsection{Main Results}

In this paper, we establish several finiteness
results for free polynomial Tambara functors.
First, a definition:

\begin{introdefn*}[Definition \ref{def:tambara_finite}]
A finite group $G$ is said to be
\textit{Tambara finite}
if the free polynomial
Tambara functor $\A[X]$ is levelwise
finitely generated for all finite $G$-sets $X$,
i.e. $\A[X](G/L)$ is a finite-type $\Z$-algebra
for all $L \leq G$. We say that $G$ is
\textit{transitively Tambara finite} or just
\textit{transitively finite} if $\A[X]$ is
levelwise finitely generated for all
\textit{transitive} finite $G$-sets $X$.
\end{introdefn*}

In \S\ref{sec:boxproduct}, we show that
being Tambara finite is equivalent to being
transitively Tambara finite.
The Tambara-finiteness of $G = C_p$ is established
in \cite{chan2025tambaraaffineline};
our main theorem is the extension of this result
to a much larger class of groups. More precisely, we prove:

\begin{introthm}[Theorem \ref{thm:finite_generation_general}]
Let $G$ be a finite Dedekind group, i.e. every subgroup
of $G$ is normal.
Then $G$ is Tambara finite.
\end{introthm}

In particular, the class of groups $G$
to which the theorem applies includes all finite abelian
groups and the quaternion group $Q_8$.

This is a fairly strong finiteness condition on $\A[X]$.
For general $G$, we prove a weaker theorem:

\begin{introthm}[Theorem \ref{thm:relatively_finite_general}]
Let $G$ be a finite group and $X$ be a finite $G$-set.
Then $\A[X]$ is relatively finite-dimensional,
in the sense that all restriction
maps $\Res_H^K$ are finite ring maps.
In particular, $\A[X]$ is module-Noetherian
(in the sense that
every submodule of a finitely generated module over $\A[X]$
is finitely generated)
when $G$ is Tambara finite.
\end{introthm}

The notion of a relatively finite-dimensional
Green or Tambara functor is new, due to
\cite{chanwisdom2025algebraicktheory},
and it is critical to extending any results obtained
for transitive $G$-sets to general $G$-sets,
as it is well-behaved under box products;
we will see this in \S\ref{sec:boxproduct}.
We also prove a weak Hilbert Basis Theorem
which generalizes the corresponding result for $G = C_p$
presented in \cite{chan2025tambaraaffineline}.


\begin{introthm}[Weak Hilbert Basis Theorem,
Theorem \ref{thm:hilbert_basis}]
Let $G$ be a Tambara finite group,
$X$ a finite $G$-set,
and $T$ a
levelwise Noetherian,
relatively finite-dimensional Tambara functor.
Then $T[X] = T \boxtimes \A[X]$ is also levelwise
Noetherian and relatively finite-dimensional.
If $T$ is levelwise finitely generated, then so is
$T[X]$.
\end{introthm}

In particular, under the assumptions above,
$T[X]$ is Noetherian, i.e. satisfies the ascending
chain condition on Tambara ideals.

Our proof of Theorem A proceeds by first establishing
transitive Tambara finiteness;
technical results on the box product of two Tambara functors
are developed in \S\ref{sec:boxproduct}
to show that transitively Tambara finite groups are also
Tambara finite. In particular,
a simple (noninductive) formula for the box product
of an arbitrary number of
$G$-Mackey functors for a finite group
$G$ is developed in \S\ref{sec:boxproduct}.

\begin{introthm}[Theorem \ref{thm:boxproduct}]
Let $G$ be a finite group,
$M_1, \ldots, M_N$ be a collection of $G$-Mackey functors
and write $M = M_1 \boxtimes \cdots \boxtimes M_N$. Fix a subgroup
$L \leq G$. For any subgroup $H \leq G$, define
\[
S_H = M_1(G/H) \otimes \cdots \otimes M_N(G/H).
\]
Then we have
\[
M(G/L) = \left( \bigoplus_{H \leq L} S_H \right)/F,
\]
where $F$ is the submodule generated by Frobenius and Weyl
relations, defined in \S\ref{sec:boxproduct}.
\end{introthm}

This generalizes previously known formulas for the box
product appearing in the literature for when $G$
is a cyclic $p$-group;
for instance, an inductive formula when
$G = C_{p^n}$ is described in \cite{MR3187614}.
The formula also has the advantage of having
an easy-to-describe ring structure when the $M_i$
are Tambara functors.
We use this to prove the following:

\begin{introthm}
Let $G$ be a finite group and
let $T, R$ be two levelwise finitely generated Tambara
functors. Then $T \boxtimes R$ is levelwise finitely generated.
\end{introthm}

This is a surprisingly delicate result, as it fails for Green
functors---a simple counterexample when $G = C_p$ is given in
\S\ref{sec:boxproduct}.



(Bi)incomplete Tambara functors are not considered in this paper,
which constitutes a direction for further investigation. While
we suspect that Theorem A holds for all
free polynomial Tambara functors for a finite group $G$,
it fails for Green functors in general
(in the sense that not all free Green functors for a finite group
$G$ are levelwise finitely generated).
In light of this fact, one might ask:
what conditions must one impose on
the indexing system of an incomplete Tambara
functor to ensure levelwise finite generation?



We note moreover
that a better understanding of the levelwise ring
structure of the free polynomial functors $\A[X]$
can be achieved if more explicit information were
known about the ring structure of the
norms $n_H^G$ of Tambara functors,
$H \leq G$. The norm functor $n_H^G$ is the left adjoint
to the restriction functor $r_H^G$ which sends
a $G$-Tambara functor to an $H$-Tambara functor via
precomposition by induction of $H$-sets---see
\cite{hill2019equivariant}
and \cite{rolfthesis}. In fact, our results show:

\begin{introthm}[Corollary \ref{cor:norm_preservation_general},
Theorem \ref{thm:norm_preservation}]
The norm functors $n_H^G$ preserve levelwise
finite generation over $\Z$ for all
finite $H \leq G$ iff all finite groups are
Tambara finite.
If
$H \leq G$ and $G$ is
a finite Dedekind group,
then $n_H^G$ preserves levelwise finite generation.
\end{introthm}


This leads to the natural followup
question:

\begin{question}
Let $T$ be an $H$-Tambara functor which is
levelwise Noetherian/relatively
finite-dimensional. Is the same true for
$n_H^G T$?
\end{question}

\subsection{Acknowledgements}

This work was completed as part of the 2025
Mathematics REU program at the University of Chicago.
I am deeply indebted to my REU mentor Noah Wisdom, for without
his guidance this project would not have been possible.
I would also like to thank David Chan,
Danika Van Niel, and David Mehrle for
their patience and willingness
to explain their work to me. I am incredibly grateful
to Peter May for his comments,
from which the organization of the paper has benefitted immensely.
Furthermore, I would like
to thank Michael A. Hill for his numerous suggested revisions
and for identifying many important errors.

\section{Review of Mackey and Tambara Functors}

\subsection{The Polynomial Category}

We briefly review the notions of equivariant
algebra we will need and establish some conventions
for the rest of this text. For details, we refer the reader
to \cite{strickland2012tambarafunctors}.
Fix a finite group $G$. We use $\bispan_G$ to denote the
\textit{category of bispans of finite $G$-sets} or
\textit{category of polynomials of finite $G$-sets,} where
objects are finite $G$-sets and morphisms are isomorphism classes
of \textit{polynomials} $[X \xleftarrow{p} A \xrightarrow{q} B
\xrightarrow{r} Y]$, and $\bispan_G^+$ the
\textit{category of spans of finite $G$-sets,} the subcategory
of $\bispan_G$ containing all the objects and where morphisms
are polynomials above such that $q$ is an isomorphism.
Here, an isomorphism of polynomials is described by a commutative
diagram of the form
\[
\begin{tikzcd}
& A \ar[dl] \ar[dd, "\cong"]\ar[r] & B \ar[dd, "\cong"] \ar[dr] & \\
X & & & Y \\
& A' \ar[ul] \ar[r] & B' \ar[ur] &
\end{tikzcd}
\]
A composition
of polynomials $[X \xleftarrow{p} A \xrightarrow{q} B
\xrightarrow{r} Y]$ and $[Y \xleftarrow{p'} A' \xrightarrow{q'}
B' \xrightarrow{r'} Z]$
is given by $[X \ot A'' \to B'' \to Z]$ in the diagram
\[
\begin{tikzcd}
& & A'' \ar[dl] \ar[r] & W \ar[dl]\ar[dr]
\ar[r] & B'' \ar[dr] & & \\
& A \ar[dl] \ar[r]
& B \ar[dr] & & A' \ar[dl] \ar[r]
& B' \ar[dr] & \\
X & & & Y & & & Z
\end{tikzcd}
\]
where $B'' = \{(b', s) \mid s: {q'}^{-1}(b') \to B, rs = p'\}$,
\[W = B'' \times_{B'} A' =
\{(a', s) \mid s: {q'}^{-1}(q'(a')) \to B, rs = p'\},\]
$W \to B$ is given by $(a', s) \mapsto s(a')$, and
$A'' = W \times_B A$
(see \cite{strickland2012tambarafunctors}).

There are three distinguished types of morphisms in
$\bispan_G$. For any map $f: X \to Y$ of finite $G$-sets,
we write
\[
R_f = [Y \xleftarrow{f} X \xrightarrow{1} X \xrightarrow{1} X],
\quad
N_f = [X \xleftarrow{1} X \xrightarrow{f} Y \xrightarrow{1} Y],
\quad
T_f = [X \xleftarrow{1} X \xrightarrow{1} X \xrightarrow{f} Y].
\]
These are called \textit{restriction,} \textit{norm,} and
\textit{transfer} along $f$, respectively.

Let $g \in G$.
for any $H \leq G$, there is
a $G$-isomorphism $f_g: G/H \to G/gHg^{-1}$ defined by
$xH \mapsto xHg^{-1} = xg^{-1}(gHg^{-1})$. Transfer along $f_g$
is denoted by $C_g$, and in fact
\[
C_g = T_{f_g} = R_{f_g}^{-1} = N_{f_g}.
\]

\subsection{Mackey and Tambara Functors}

\begin{defn}
A \textit{$G$-semi-Mackey functor} is a product-preserving functor
$\bispan_G^+ \to \cat{Set}$. A \textit{$G$-semi-Tambara functor}
is a product-preserving functor $\bispan_G \to \cat{Set}$.
\end{defn}

Products in $\bispan_G$ and $\bispan_G^+$ are defined by the disjoint
union on the level of objects.
It is well-known that every semi-Mackey functor takes values
in commutative monoids, and that every semi-Tambara functor takes
values in commutative semirings.

\begin{defn}
A \textit{$G$-Mackey functor} is a semi-Mackey functor which takes
values in (abelian) groups.
A \textit{$G$-Tambara functor} is a semi-Tambara
functor which takes values in (commutative) rings.
\end{defn}

In particular, the additive completion of a semi-Mackey functor is
a Mackey functor, and the additive completion of a semi-Tambara
functor is a Tambara functor
(see \cite[\S6]{Tambara01011993}).

There is an equivalent description of Mackey and Tambara
functors using only the transitive $G$-sets, which we now describe.

\begin{defn}
A \textit{$G$-Mackey functor} $M$ consists of the following data:
\begin{enumerate}
\item For each subgroup $H \leq G$, an abelian group
$M(G/H)$.

\item For each $g \in G$ and $H \leq G$, an isomorphism
$c_g: M(G/H) \to M(G/gHg^{-1})$ (the dependence on $H$ is
suppressed in the notation)
such that $c_h: M(G/H) \to M(G/H)$ is the
identity map for all $h \in H$ and
$c_g: M(G/H) \to M(G/gHg^{-1})$,
$c_{g'}: M(G/gHg^{-1}) \to M(G/g'gH(g'g)^{-1})$ satisfy
$c_{g'} c_g = c_{g'g}$ for all $g, g' \in G$.

\item For each subgroup inclusion $H \leq K$, group
homomorphisms $\Res_H^K: M(G/K) \to M(G/H)$ and
$\Tr_H^K: M(G/H) \to M(G/K)$ such that
\begin{enumerate}[label=(\roman*)]
\item $\Res_H^H = \Tr_H^H = \id$ for all $H$.
\item $\Res_H^K \Res_K^L = \Res_H^L$ and
$\Tr_K^L \Tr_H^K = \Tr_H^L$ whenever $H \leq K \leq L$.
\item For all $g \in G$,
\[
c_g \Res_H^K = \Res_{gHg^{-1}}^{gKg^{-1}} c_g,
\quad c_g \Tr_H^K = \Tr_{gHg^{-1}}^{gKg^{-1}} c_g
\]
whenever $H \leq K$.
\item (Double coset formula) Whenever $H, K \leq L$,
\[
\Res_K^L \Tr_H^L = \sum_{g \in K \backslash L/ H}
\Tr_{K \cap gHg^{-1}}^K c_g \Res_{H \cap g^{-1}Kg}^H.
\]
\end{enumerate}
\end{enumerate}
A semi-Mackey functor is obtained by relaxing the condition
that each $M(G/H)$ is an abelian group, requiring only
a commutative monoid instead.
\end{defn}

\begin{defn}
A \textit{$G$-Tambara functor $T$} consists of the following data:
\begin{enumerate}
\item For each subgroup $H \leq G$, a commutative ring
$T(G/H)$.

\item For each subgroup inclusion $H \leq K$, maps
$\Res_H^K: T(G/K) \to T(G/H)$, $\Tr_H^K: T(G/H) \to T(G/K)$,
and $\Nm_H^K: T(G/H) \to T(G/K)$, and for each
$g \in G$, isomorphisms $c_g: T(G/H) \to T(G/gHg^{-1})$ such that
\begin{enumerate}[label=(\roman*)]
\item The family of maps $(\{\Res_H^K\}, \{\Tr_H^K\}, \{c_g\})$
turns $T$ into a Mackey functor with respect to the additive
group structure.

\item The family of maps $(\{\Res_H^K\}, \{\Nm_H^K\}, \{c_g\})$
turns $T$ into a semi-Mackey functor under the multiplicative
monoid structure.
\end{enumerate}

\item (Exponential formula) For any exponential diagram
\[
\begin{tikzcd}
X \dar{} & A \lar{} & \lar{} X \times_Y \Pi_f A \dar{} \\
Y & & \Pi_f A \ar[ll]
\end{tikzcd}
\]
of finite $G$-sets, the following diagram commutes:
\[
\begin{tikzcd}
T(X) \ar[d, "\Nm", swap] & T(A) \ar[l, "\Tr",swap] \ar[r, "\Res"] &
T(X \times_Y \Pi_f A) \dar{\Nm} \\
T(Y) & & T(\Pi_f A) \ar[ll, "\Tr"]
\end{tikzcd}
\]
See \cite[\S1]{Tambara01011993} for more on exponential
diagrams.
\end{enumerate}
\end{defn}

\begin{rem}
The maps $\Res_H^K, \Tr_H^K, \Nm_H^K$ are the maps
induced by restriction, transfer,
and norm along the projection $G/H \to G/K$, while $c_g$
is the map induced by $C_g$. Note that the maps
$c_g$ induce an action of $W_H \coloneqq N(H)/H$ on $T(G/H)$, where
$N(H)$ is the normalizer of $H$ in $G$; $W_H$ is the
\textit{Weyl group} of $H$.
\end{rem}

\begin{rem}
Only (c) is not expressed precisely in terms of the data
we have given, as there is no
succinct way to express the exponential formula using
only transitive $G$-sets. When $X$ is not transitive, say
$X = \coprod_i G/H_i$, we extend the definition
of $T$ by setting $T(X) = \prod_i T(G/H_i)$;
there is also a way to extend the transfer and norm maps
using only the data given on the transitive sets.
A consequence of the exponential formula we will use
repeatedly is Frobenius reciprocity,
\[
x \Tr_H^K(y) = \Tr_H^K ( \Res_H^K(x) y)
\]
whenever defined.
\end{rem}

\begin{rem}
In most examples, we will
represent the data of a Mackey or Tambara functor
via its \textit{Lewis diagram,} a diagram of all the $T(X)$ for
transitive $X$ with only the maps $\Res_H^K, \Tr_H^K,
\Nm_H^K, c_g$
labeled.
\end{rem}

\begin{rem}
We will occasionally reference the notion of a
(commutative) \textit{Green functor,}
a Mackey functor taking values in commutative rings such that the
$\Res_H^K$ and $c_g$ are ring maps, and such that transfer
and restriction satisfy the Frobenius
reciprocity relations above. In particular, the
data of a Green functor
does not come with norm maps $\Nm_H^K$.
\end{rem}

\section{Free Polynomial Tambara Functors}

\subsection{Basic Properties of Free Tambara Functors}

For any two finite $G$-sets $X, Y$,
the set of morphisms $\bispan_G(X,Y)$ has a semiring structure, with
addition given by
\[
[X \ot A \to B \to Y] + [X \ot A' \to B' \to Y]
= [X \ot A \sqcup A' \to B \sqcup B' \to Y],
\]
multiplication given by
\[
[X \ot A \to B \to Y] \cdot [X \ot A' \to B' \to Y]
= [X \ot (A \times_Y B') \sqcup (A' \times_Y B)
\to B \times_Y B' \to Y],
\]
and additive and multiplicative identities given by
\[
[X \ot \varnothing \to \varnothing \to Y], \quad
[X \ot \varnothing \to Y \to Y]
\]
respectively (see \cite[Proposition 7.6]{Tambara01011993}).

\begin{defn}
Given a finite $G$-set $X$, the
\textit{free polynomial Tambara functor} or the
\textit{polynomial Tambara functor}
on $X$, denoted $\A[X]$, is the additive completion of
the representable functor
\[
\bispan_G(X, -): \bispan_G \to \cat{Set}.
\]
\end{defn}

In particular, we have a natural isomorphism
\[
\Hom_{G\cat{Tamb}}(\A[X], T) \cong
T(X)
\]
for any $G$-Tambara functor $T$ and finite $G$-set $X$.
When $X = \varnothing$, $\A \coloneqq \A[\varnothing]$
is the \textit{Burnside Tambara functor,} the initial
object in the category of Tambara functors.

We cite some basic facts about the structure maps
in a polynomial Tambara functor.

\begin{lem}[{\cite[Proposition 3.32]{Hill_2022}}]
\label{lem:transfer_and_restriction}
Let $H \leq G$, $f: Y \to Z$ be a map
of finite $G$-sets. The action
of post-composition by $T_f$ is defined
on elements of $\bispan_G(G/H, Y)$ by sending
\[
[G/H \ot A \to B \xrightarrow{r} Y] \mapsto
[G/H \ot A \to B \xrightarrow{f \circ r} Z],
\]
while post-composition by $R_f$ is defined
by sending
\[
[G/H \ot A \to B \to Z]
\mapsto
[G/H \ot A \times_Z Y \to B \times_Z Y \to Y].
\]
\end{lem}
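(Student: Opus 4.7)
The plan is to obtain both formulas by direct application of the composition formula for polynomials recalled in the definition of $\bispan_G$ earlier, with the simplifying feature that both $T_f$ and $R_f$ have identity as their middle ``norm'' leg.

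First consider $T_f$. We compose $\phi = [G/H \ot A \to B \xrightarrow{r} Y]$ with $T_f = [Y \xleftarrow{1} Y \xrightarrow{1} Y \xrightarrow{f} Z]$, reading off the second polynomial data as $p' = 1_Y$, $q' = 1_Y$, $r' = f$. Because $q'$ is the identity, the set $B''$ in the composition formula becomes $\{(b',s) \mid s \colon \{b'\} \to B,\ r(s(b'))=b'\}$, which is just a repackaging of $B$ via $b \mapsto (r(b),b)$; similarly $W = B''\times_{B'}Y \cong B$ and $A'' = W\times_B A \cong A$. Tracing the final leg, the map $B''\to Z$ equals $r'$ composed with the projection $B''\to B'=Y$, which under $B''\cong B$ is exactly $f\circ r$. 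This yields the stated formula for $T_f$.

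Next consider $R_f$, i.e.\ composing $\psi = [G/H \ot A \to B \to Z]$ with $R_f = [Z \xleftarrow{f} Y \xrightarrow{1} Y \xrightarrow{1} Y]$, now with data $p' = f$, $q' = 1_Y$, $r' = 1_Y$. Again $(q')^{-1}(b')$ is a singleton, so $B''$ reduces to $\{(b',b)\in Y\times B \mid r(b)=f(b')\}$, which is precisely the pullback $B\times_Z Y$; and since $A' = Y = B'$, the intermediate $W$ simplifies to $W = B''$, with the projection $W \to B$ given by $(b',b)\mapsto b$. The pullback $A'' = W\times_B A$ then becomes $A\times_B(B\times_Z Y) \cong A\times_Z Y$ (where $A\to Z$ is the composite $A\to B\to Z$). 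Tracking the outer maps, the initial leg $A''\to G/H$ factors through $A$, and the terminal leg $B''\to Y$ is the projection, since $r'$ is the identity. This recovers the stated formula for $R_f$.

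There is no genuine obstacle here, since the claim is a mechanical unwinding of the composition formula. The only real care needed is bookkeeping: verifying that the identifications $B''\cong B$ (resp.\ $B''\cong B\times_Z Y$) are compatible with the outer maps into $Z$ (resp.\ $Y$), which follows immediately from the concrete descriptions above.
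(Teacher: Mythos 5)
Your proof is correct. The paper does not actually prove this lemma; it cites it directly from Hill's paper (as Proposition 3.32 there), so there is no in-paper argument to compare against. Your direct unwinding of the composition formula — exploiting that $q'$ is an identity so ${q'}^{-1}(b')$ is a singleton, and tracking the identifications $B'' \cong B$ (resp.\ $B'' \cong B\times_Z Y$) through the outer legs — is the natural and complete verification, and matches what the cited reference does.
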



Let $H$ be a subgroup of $G$. In order to understand
$\A[G/H]$, it suffices to understand the structure of
$\A[G/H](G/L)$ for $L \leq G$.
By splitting up the direct summands of a polynomial
$G/H \to G/L$, we see that every element of
$\A[G/H](G/L)$ is uniquely
a sum of (isomorphism classes of) polynomials of the form
\[
G/H \ot \coprod_i G/H_i \to G/K \to G/L,
\]
and their formal inverses,
i.e. polynomials $G/H \ot A \to B \to G/L$
where $B$ is transitive. We will call such a polynomial
\textit{irreducible.} It is clear that isomorphism classes
of irreducible polynomials form a $\Z$-basis for
$\A[G/H](G/L)$.

Recall the following definition:

\begin{defn}
A finite group $G$ is said to be a \textit{Dedekind group}
if all of its subgroups are normal.
\end{defn}

The only facts about Dedekind groups which will be
relevant to us are the following.
\begin{enumerate}
\item There exists a $G$-equivariant map
$G/H \to G/K$ iff $H \leq K$. In this case, the equivariant map
$G/H \to G/K$ sending $H$ to $gK$ is well-defined
for all $g \in G$.

\item For any two subgroups $H, K \leq G$, $HK = KH$
is also a subgroup.

\item All double cosets $H \backslash G /K$ are the same
as the one-sided cosets $G/HK = HK \backslash G$.
\end{enumerate}

When $G$ is a Dedekind group, any irreducible
polynomial of the form
\[
G/H \ot \coprod_i G/H_i \to G/K \to G/L
\]
must satisfy $H_i \leq K \leq L$ and $H_i \leq H$.
Furthermore, by composing appropriate elements of $G$
to $G/K$ and each factor of $G/H_i$, we can arrange
for the representative for the
isomorphism class of the diagram above to be
of the form
\[
\begin{tikzcd}
G/H & \coprod_i G/H_i \ar[l, "\sqcup_i f_i", swap]
\ar[r, "\sqcup_i \pr"] & G/K \ar[r, "\pr"]
& G/L
\end{tikzcd}
\]
where $\pr$ is generic notation for the natural projection
$G/S \to G/T$ whenever $S \leq T$ are subgroups
and $f: G/H \to G/K$ denotes the equivariant map
sending $H$ to $fK$.
Thus, an element $\A[G/H](G/L)$ can be represented by some tuple
$((H_1, f_1), \ldots, (H_n, f_n))_K$, where $f_i$ are elements of
$G/H$ which keep track of the image of $H_i$ under
the component $G/H_i \to G/H$.
Note here that the order of the
$(H_i, f_i)$ does not matter,
as shuffling the order gives an isomorphic
polynomial.

\begin{lem}\label{lem:equivalence}
Let $G$ be a finite Dedekind group.
Two tuples $((H_1, f_1), \ldots, (H_n, f_n))_K$ and
$((H_1', g_1), \ldots, (H_m', g_n))_K$
yield isomorphic polynomials iff $K = K'$, $m = n$, we have
$H_i' = H_i$ up to reordering, and the following condition
is true: there exists some $\ell \in L/K$ and lifts
$\ell_1, \ldots, \ell_n \in L$ such that
\[
\ell_i f_i = g_i
\]
up to reordering.
\end{lem}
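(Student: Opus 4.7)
The plan is to unpack what it means for an isomorphism of polynomials to exist and pin down the required data on each of the three ``panels'' of the defining commutative diagram. An isomorphism between the two polynomials is a pair consisting of a $G$-isomorphism $\psi: G/K \to G/K'$ (between the ``$B$'' vertices) and a $G$-isomorphism $\varphi: \coprod_i G/H_i \to \coprod_j G/H_j'$ (between the ``$A$'' vertices), subject to commutativity of the right triangle (over $G/L$), the middle square (over $\psi$), and the left triangle (under $G/H$).

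First I would analyze $\psi$. Since $G$ is Dedekind, a $G$-isomorphism $G/K \to G/K'$ forces $K = K'$. Compatibility with the projections to $G/L$ then pins $\psi$ down to the form $xK \mapsto x\ell K$ for some $\ell \in L$, so $\psi$ is recorded by a single element $\ell \in L/K$. Next, $\varphi$ decomposes as a disjoint union of $G$-isomorphisms between components, which, again by Dedekind-ness, forces $m = n$ together with a bijection $\sigma$ of indices satisfying $H_i = H_{\sigma(i)}'$. Each restriction $G/H_i \to G/H_{\sigma(i)}'$ must be of the form $xH_i \mapsto x\ell_i H_i$ for some $\ell_i \in G$.

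The middle square then evaluates on the basepoint $H_i$ to the condition $\ell_i K = \ell K$ in $G/K$. This has two consequences: every $\ell_i$ can be chosen in $L$ (since $\ell K \subseteq L$), and all the $\ell_i$ are lifts of a common coset $\ell \in L/K$. The left triangle, evaluated on the basepoint, yields $\ell_i g_{\sigma(i)} H = f_i H$ in $G/H$. Reindexing along $\sigma$ (equivalently, passing to the inverse isomorphism and using normality of $K$ to replace $\ell_i$ by $\ell_i^{-1}$ without leaving the common coset), this is exactly the condition $\ell_j f_j = g_j$ ``up to reordering'' stated in the lemma. The converse direction just reads the construction backwards: given combinatorial data $(\ell, \ell_1, \ldots, \ell_n, \sigma)$ of the specified form, one defines $\psi$ and $\varphi$ by the same formulas and verifies that all three panels commute.

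The only nonroutine aspect I expect is the bookkeeping with the permutation $\sigma$ and the swap between $\ell_i$ and $\ell_i^{-1}$ when going between the isomorphism and its inverse. Because $K \trianglelefteq G$, the map $\ell \mapsto \ell^{-1}$ is a well-defined involution of $L/K$, so this swap does not alter the ``exists a common coset $\ell \in L/K$'' clause; that observation is what makes the symmetric statement $\ell_i f_i = g_i$ equivalent to what falls out of the direct computation.
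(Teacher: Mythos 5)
Your proposal is correct and follows essentially the same route as the paper's proof: unpack the three panels of the isomorphism diagram, deduce that the $B$-level map is an element of $L/K$, that the component maps $\ell_i$ must lift it, and that the left triangle forces the relation between the $f_i$ and $g_i$. Your explicit handling of the permutation $\sigma$ and the inverse swap (which reconciles the computation's $f_i = \ell_i g_{\sigma(i)}$ with the statement's $\ell_i f_i = g_i$) is slightly more careful than the paper's, but the argument is the same.
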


\begin{proof}
The condition that $K = K'$ and $\{H_i\} = \{H_i'\}$ is obvious,
since $G/K \cong G/K'$ as $G$-sets iff $K = K'$. Thus we want
to determine when there
exists an isomorphism of irreducible polynomials
\[
\begin{tikzcd}
& \coprod_i G/H_i \ar[dl, "\sqcup_i f_i", swap] \ar[dd, "\sqcup r_i"]
\ar[r, "\sqcup_i \pr"] & G/K \ar[dr, "\pr"] \ar[dd, "s"] & \\
G/H & & & G/L \\
& \coprod_i G/H_i \ar[ul, "\sqcup_i g_i"] \ar[r, "\sqcup_i \pr", swap]
& G/K \ar[ur, "\pr", swap] &
\end{tikzcd}
\]
We see that in order for the diagram to commute, we must have
$s \in L/K$, $r_i \in L/H_i$ must lift $s \in L/K$, and
$f_i = r_ig_i$.
\end{proof}

\begin{rem}
This equivalence relation is hard to describe cleanly
and is a source of difficulty for tracking the combinatorics
involved in analyzing the ring structure of $\A[G/H]$.
We will later isolate some special cases for which
this task is easier.
\end{rem}

\subsection{A Levelwise Grading
for Polynomials}

The polynomial Tambara functors admit a levelwise $\N$-grading
for arbitrary finite $G$; when $G$ is a Dedekind group,
an explicit computation offers a slight refinement.

\begin{defn}
Let $G$ be a finite group
and $b = [G/H \ot A \to B \to G/L]$ be a nonzero irreducible polynomial,
i.e. one where $B$ is transitive
and nonempty. We define the \textit{degree}
of $b$ to be the value $\abs{A}/\abs{B}$.
\end{defn}

From this, we see that $\A[G/H](G/L)$ admits
an $\N$-grading $\A[G/H](G/L) \cong \bigoplus_{d \geq 0} S_d$,
where $S_d$ is the subgroup generated by the irreducible
polynomials of degree $d$. We now develop some basic results
that will allow us to describe the homogeneous elements concretely.

\begin{lem}\label{lem:degree_of_fiber}
Let $G$ be a finite group and let $A, B$ be finite
$G$-sets, with $B$ transitive and nonempty.
Suppose $f: A \to B$
is a map of $G$-sets. Then every fiber of $f$
has cardinality $\abs{A}/\abs{B}$.
\end{lem}

\begin{proof}
Suppose $A = G/H$ and $B = G/K$ with
$H \leq K$, and moreover that $f$ is the natural projection
map $G/H \to G/K$ defined by $gH \mapsto gK$.
The general case will then follow
upon writing $A$ as a disjoint union of
transitive $G$-sets and observing that
any map of $G$-sets $G/H \to G/K$ is can be decomposed
into a map as above followed by an isomorphism.

We claim
that the cardinality of every fiber of $f$
is $\abs{K}/\abs{H} = \abs{A}/\abs{B}$. Indeed,
for $g \in G$, the fiber over $gK \in G/K$
consists of those cosets $xH \in G/H$ such that
$g^{-1}x \in K$. Choosing a set of representatives
$x_1 H, \ldots, x_r H$ for $K/H$, these are precisely
the cosets of the form $gx_1 H, \ldots, gx_r H$.
\end{proof}

We also make the following definition:

\begin{defn}
Let $f: A \to B$ be a map of finite $G$-sets. If $B$
is nonempty and every
fiber of $f$ has the same cardinality $n$, we say that
\textit{$f$ has degree $n$} and write $\deg(f) = n$.
\end{defn}

\begin{lem}
A polynomial $b = [G/H \ot A \to B \to G/L]$
is homogeneous under the grading above
if the map $A \to B$ has a well-defined degree.
\end{lem}

\begin{proof}
Follows from Lemma \ref{lem:degree_of_fiber}.
\end{proof}

Thus, the degree of a polynomial
$b = [G/H \ot A \to B \to G/L]$ where $B$ is not
transitive is well-defined provided that
$A \to B$ has a well-defined degree.

Before moving on to Dedekind groups, we note a result
here that we will need for future computations.

\begin{lem}\label{lem:orbit_decomposition}
Let $G$ be a finite group, $L \leq G$, and $H, K \leq L$.
There exists an isomorphism
\[
G/H \times_{G/L} G/K \cong \coprod_{x \in H \backslash L / K}
G/(H \cap xKx^{-1})
\]
where the coproduct varies over choices of representatives
for the double cosets in $H \backslash L / K$.
\end{lem}

\begin{proof}
Observe that $G/H \times_{G/L} G/K$
is a union of orbits of the form $G \cdot (H, sK)$,
where $s \in L$. Two such orbits $G \cdot (H, sK)$
and $G \cdot (H, s'K)$ coincide if there exists some element
$h \in H$ such that $hsK = s'K$, which is equivalent to saying that
$s$ and $s'$ represent the same double coset in $H \backslash L / K$.
Since the stabilizer of $(H, sK)$ is $H \cap sKs^{-1}$, the result
follows.
\end{proof}

\begin{defn}
For any finite group $G$, let $\Oh_G^w$ consist of the pairs
\[
(n, K)
\]
where $K \leq G$ and $n \in \Z_{\geq 0}$ is a nonnegative
integer.
$\Oh_G^w$ assembles into a commutative monoid under the operation
\[
(n, K) + (m, L) = (n + m, K \cap L).
\]
In other words, $\Oh_G^w \cong \N \times \Oh_G$ as a monoid,
where $\Oh_G$ is the monoid of subgroups
of $G$ under intersection.
\end{defn}

\begin{defn}
When $G$ is Dedekind,
define the \textit{refined degree}
of an irreducible polynomial \[((H_1, f_1), \ldots,
(H_n, f_n))_K\]
to be
\[
\left( \sum_i \frac{\abs{K}}{\abs{H_i}}, K \right) \in
\Oh_L^w \subseteq \Oh_G^w.
\]
Here, the integer $\sum_i \abs{K}/\abs{H_i}$ is
again the degree of the map
\[
\coprod_i G/H_i \to G/K.
\]
In this case, the refined degree of a polynomial
$b = [G/H \to A \to B \to G/L]$ where
$B$ is not transitive is well-defined provided that
$A \to B$ has a well-defined degree,
and when writing $B = \coprod_i G/K_i$, we have
$K_i = K_j$ for all $i, j$.

When working with Dedekind groups, we will simply refer
to the refined degree as the \textit{degree} and refer
to the integer $\sum_i \abs{K}/\abs{H_i}$
as the \textit{numerical degree} when there is potential
for ambiguity.
\end{defn}

For the rest of this section, we will assume by default
that $G$ is Dedekind; however, most of the
results are analogous in the general case and can be obtained by
simply dropping the non-numerical component of $\Oh_L^w$.

It follows immediately from definition that
\[
\A[G/H](G/L) \cong \bigoplus_{d \in \Oh_L^w} S_d,
\]
as groups,
where $S_d$ is the group generated by the irreducible
polynomials of degree $d \in \Oh_L^w$. This also gives
a description of $\A[G/H]/(G/L)$ as a graded ring.

\begin{lem}
$((H_1, f_1), \ldots, (H_n, f_n))_K \cdot
((H_1', g_1), \ldots, (H_m', g_m))_{K'}$ is
a homogeneous element of degree
\begin{align*}
&\deg ((H_1, f_1), \ldots, (H_n, f_n))_K
+ \deg ((H_1', g_1), \ldots, (H_m', g_m))_{K'} \\
&= \left( \sum_i \frac{\abs{K}}{\abs{H_i}}
+ \sum_j \frac{\abs{K'}}{\abs{H_j'}},
K \cap K' \right)
\end{align*}
\end{lem}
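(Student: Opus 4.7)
The plan is to apply the multiplication formula in $\bispan_G(G/H, G/L)$ directly and then analyze the pullbacks appearing in the middle term. Writing $Y = G/L$, $A = \coprod_i G/H_i$, $B = G/K$, $A' = \coprod_j G/H_j'$, $B' = G/K'$, the product is represented by the polynomial
\[
[G/H \ot (A \times_Y B') \sqcup (A' \times_Y B) \to B \times_Y B' \to G/L].
\]
By the double coset formula and the Dedekind hypothesis (which gives $K \backslash L/K' = L/KK'$ and $K \cap \ell K' \ell^{-1} = K \cap K'$ for every $\ell \in L$), the pullback decomposes as
\[
G/K \times_{G/L} G/K' \;\cong\; \coprod_{c=1}^{\,|L|/|KK'|} G/(K \cap K').
\]
Splitting the product polynomial along these $|L|/|KK'|$ orbits expresses it as a sum of irreducible polynomials, each with $B$-term $G/(K \cap K')$. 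This immediately verifies the second coordinate of the claimed degree.

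The substantive content is verifying that the first coordinate is the same for every irreducible component. First I would decompose $A \times_Y B'$ summand-by-summand: each $G/H_i \times_{G/L} G/K'$ is a disjoint union of $|L|/|H_i K'|$ copies of $G/(H_i \cap K')$, indexed by $L/H_i K'$. Under the map $A \times_Y B' \to B \times_Y B'$, the orbit labels project along $L/H_i K' \twoheadrightarrow L/KK'$, so precisely $|KK'|/|H_i K'|$ source orbits land in each target orbit, and on each of these the restricted map $G/(H_i \cap K') \to G/(K \cap K')$ has degree $|K \cap K'|/|H_i \cap K'|$.

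The key calculation is that the contribution of the $i$-th summand of $A$ to any fixed target orbit is
\[
\frac{|KK'|}{|H_i K'|} \cdot \frac{|K \cap K'|}{|H_i \cap K'|} \;=\; \frac{|K|}{|H_i|},
\]
which follows by substituting $|KK'| = |K||K'|/|K \cap K'|$ and $|H_i K'| = |H_i||K'|/|H_i \cap K'|$ and cancelling. Crucially this value is independent of the target orbit. Summing over $i$ gives a contribution of $\sum_i |K|/|H_i|$ to each component, and the symmetric analysis of $A' \times_Y B$ contributes $\sum_j |K'|/|H_j'|$. Thus every irreducible component of the product has degree $\bigl(\sum_i |K|/|H_i| + \sum_j |K'|/|H_j'|,\, K \cap K'\bigr)$, as claimed.

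The main obstacle is the bookkeeping for the double-coset decomposition and confirming that the fiberwise degrees really are constant across target orbits; once the identity $|KK'||K \cap K'| = |K||K'|$ is brought to bear, the rest is routine.
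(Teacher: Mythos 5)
Your argument is correct, but it takes a noticeably more hands-on route than the paper's. The paper's proof dispatches the first coordinate of the degree in one line by invoking the observation that pullback preserves degree: since $A \to B$ has constant fiber cardinality $n$, so does $A \times_Y B' \to B \times_Y B'$ (pullback of a degree-$n$ map along $B \times_Y B' \to B$), and likewise $A' \times_Y B \to B \times_Y B'$ has degree $m$, so the disjoint union has degree $n+m$; the only role of the Dedekind hypothesis is in identifying the orbit type $K \cap K'$ via the double coset decomposition. You instead reconstruct this degree count from scratch: you decompose $A \times_Y B'$ orbit-by-orbit using double cosets, count $|KK'|/|H_iK'|$ source orbits over each target orbit, and verify via the identity $|KK'| \, |K \cap K'| = |K| \, |K'|$ that the fiberwise degree is the constant $|K|/|H_i|$. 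This works, but it makes essential use of Dedekind-specific facts (e.g.\ $KK'$ being a subgroup and $K \cap gK'g^{-1} = K \cap K'$) that the paper's argument avoids for the numerical part. What your approach buys is an explicit orbit-level picture of the product, which is closer in spirit to the explicit multiplication formula proved later in Lemma~\ref{lem:multiplication_special}; what the paper's approach buys is brevity and the insight that the numerical degree is well-behaved under pullback for any finite $G$, with the Dedekind hypothesis isolated to the subgroup component.
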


\begin{proof}
Recall that multiplication on polynomials is given by
\[
[X \ot A \to B \to Y] \cdot [X \ot A' \to B' \to Y]
= [X \ot (A \times_Y B') \sqcup (A' \times_Y B)
\to B \times_Y B' \to Y].
\]
If the cardinality of every fiber of
$A \to B$ is $n$, then so is that of every fiber of
$A \times_Y B' \to B \times_Y B'$, as this value
is preserved by pullbacks;
the same holds for $A' \to B'$. Hence every fiber of the
map
$(A \times_Y B') \sqcup (A' \times_Y B)
\to B \times_Y B'$
has cardinality $n + m$, and the fact that the associated
subgroup is $K \cap K'$ follows from
Lemma \ref{lem:orbit_decomposition}.
\end{proof}

\begin{rem}
Note that the elements of numerical degree $0$ form a subring
which we temporarily
denote by $\A_0[G/H](G/L)$. This ring is generated
by irreducible polynomials of the form
\[
G/H \ot \varnothing \to G/K \to G/L,
\]
so we can identify the $\Z$-basis with the poset $\Oh_L$ of
subgroups $K \leq L$.

In particular, $\A_0[G/H](G/L)$
has finite $\Z$-rank.
Note moreover that since polynomials of the form
\[
G/H \ot \varnothing \to X \to G/L
\]
where $X \to G/L$ is an equivariant map of finite $G$-sets
are in bijection with polynomials $\varnothing \to G/L$,
we have an isomorphism between $\A_0[G/H](G/L)$ and
$\A(G/L)$, where $\A$ is the Burnside ring
$\A = \A[\varnothing]$.
In other words, the map
$\bispan_G(\varnothing, -) \hookrightarrow
\bispan_G(G/H, -)$ obtained via precomposition
by the unique morphism $G/H \to \varnothing$ in
$\bispan_G$ induces a morphism
$\A[\varnothing] \to \A[G/H]$ that is injective and
has as its levelwise
image the elements of numerical degree $0$.
\end{rem}



\begin{rem}
The homogeneous
elements of degree $(-, L)$ also form a subring of
$\A[G/H](G/L)$, which we denote by $\A^0[G/H](G/L)$. This ring is
generated by irreducible polynomials of the form
\[
G/H \ot \coprod_i G/H_i \to G/L \to G/L.
\]
In particular,
the homogeneous
elements of degree $(-, L)$ form an $\N$-graded subring, since
\[
(n, L) + (m, L) = (n + m, L)
\]
in $\Oh_L^w$. It is easy to describe the multiplication of
such irreducible polynomials: we have
\[
((H_i, f_i)_{i \in I})_L \cdot ((H_j', g_j)_{j \in J})_L
= ((H_i, f_i)_{i \in I}, (H_j', g_j)_{j \in J})_L.
\]
In particular, we see that $\A^0(G/L)$ is a
finite-type $\Z$-algebra.
The equivalence relation in Lemma \ref{lem:equivalence} is also
simple; the set of representatives
for the equivalence is given by orbits of collections
$((H_1, f_1), \ldots, (H_n, f_n))_L$ under the $L^n$-action
\[
((H_1, f_1), \ldots, (H_n, f_n))_L \mapsto
((H_1, \ell_1 f_1), \ldots, (H_n, \ell_n f_n)),
\]
so each of the $f_i$ is a well-defined element of
$G/HL$. While $\A^0$ is preserved by restriction and norm maps,
it is not preserved by transfer, so it is not a sub-Tambara
functor.
\end{rem}

\begin{ex}
For $G = C_p$, we have
$\A_0[C_p/C_p](C_p/C_p) \cong \Z[t]/(t^2 - pt)$,
where here $t$ is the element corresponding to the
polynomial
\[
C_p/C_p \ot \varnothing \to C_p/e \to C_p/C_p
\]
while $\A_0[C_p/C_p](C_p/e) \cong \Z$.
On the other hand, we have
$\A^0[C_p/C_p](C_p/e) = \Z[x]$, where here $x$ is the element
corresponding to the polynomial
\[
C_p/C_p \ot C_p/e \to C_p/e \to C_p/e.
\]
$\A^0[C_p/C_p](C_p/C_p) = \Z[x]$, where $x$ is the element
corresponding to the polynomial
\[
C_p/C_p \ot C_p/C_p \to C_p/C_p \to C_p/C_p
\]
\end{ex}

\begin{lem}
The Weyl group action on $\A[G/H](G/L)$ preserves degrees.
\end{lem}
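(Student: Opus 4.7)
The plan is to trace through the definition of the Weyl action on the representable $\A[G/H] = \bispan_G(G/H,-)$ and observe that it acts on polynomials by post-composition with a $G$-set automorphism of $G/L$, leaving untouched the data that determines the degree.

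First, I will recall that the action of $W_L = N_G(L)/L$ on $\A[G/H](G/L)$ is by definition the collection of maps $c_g$ for $g \in N_G(L)$. By construction, each $c_g$ is induced by the bispan $C_g = T_{f_g}$, where $f_g \colon G/L \to G/gLg^{-1} = G/L$ is the $G$-set automorphism $xL \mapsto xLg^{-1}$ (the equality $gLg^{-1} = L$ holding precisely because $g$ normalizes $L$).

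Next, I will apply Lemma \ref{lem:transfer_and_restriction} to compute this action on a representative irreducible polynomial. Since the action on the representable is post-composition, the formula for $T_{f_g}$ gives
\[
c_g\bigl([G/H \ot A \to B \xrightarrow{r} G/L]\bigr) = [G/H \ot A \to B \xrightarrow{f_g \circ r} G/L].
\]
In particular, the span $G/H \ot A \to B$ is untouched, and $B$ itself is the same $G$-set; only the final map $r$ is twisted by $f_g$. The result is still irreducible, since $B$ remains transitive.

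Finally, I will conclude by appealing to the definition of degree. In the general case the degree is just the degree of $A \to B$, which is preserved. In the Dedekind case, writing $B = G/K$, the degree is $(\sum_i |K|/|H_i|, K) \in \Oh_L^w$; both the numerical part (determined by $A \to B$) and the subgroup part $K$ (determined by $B$ up to isomorphism) are unchanged by $c_g$. No genuine obstacle arises here: the whole statement is a direct consequence of the fact that the Weyl action is by post-composition with an isomorphism of $G/L$, which by construction cannot alter the span $A \to B$ recording the degree.
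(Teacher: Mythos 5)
Your proof is correct and is exactly the argument the paper leaves implicit (its proof is the single word ``Obvious''): the Weyl action is post-composition with $T_{f_g}$ for the automorphism $f_g$ of $G/L$, which by Lemma \ref{lem:transfer_and_restriction} leaves the span $A \to B$ and the $G$-set $B$ unchanged, hence preserves both components of the degree.
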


\begin{proof}
Obvious.
\end{proof}

\begin{lem}\label{lem:tr_res_grading}
Let $L \leq L'$.
$\Tr: \A[G/H](G/L) \to \A[G/H](G/L')$ and
$\Res: \A[G/H](G/L') \to \A[G/H](G/L)$
are graded homomorphisms,
in the sense that there are monoid
homomorphisms $\phi: \Oh_L^w \to \Oh_{L'}^w$ and
$\psi: \Oh_{L'}^w \to \Oh_L^w$ such that
\[
\Tr\left( \A[G/H](G/L)_d \right) \subseteq
\A[G/H](G/L)_{\phi(d)}, \quad
\Res\left( \A[G/H](G/L')_{d'} \right) \subseteq
\A[G/H](G/L)_{\psi(d')}
\]
for any $d \in \Oh_L^w$, $d' \in \Oh_{L'}^w$. Explicitly,
\[
\phi(n, K) = (n, K),
\quad
\psi(n, K) = \left(n
, K \cap L\right).
\]
\end{lem}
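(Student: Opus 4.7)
The plan is to exploit additivity of $\Tr$ and $\Res$ and check the grading claim directly on the $\Z$-basis of irreducible polynomials, invoking Lemma \ref{lem:transfer_and_restriction} in both cases. As a preliminary step, I would record that $\phi$ and $\psi$ are monoid homomorphisms: since intersection with $L$ commutes with intersection of subgroups,
\[
\psi(n, K) + \psi(m, K') = (n+m,\, K \cap K' \cap L) = \psi\bigl((n,K)+(m,K')\bigr),
\]
and the analogous identity for $\phi$ is immediate, as $\phi$ is the inclusion $\Oh_L^w \hookrightarrow \Oh_{L'}^w$ on the underlying sets.

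For transfer along the projection $\pr: G/L \to G/L'$, Lemma \ref{lem:transfer_and_restriction} tells us that an irreducible $b = [G/H \ot A \to G/K \to G/L]$ of degree $(n,K)$ is sent to $[G/H \ot A \to G/K \to G/L']$, i.e.\ only the right-most map is replaced by its post-composition with $\pr$. The map $A \to G/K$ is unchanged, and $G/K$ is still transitive with stabilizer $K$ (now regarded inside $L'$), so the image is again irreducible and still has degree $(n,K) = \phi(n,K)$.

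For restriction along the same $\pr$, Lemma \ref{lem:transfer_and_restriction} gives
\[
\Res(b) = \bigl[G/H \ot A \times_{G/L'} G/L \to G/K \times_{G/L'} G/L \to G/L \bigr].
\]
The key step is to decompose the $B$-component into transitive summands. For Dedekind $G$, normality of $K, L \leq L'$ yields the orbit decomposition
\[
G/K \times_{G/L'} G/L \cong \coprod_{L'/KL} G/(K \cap L),
\]
which I would derive from the usual Mackey-style double-coset calculation (with all double cosets collapsing to single cosets of $KL$). Splitting $A \times_{G/L'} G/L$ compatibly decomposes $\Res(b)$ into a sum of irreducibles, each with $B$-component $G/(K \cap L)$. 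Because pullback preserves the degree of a map of $G$-sets, each summand has numerical degree $n$, hence total degree $(n, K \cap L) = \psi(n,K)$.

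The only mildly delicate point is the orbit decomposition of the pullback above, but this reduces to a routine Mackey-type computation once normality of $K$ and $L$ is used. After that, the argument is a direct read-off from Lemma \ref{lem:transfer_and_restriction} and the observation that pullbacks of finite $G$-set maps preserve degrees.
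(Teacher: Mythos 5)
Your proposal is correct and follows essentially the same route as the paper's proof: verify that $\phi,\psi$ are monoid homomorphisms, then read off the grading claims from the explicit descriptions of transfer and restriction in Lemma \ref{lem:transfer_and_restriction}, using that pullback preserves degrees and that $G/K \times_{G/L'} G/L \cong \coprod G/(K\cap L)$. The extra detail you supply (the indexing of the orbit decomposition over $L'/KL$ for Dedekind $G$) is a fine elaboration of what the paper leaves implicit.
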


\begin{proof}
Observe that $\phi, \psi$ indeed define monoid
homomorphisms.
The fact that $\Res$ and $\Tr$ are graded in this way
more or less follows immediately from the explicit description
of transfer and restriction in Lemma
\ref{lem:transfer_and_restriction},
the fact that pullback preserves degrees,
and the fact that $G/K \times_{G/L'} G/L
= \coprod_{i=1}^{[L'/KL]} G/(K \cap L)$.
\end{proof}



We summarize these observations in the following,
more general lemma.

\begin{lem}
Let $\mathcal{F}$ be a collection of subgroups of $L$ and let
$\Oh_{L, \mathcal{F}}^w$ be the subset of $\Oh_L^w$ consisting of
all pairs of the form
$(n, K)$
where $K \in \mathcal{F}$.
\begin{enumerate}
\item If $L \in \mathcal{F}$ and $\mathcal{F}$ is closed under
intersections, then the homogeneous elements of degree in
$\Oh_{L, \mathcal{F}}^w$ form a subring of $\A[G/H](G/L)$.

\item If for every $K \in \mathcal{F}$ and $K' \leq L$,
$K \cap K' \in \mathcal{F}$, then the homogeneous elements of degree in
$\Oh_{L, \mathcal{F}}^w$ form an ideal of $\A[G/H](G/L)$. In particular,
any family of subgroups $\mathcal{F}$ which is ``downwards closed''
in the sense that $K' \in \mathcal{F}$ whenever $K' \leq K$
and $K \in \mathcal{F}$ defines an ideal of $\A[G/H](G/L)$.

\item For $L \leq L'$, the image of transfer
$\Tr: \A[G/H](G/L) \to \A[G/H](G/L')$ is the ideal corresponding to the
family $\mathcal{F} = \{K \leq L\}$.
\end{enumerate}
\end{lem}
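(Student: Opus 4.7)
The plan is to leverage the multiplicative degree formula established in the preceding lemma, together with the explicit description of transfer and restriction from Lemma \ref{lem:tr_res_grading}. All three parts reduce to bookkeeping on the degree monoid $\Oh_L^w$, so I do not expect serious obstacles; the main point is simply to verify that the hypotheses on $\mathcal{F}$ match exactly what the degree formulas require.

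For (1), I would first observe that the multiplicative identity of $\A[G/H](G/L)$ is represented by the irreducible polynomial $[G/H \ot \varnothing \to G/L \to G/L]$, which has degree $(0, L)$; hence the hypothesis $L \in \mathcal{F}$ ensures that $1$ lies in the candidate subring. Closure under addition is automatic from the $\Oh_L^w$-grading. Closure under multiplication follows from the previous lemma: the product of homogeneous elements of degrees $(n, K)$ and $(m, K')$ has degree $(n + m, K \cap K')$, and the hypothesis that $\mathcal{F}$ is closed under intersections then keeps us inside $\Oh_{L, \mathcal{F}}^w$.

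For (2), the summation closure is again immediate. For the absorption property, multiplying an element of degree $(n, K)$ with $K \in \mathcal{F}$ by an arbitrary element of degree $(m, K')$ (with $K' \leq L$) yields an element of degree $(n+m, K \cap K')$, which lies in the candidate precisely by the stated hypothesis. The ``downwards closed'' special case is then the observation that $K \cap K' \leq K$ forces $K \cap K' \in \mathcal{F}$ whenever $\mathcal{F}$ is downward closed.

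For (3), I would use Lemma \ref{lem:tr_res_grading} with $\phi(n, K) = (n, K)$ to see that the image of $\Tr$ is contained in the span of homogeneous pieces of degree $(n, K)$ with $K \leq L$, which is exactly the ideal corresponding to $\mathcal{F} = \{K \leq L\}$ from part (2). For the reverse inclusion, given an irreducible polynomial $[G/H \ot \coprod_i G/H_i \to G/K \to G/L']$ of degree $(n, K)$ with $K \leq L$, the structural map $G/K \to G/L'$ factors as $G/K \to G/L \to G/L'$, so Lemma \ref{lem:transfer_and_restriction} exhibits it as the transfer along $G/L \to G/L'$ of the corresponding irreducible polynomial in $\A[G/H](G/L)$. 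This proves surjectivity onto the prescribed ideal, and its ideal status can be re-derived either from part (2) or from Frobenius reciprocity.
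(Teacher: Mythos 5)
Your proposal is correct and fills in exactly the argument the paper has in mind: the paper's proof is simply ``Obvious,'' and your verification via the degree-of-product formula, the hypothesis on $\mathcal{F}$, and Lemma \ref{lem:tr_res_grading} (plus the factorization $G/K \to G/L \to G/L'$ for the reverse inclusion in (3)) is the natural way to make that explicit.
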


\begin{proof}
Obvious.
\end{proof}

\begin{rem}
The subring $\A^0[G/H](G/L)$ can be identified with the quotient
of $\A[G/H](G/L)$ by the ideal corresponding to the family
$\mathcal{F} = \{K \lneq L\}$.
\end{rem}

\begin{rem}
$\Nm_L^{L'}: \A[G/H](G/L) \to \A[G/H](G/L')$
is not generally graded.
\end{rem}

\begin{rem}\label{rem:naive}
None of the results on the grading of
$\A[G/H](G/L)$ depend on the transitivity
of $X = G/H$. In particular, for any finite $G$-set $X$, the
ring $\A[X](G/L)$ is graded over $\Oh_L^w$ in the way described:
an irreducible polynomial
\[
X \ot \coprod_i G/H_i \to G/K \to G/L
\]
has degree $( \sum_i \abs{K}/\abs{H_i}, K)$, which turns
$\A[X](G/L)$ into a graded ring for each $L \leq G$.
Restriction and transfer maps for $\A[X]$ are also graded
in the same way. For now, we call this the \textit{naive grading}
on $\A[X]$.
In \S\ref{sec:boxproduct}, we will show
that the gradings on $\A[G/H]$ for transitive $G/H$ induce a grading
on $\A[X]$ for arbitrary $G$-sets $X$, which coincides with the
naive grading.
\end{rem}

\section{Finite Generation for Dedekind Groups}
\label{sec:finite_generation}

\subsection{Finiteness Results for
General \texorpdfstring{$G$}{G}}

The main goal of this section is to prove that the
polynomial Tambara functors $\A[G/H]$ are levelwise finitely
generated for Dedekind $G$ and relatively
finite-dimensional for general $G$.
First, some definitions.

\begin{defn}
Let $T$ be a $G$-Tambara functor and let $R$ be a $T$-algebra,
i.e. $R$ is a $G$-Tambara functor equipped with a morphism
$f: T \to R$. We say that $R$ is \textit{levelwise finitely
generated over $T$} if each $R(G/H)$ is a finite-type
$T(G/H)$-algebra. When $T = \A = \A[\varnothing]$ is the
Burnside Tambara functor, we simply say that $R$ is
\textit{levelwise finitely generated.}
\end{defn}

\begin{rem}
Since $\A$ is levelwise a finite $\Z$-module,
$R$ is levelwise finitely generated iff each $R(G/H)$ is a
finite-type $\Z$-algebra.
\end{rem}

\begin{defn}
[{\cite[Definition 3.30]{chanwisdom2025algebraicktheory}}]
A Green or Tambara functor $R$ is \textit{relatively
finite-dimensional} if for all $H \leq K \leq G$, the
restriction map $\Res_H^K: R(G/K) \to R(G/H)$
is a finite ring map. Equivalently,
$\Res_H^G: R(G/G) \to R(G/H)$ is a finite ring map for all
$H \leq G$.
\end{defn}

Relative finite-dimensionality is a well-behaved
finiteness condition on Green and Tambara functors
which imposes many niceties on their category of modules
(see \cite[\S{3}]{chanwisdom2025algebraicktheory}
for more details);
we will use it extensively in \S\ref{sec:boxproduct}.
Next, we make some remarks on finite
generation for general finite groups $G$.

\begin{thm}[{\cite[Theorem A]{BRUN2005233}}]
\label{thm:brun}
Let $G$ be a finite group, $X$ a finite $G$-set. Then
$\A[X](G/e)$ is a free polynomial ring over $\Z$
on $\abs{X}$ generators.
\end{thm}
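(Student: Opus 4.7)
The plan is to produce a $\Z$-basis of $\A[X](G/e)$ indexed by finite multi-sets of elements of the underlying set of $X$, and to verify that multiplication on this basis agrees with monomial multiplication in $\Z[t_x : x \in X]$.

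First, I would classify the isomorphism classes of irreducible polynomials $[X \ot A \xrightarrow{q} B \xrightarrow{r} G/e]$ contributing to $\A[X](G/e)$. Since $B$ is transitive and admits a $G$-map to $G/e$, necessarily $B \cong G$ as a free orbit; since $A \xrightarrow{q} G$ is a $G$-map and any $G$-map $G/K \to G$ forces $K = e$, every orbit of $A$ is also free, so $A \cong \coprod_{j \in S} G$ for some finite set $S$. Using a $G$-automorphism of $B$ and of each factor of $A$, one can always normalize so that $r = \id_G$ and $q|_{G_j} = \id_G$ for each $j \in S$. Under this normalization, the restriction of the map $A \to X$ to $G_j$ is a $G$-map $G \to X$ determined by the element $x_j \in X$ (its value at the identity), and the only remaining isomorphism freedom is permutations of the index set $S$. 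Hence irreducible polynomials biject with finite multi-sets $\{x_j\}_{j \in S} \subseteq X$.

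Next, I would verify that the semiring multiplication matches multi-set concatenation. Applying the product formula
\[
[X \ot A \to B \to G/e] \cdot [X \ot A' \to B' \to G/e] = [X \ot (A \times_{G/e} B') \sqcup (A' \times_{G/e} B) \to B \times_{G/e} B' \to G/e]
\]
together with the pullback identity $G \times_{G/e} G \cong G$, the product has new $B$ equal to $G$ and new $A$ equal to $(\coprod_{j \in S} G) \sqcup (\coprod_{k \in T} G)$, with maps to $X$ inherited from the original factors. Thus the product corresponds to the multi-set $\{x_j\} \sqcup \{x'_k\}$, which is exactly concatenation. Identifying each multi-set of size $n$ in $X$ with a monomial of total degree $n$ in the variables $\{t_x\}_{x \in X}$ then gives $\A[X](G/e) \cong \Z[t_x : x \in X]$, a polynomial ring on $\abs{X}$ generators.

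The hard part will be the careful normalization in the classification step, analogous to Lemma~\ref{lem:equivalence}: the $G$-automorphism of $B \cong G$ preserving $r$ is forced to be the identity (since $r$ is already the identity map $G \to G/e$), and this in turn pins down the compatible automorphism of $A$, leaving only permutations of $S$. Getting this rigidification exactly right is the essential combinatorial content, ensuring neither over-identification nor under-identification of multi-sets; everything else follows mechanically from the product and pullback formulas already set up in the paper.
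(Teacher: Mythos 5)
Your argument is correct, but note that the paper does not prove this statement at all: it is imported verbatim as \cite[Theorem A]{BRUN2005233}, so there is no internal proof to compare against. Your direct combinatorial verification is a legitimate self-contained alternative to citing Brun, and it is entirely in the spirit of the toolkit the paper itself builds for higher levels (the classification of irreducible bispans, the rigidification argument of Lemma~\ref{lem:equivalence}, and the explicit product formula). The key observations all check out: a transitive $B$ admitting a $G$-map to $G/e$ must be free, hence so is every orbit of $A$; after normalizing $r=\id_G$ and $q|_{G_j}=\id_G$, the only surviving isomorphisms are the identity on $B$ (forced by $r's=r$) together with permutations of the components of $A$ matching the labels $x_j\in X$, so irreducibles biject with multisets in $X$; and since $G\times_{G/e}G\cong G$ via the diagonal, the product formula reduces to disjoint union of the $A$'s with their labels, i.e.\ multiset concatenation, giving the monomial basis of $\Z[t_x:x\in X]$ with $[X\ot\varnothing\to G/e\to G/e]$ as the unit. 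Brun's own proof is more structural (it sits inside his identification of free Tambara functors with Witt-vector-type constructions), so what your approach buys is an elementary, purely bispan-theoretic proof of exactly the level-$G/e$ statement; what it loses is the extra structural information Brun's theorem carries.
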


In particular, $\A[X](G/e)$ is a finite-type $\Z$-algebra.
We also cite a fact about Tambara functors we will use
repeatedly.

\begin{lem}[{\cite[Proposition 5.1]{Tambara01011993}}; also see
{\cite[Lemma 3.3]{schuchardt2025algebraicallyclosedfieldsequivariant}}]
\label{lem:integral}
Let $G$ be a finite group and $T$
a $G$-Tambara functor. For all $H \leq K$, restriction
$\Res_H^K: T(G/K) \to T(G/H)$ is an integral ring map.
\end{lem}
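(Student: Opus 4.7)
The plan is to adapt the classical integrality argument for $G$-invariants of a $G$-ring---every $x \in R$ satisfies the monic polynomial $\prod_g(t - g \cdot x) \in R^G[t]$---by replacing the orbit product with the multiplicative norm $\Nm_H^K$ supplied by the Tambara structure.

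Fix $x \in T(G/H)$. First, I would formally introduce a polynomial variable by enlarging $T$ to a Tambara functor $T[t]$ with $T[t](G/L) = T(G/L)[t]$, all Tambara operations extended coefficient-wise so that $t$ behaves as a $G$-invariant element (in particular $\Res_H^K(t) = t$, $\Nm_H^K(t) = t^{[K:H]}$, and $t$ is fixed by every $c_g$); this can be realized rigorously as the box product of $T$ with a constant Tambara functor with ring $\Z[t]$. Then $t - x \in T[t](G/H)$ is well-defined, and I set
\[
p_x(t) := \Nm_H^K(t - x) \in T[t](G/K) = T(G/K)[t].
\]
Expanding $\Nm_H^K$ on the sum $t + (-x)$ via Tambara's exponential formula and collecting by powers of $t$ shows $p_x(t)$ is monic of degree $[K:H]$, with leading term $\Nm_H^K(t) = t^{[K:H]}$ and remaining coefficients in $T(G/K)$.

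Next, I would apply $\Res_H^K$ coefficient-wise and evaluate at $t = x$. The exponential/double coset formula for $\Res_H^K \Nm_H^K$ decomposes this composite along $G/H \to G/K$ into a product indexed by double cosets $H \backslash K / H$: the trivial double coset contributes the factor $(t - x)$, and each other double coset $HgH$ contributes a factor of the form $\Nm_{H \cap gHg^{-1}}^H c_g(t - x)$. Consequently
\[
\Res_H^K(p_x(t)) = (t - x) \cdot q(t) \quad \text{in } T(G/H)[t]
\]
for some $q(t)$. Specializing $t = x$ makes the first factor vanish, so $\Res_H^K(p_x)(x) = 0$ in $T(G/H)$. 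This exhibits $x$ as integral over $\Res_H^K(T(G/K))$ of degree at most $[K:H]$, proving the lemma.

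The main obstacle is the careful application of the exponential formula, both in verifying monicity and in isolating the $(t - x)$ factor after restriction. Morally the whole computation is just the classical characteristic polynomial $\prod_\sigma(t - \sigma x)$, but for non-normal $H \leq K$ one needs detailed bookkeeping with the norm analogue of the Mackey double coset formula to confirm that the identity double coset genuinely contributes a factor of $(t - x)$ rather than a conjugate, and to track how the remaining factors assemble into a well-defined polynomial $q(t) \in T(G/H)[t]$. A secondary technical point is checking that the ``extend $t$ as a $G$-invariant'' construction of $T[t]$ is a genuine Tambara functor, for which it is convenient to invoke the universal property of the box product rather than verify the exponential axiom directly.
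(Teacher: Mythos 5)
First, note that the paper does not prove this lemma itself --- it is quoted verbatim from \cite{schuchardt2025algebraicallyclosedfieldsequivariant}, so there is no internal proof to compare against. Your strategy (expand $\Nm_H^K(t-x)$ as a monic ``characteristic polynomial'' of degree $[K:H]$, use the multiplicative double coset formula to show that $\Res_H^K\Nm_H^K(t-x)$ has $(t-x)$ as the factor coming from the identity double coset, then evaluate at $t=x$) is the standard argument for this fact, and the steps you outline --- monicity from the leading term $\Nm_H^K(t)=t^{[K:H]}$ with all mixed Tambara-reciprocity terms contributing lower powers of $t$ via Frobenius reciprocity, and the factorization $\Res_H^K(p_x)=(t-x)q(t)$ --- are correct as stated.

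The genuine gap is your proposed construction of $T[t]$. The box product of $T$ with the constant (fixed-point) Tambara functor $\underline{\Z[t]}$ is \emph{not} levelwise $T(G/L)[t]$: since restrictions in $\underline{\Z[t]}$ are identities and transfers are multiplication by the index, the Frobenius relations in the box product formula (Theorem \ref{thm:boxproduct}) identify $a\otimes [K{:}H]\,t^n \in S_K^L$ with $\Res_H^K(a)\otimes t^n\in S_H^L$, which after pushing everything into $S_L^L$ forces relations such as $[L{:}H]\,(a\otimes t^n)=\Tr_H^L\Res_H^L(a)\otimes t^n$ that do not hold in $T(G/L)[t]$. (A sanity check: the unit for $\boxtimes$ is the Burnside functor $\A$, not $\underline{\Z}$, so $\A\boxtimes\underline{\Z[t]}\cong\underline{\Z[t]}$, whose top level is $\Z[t]$ rather than $\A(G/G)[t]$.) If the levels of your auxiliary object are only quotients of $T(G/L)[t]$, the final evaluation at $t=x$ lands in a quotient of $T(G/H)$ and the integral dependence relation is not established in $T(G/H)$ itself. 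The repair is either to construct the levelwise polynomial extension with invariant variable directly --- defining $\Nm_H^K$ on sums of monomials via the Tambara reciprocity (norm-of-a-sum) formula and verifying the exponential axiom, which is exactly the ``careful bookkeeping'' you deferred --- or to avoid $T[t]$ altogether by extracting the coefficients $e_0,\dots,e_{[K:H]-1}\in T(G/K)$ of the characteristic polynomial directly from the Tambara reciprocity expansion of $\Nm_H^K(y-x)$ and verifying the resulting identity $\sum_i \Res_H^K(e_i)\,x^i + x^{[K:H]}=0$ by the same double coset computation. With either repair the remainder of your argument goes through.
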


\begin{defn}
Let $G$ be a finite group, $H, L \leq G$. Let $\A^0[G/H](G/L)$
denote the subring of $\A[G/H](G/L)$ generated by those irreducible
polynomials
\[
G/H \ot A \to B \to G/L
\]
such that $B \to G/L$ is an isomorphism.
\end{defn}

This is indeed a subring, since if $B \to G/L$ and $B' \to G/L$
are isomorphisms, then so is $B \times_{G/L} B' \to G/L$.
Note that for Dedekind $G$, this coincides with our previous
definition of $\A^0$.

\begin{lem}\label{lem:A0_finite}
$\A^0[G/H](G/L)$ is a finitely generated $\Z$-algebra.
\end{lem}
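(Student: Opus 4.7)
The plan is to exhibit $\A^0[G/H](G/L)$ explicitly as a polynomial ring over $\Z$ on a finite set of generators, by carefully analyzing the generating irreducible polynomials and the multiplication rule restricted to $\A^0$.

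First I identify the generators. An irreducible polynomial $[G/H \ot A \to B \to G/L]$ with $B \to G/L$ an isomorphism is, up to isomorphism of polynomials, of the form $[G/H \ot A \to G/L \xrightarrow{\id} G/L]$. Decomposing $A$ into its transitive orbits and using that every element of $\A[G/H](G/L)$ has a unique expression as a $\Z$-sum of irreducibles, each element of $\A^0[G/H](G/L)$ is a $\Z$-linear combination of \emph{atomic} polynomials
\[
\alpha_{K, f, g} = [G/H \xleftarrow{f} G/K \xrightarrow{g} G/L \xrightarrow{\id} G/L],
\]
with $G/K$ transitive, necessarily forcing $K$ to be subconjugate to both $H$ and $L$ so the maps $f$ and $g$ exist.

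Next I compute products of atoms using the formula
\[
[X \ot A \to B \to Y] \cdot [X \ot A' \to B' \to Y] = [X \ot (A \times_Y B') \sqcup (A' \times_Y B) \to B \times_Y B' \to Y].
\]
With $B = B' = Y = G/L$ and both middle maps equal to the identity, the relevant pullbacks reduce: $A \times_{G/L} G/L = A$, $A' \times_{G/L} G/L = A'$, and $G/L \times_{G/L} G/L = G/L$. Thus the product of two atoms is simply the irreducible polynomial obtained by taking the disjoint union of their middle terms while keeping the maps to $G/H$ and to $G/L$. Iterating, an arbitrary product of atoms $\alpha_{K_1, f_1, g_1} \cdots \alpha_{K_n, f_n, g_n}$ is the irreducible polynomial $[G/H \ot \coprod_i G/K_i \to G/L \xrightarrow{\id} G/L]$, whose isomorphism class is determined by the multiset of constituent atoms. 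Consequently, monomials in the atoms form a $\Z$-basis, and $\A^0[G/H](G/L)$ is the free polynomial ring $\Z[\alpha_{K, f, g}]$ on the set of isomorphism classes of atoms.

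It remains to observe that the set of atoms is finite. There are finitely many subgroups $K \leq G$, and for each such $K$ the equivariant maps $G/K \to G/L$ and $G/K \to G/H$ are in bijection with the fixed-point sets $(G/L)^K$ and $(G/H)^K$, both of which are finite. Hence there are finitely many triples $(K, f, g)$, let alone isomorphism classes, and $\A^0[G/H](G/L)$ is a finitely generated $\Z$-algebra. The only subtle point is that distinct triples $(K, f, g)$ can represent the same atom once one allows for automorphisms of the middle and right terms (as happens already for $G = C_p$ in the example above), but this only decreases the generator count and does not affect finite generation.
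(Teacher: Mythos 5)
Your proof is correct and follows essentially the same route as the paper's: normalize the representative of an irreducible polynomial with $B \to G/L$ an isomorphism, observe that the multiplication formula degenerates to disjoint union of the middle terms, and conclude that the single-orbit polynomials form a finite generating set. You go slightly further in noting that the monomials in (isomorphism classes of) these atoms form a $\Z$-basis, so that $\A^0[G/H](G/L)$ is in fact a polynomial ring, which is consistent with the paper's examples and is a harmless strengthening.
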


\begin{proof}
An irreducible polynomial in $\A^0[G/H](G/L)$ is of the form
\[
G/H \ot \coprod_i G/H_i \to G/g^{-1}Lg \to G/L.
\]
By replacing the $G/H_i$ with isomorphic $G$-sets, we may
assume that all $H_i \leq g^{-1}Lg$, so that it has a representative
of the form
\[
G/H \xleftarrow{\sqcup_i f_i} \coprod_i G/H_i
\to G/g^{-1}Lg \xrightarrow{g^{-1}} G/L,
\]
where $f_i: G/H_i \to G/H$ is the unique $G$-map sending
$H_i \in G/H_i$ to $f_i H \in G/H$ (here $H_i \leq
f_i H f_i^{-1}$)
and each $G/H_i \to G/g^{-1}Lg$
is the natural projection.
Multiplication of two such polynomials gives
\begin{align*}
&[G/H \xleftarrow{\sqcup_i f_i} \coprod_i G/H_i
\to G/g^{-1}Lg \xrightarrow{g^{-1}} G/L]\\
&\quad
\cdot
[G/H \xleftarrow{\sqcup_j g_j} \coprod_j G/H_j'
\to G/g^{-1}Lg \xrightarrow{g^{-1}} G/L] \\
&=[G/H \xleftarrow{\sqcup_i f_i \sqcup_j g_j}
\coprod_{i, j} G/H_i \sqcup G/H_j'
\to G/g^{-1}Lg \xrightarrow{g^{-1}} G/L],
\end{align*}
where we are using the fact that
$G/g^{-1}Lg \times_{G/L} G/g^{-1}Lg = G/g^{-1}Lg$
and
$G/H_i \times_{G/L} G/g^{-1}Lg
= G/H_i \times_{G/g^{-1}Lg} G/g^{-1}Lg = G/H_i$. Hence,
we can take as a set of generators all irreducible
polynomials of the form
\[
G/H \ot G/H' \to G/g^{-1}Lg \to G/L
\]
for varying $g \in G$ and $H' \leq g^{-1}Lg$.
\end{proof}

\begin{lem}\label{lem:relatively_finite}
$\A[G/H]$ is relatively finite-dimensional.
\end{lem}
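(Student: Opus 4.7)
By Lemma \ref{lem:integral}, every restriction map in a Tambara functor is integral, so proving finiteness of a restriction $\Res_{H_0}^K: \A[G/H](G/K) \to \A[G/H](G/H_0)$ reduces to showing it is finite-type as an algebra extension. Since finite-type maps compose and by the equivalent characterization in the definition of relative finite-dimensionality, I need only verify that $\Res_L^G: \A[G/H](G/G) \to \A[G/H](G/L)$ is finite-type for each $L \leq G$; write $S_L := \Res_L^G(\A[G/H](G/G))$ for its image.

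Fix $L$. My plan is to induct on the subgroup lattice of $L$, using the finite-type structure from Lemma \ref{lem:A0_finite} as the backbone. The ``top'' irreducible polynomials in $\A[G/H](G/L)$---those whose middle object is $G/L$---lie in the subring $\A^0[G/H](G/L)$, and Lemma \ref{lem:A0_finite} furnishes a finite $\Z$-generating set, hence a finite $S_L$-generating set. For an irreducible polynomial $b = [G/H \leftarrow A \to G/K \to G/L]$ with $K \lneq L$, I employ a \emph{companion polynomial} trick: set $b^\sharp := [G/H \leftarrow A \to G/K \to G/G] \in \A[G/H](G/G)$. Applying Lemma \ref{lem:transfer_and_restriction}, $\Res_L^G(b^\sharp)$ decomposes into a sum of irreducible polynomials indexed by the double cosets in $K \backslash G / L$: the identity double coset recovers $b$ exactly, while every other double coset $g$ contributes a polynomial whose middle object is $G/(K \cap gLg^{-1})$---a proper subconjugate of $K$ whenever $K \not\leq gLg^{-1}$. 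Thus, modulo $S_L$, every $b$ with $K \lneq L$ equals a combination of irreducibles whose middle subgroup is strictly smaller than $K$ (or a Weyl-conjugate of $K$), and iteration through the finite subgroup lattice of $L$ terminates.

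I expect the main obstacle to be controlling two phenomena simultaneously. First, the correction terms in the companion decomposition typically have an $A$-component \emph{larger} than that of $b$ (the pullback enlarges $A$), so one cannot induct naively on $|A|$; instead, the induction must be ordered by the middle subgroup, with Lemma \ref{lem:A0_finite} used to absorb all variation in $A$ at each fixed middle subgroup. Second, when $g \in K \backslash G / L$ satisfies $K \leq gLg^{-1}$ (the ``Weyl-like'' case), the correction has middle subgroup equal to $K$ itself---a Weyl-conjugate of $b$---so the companion trick produces only a Weyl-symmetrized combination rather than $b$ individually. Handling this requires combining companions of the Weyl-translates $\{c_g b\}$ to isolate $b$ from its orbit average, or equivalently passing to the quotient where Weyl symmetry has already been accounted for, before descending to strictly smaller middle subgroups.
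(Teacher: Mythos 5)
Your reduction to finite type via Lemma \ref{lem:integral} and your treatment of the top layer via $\A^0$ and Lemma \ref{lem:A0_finite} match the paper, but your handling of the layers $K \lneq L$ takes a genuinely different route --- descent on the middle subgroup using restrictions from $G/G$ --- and this is where there is a real gap. The companion decomposition
\[
\Res_L^G\bigl([G/H \ot A \to G/K \to G/G]\bigr)
= \sum_{g \in K \backslash G / L} \bigl[G/H \ot A_g \to G/(K \cap gLg^{-1}) \to G/L\bigr]
\]
only strictly decreases the middle subgroup on double cosets with $K \not\leq gLg^{-1}$, and you correctly flag the remaining ``Weyl-like'' terms; but your proposed repair does not work, and the problem is not a boundary case --- it is the whole bottom of the lattice. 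At $K = e$ \emph{every} double coset is Weyl-like ($e \cap gLg^{-1} = e$), so the companion trick yields only the full orbit sum $\sum_{g \in G/L} b_g$ of twisted translates of $b$. Knowing that all such orbit sums lie in $S_L$ gives you no finite set of $T(G/G)$-module generators for the individual translates: ``isolating $b$ from its orbit average'' would require either dividing by $[G:L]$ (unavailable over $\Z$) or an a priori bound on the module generated by the non-invariant part, which is exactly what you are trying to prove. Your descent therefore never terminates, and the argument as written does not establish module-finiteness of the layer of polynomials with middle object $G/e$ (or any layer trapped by Weyl-like cosets).

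The paper closes precisely this hole with a different mechanism: it inducts on $\lvert L \rvert$ rather than on the middle subgroup, uses Theorem \ref{thm:brun} (which you never invoke) to get the base case $T(G/e)$ finite over $T(G/G)$, and observes that every irreducible polynomial not in $\A^0[G/H](G/L)$ is a \emph{transfer} $\Tr_K^L(x)$ with $K \lneq L$; Frobenius reciprocity
\[
\Res_L^G(y)\,\Tr_K^L(x) = \Tr_K^L\bigl(\Res_K^G(y)\,x\bigr)
\]
then exhibits the image of $\Tr_K^L$ as a $T(G/G)$-module quotient of $T(G/K)$, which is a finite module by the inductive hypothesis. If you want to salvage your level-internal descent, you must replace the companion/restriction step for $K \lneq L$ with this transfer argument (or otherwise produce a finite set of module generators for each non-top layer); restriction from $G/G$ alone can only ever produce Weyl-invariant combinations and cannot see the individual translates.
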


\begin{proof}
Write $T = \A[G/H]$ and $T^0(G/L) = \A^0[G/H](G/L)$ for brevity.
We prove that $\Res_L^G$ is a finite ring map by induction
on the size of $L$. Since $T(G/e)$ is a finite-type
$\Z$-algebra by Theorem \ref{thm:brun}, $\Res_e^G$ is a finite-type
ring map, hence a finite ring map since
all restrictions maps are integral (Lemma \ref{lem:integral}).

Now suppose $L \leq G$. It is clear from definition that
every irreducible polynomial in $T(G/L)$ is either in
$T^0(G/L)$ or is of the form $\Tr_K^L(x)$
for some $x \in T(G/K)$, $K \lneq L$.
By Frobenius reciprocity,
$\Res_L^G(y)$ where $y \in T(G/G)$ acts by multiplication on
$\Tr_K^L(x)$ via
\[
\Res_L^G(y) \Tr_K^L(x) = \Tr_K^L(\Res_K^G(y) x),
\]
so it follows that as a $T(G/G)$-module,
$T(G/L)$ is generated by $T^0(G/L)$
and modules isomorphic to quotients of
$T(G/K)$ for $K \lneq L$.
By assumption, the $T(G/K)$ are all
finite $T(G/G)$-modules, so it suffices to show that
the $T(G/G)$-module generated by $T^0(G/L)$ in $T(G/L)$
is contained in some finite $T(G/G)$-module as well.
But $T^0(G/L)$ is a finite-type $\Z$-algebra,
so it generates a finite-type subalgebra over
$T(G/G)$ in $T(G/L)$. Since this subalgebra is integral
over $T(G/G)$ by Lemma \ref{lem:integral} again, it must
be finite over $T(G/G)$, completing the proof.
\end{proof}

\begin{cor}\label{cor:general_reduction}
$\A[G/H]$ is levelwise finitely generated over $\Z$
iff $\A[G/H](G/G)$ is a finite-type $\Z$-algebra.
\end{cor}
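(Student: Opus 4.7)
The plan is to deduce the corollary directly from Lemma \ref{lem:relatively_finite} together with a standard fact from commutative algebra. The forward direction is essentially trivial: if $\A[G/H]$ is levelwise finitely generated over $\Z$, then in particular $\A[G/H](G/G)$ is a finite type $\Z$-algebra, since this is just the statement of levelwise finite generation evaluated at $L = G$.

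For the reverse direction, suppose $\A[G/H](G/G)$ is a finite type $\Z$-algebra. By Lemma \ref{lem:relatively_finite}, $\A[G/H]$ is relatively finite-dimensional, so for any $L \leq G$ the restriction map
\[
\Res_L^G: \A[G/H](G/G) \to \A[G/H](G/L)
\]
is a finite ring map. Hence $\A[G/H](G/L)$ is a finitely generated module over $\A[G/H](G/G)$. In particular, it is finitely generated as an $\A[G/H](G/G)$-algebra, and since $\A[G/H](G/G)$ is itself a finite type $\Z$-algebra by assumption, composing shows $\A[G/H](G/L)$ is a finite type $\Z$-algebra.

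There is really no obstacle here; the corollary is a packaging of Lemma \ref{lem:relatively_finite}, which does the real work by reducing levelwise finite generation to finite generation at the top level $G/G$. The content is that relative finite-dimensionality lets finite generation propagate down the Lewis diagram along restriction maps, so the only nontrivial level one must check is the top one.
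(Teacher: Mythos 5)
Your proof is correct and follows exactly the paper's approach: the forward direction is immediate, and the converse combines Lemma \ref{lem:relatively_finite} (relative finite-dimensionality) with the standard fact that an algebra which is module-finite over a finite type $\Z$-algebra is itself a finite type $\Z$-algebra. No further comment is needed.
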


\begin{proof}
The only if direction is obvious. The converse follows
from the fact that any algebra which is module-finite over
a finite-type $\Z$-algebra is also a finite-type $\Z$-algebra.
\end{proof}

\subsection{Dedekind
\texorpdfstring{$G$}{G},
Transitive Case}

We introduce some auxiliary terms on
levelwise finite generation.

\begin{defn}\label{def:tambara_finite}
Let $G$ be a finite group. We say that $G$ is
\textit{Tambara finite} if $\A[X]$ is levelwise finitely
generated for all finite $G$-sets $X$, and \textit{transitively
Tambara finite} or just \textit{transitively finite} if
$\A[X]$ is levelwise finitely generated for all transitive
finite $G$-sets $X$.
\end{defn}

We will show in \S\ref{sec:boxproduct} that transitively
Tambara finite
groups are Tambara finite.
For the rest
of this section, assume that $G$ is a finite Dedekind group;
our goal now is to show (Theorem \ref{thm:finite_generation})
that $G$ is transitively finite.

\begin{lem}
Let $H \leq H'$. The morphism of Tambara functors
$\A[G/H] \to \A[G/H']$ induced by restriction along the
projection morphism $G/H \to G/H'$
contains in its levelwise image the set of irreducible polynomials
$((H_i, f_i)_{i \in I})_K$ where $H_i \leq H$.
\end{lem}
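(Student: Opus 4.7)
The plan is to apply Yoneda to identify the map $\A[G/H] \to \A[G/H']$ explicitly, then exhibit preimages of the claimed polynomials by lifting coset representatives from $G/H'$ up to $G/H$.

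First, I would use Yoneda on representable Tambara functors: since $\A[X] = \bispan_G(X,-)$, morphisms $\A[G/H] \to \A[G/H']$ correspond bijectively to elements of $\A[G/H'](G/H) = \bispan_G(G/H', G/H)$, and the map induced by restriction along the projection $\pi: G/H \to G/H'$ corresponds to $R_\pi = [G/H' \xleftarrow{\pi} G/H \xrightarrow{1} G/H \xrightarrow{1} G/H]$. Unpacking the bispan composition formula from the preliminaries, I would verify that the induced map at level $G/L$ is precomposition with $R_\pi$ and sends a polynomial $[G/H \xleftarrow{p} A \to B \to G/L]$ to $[G/H' \xleftarrow{\pi \circ p} A \to B \to G/L]$. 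In other words, the middle and rightmost legs are unchanged, while the leftmost leg is postcomposed with $\pi$.

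Second, given a target irreducible polynomial $((H_i, f_i)_{i \in I})_K \in \A[G/H'](G/L)$ with each $H_i \leq H$, I would choose lifts $\tilde{f}_i \in G$ of the cosets $f_i \in G/H'$, so that $\tilde{f}_i H' = f_i$. Since $G$ is Dedekind, $H$ is normal in $G$, so $\tilde{f}_i H \tilde{f}_i^{-1} = H \geq H_i$, which means the tuple $((H_i, \tilde{f}_i H)_{i \in I})_K$ represents a well-defined irreducible polynomial in $\A[G/H](G/L)$. By the computation above, its image under the map is the polynomial whose leftmost leg sends $H_i \mapsto \pi(\tilde{f}_i H) = \tilde{f}_i H' = f_i$, recovering exactly $((H_i, f_i)_{i \in I})_K$.

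The obstacle in this argument is mostly bookkeeping: one must be careful about the contravariance of the Yoneda identification (morphisms $\A[G/H] \to \A[G/H']$ correspond to morphisms $G/H' \to G/H$ in $\bispan_G$, not the other way around) and about correctly unpacking the composition formula for bispans. Once this is done, the lemma follows immediately from the existence of coset lifts through $G \to G/H \to G/H'$ together with normality of $H$ (in fact, any choice of lifts yields a valid preimage, with different choices related by the equivalence of Lemma~\ref{lem:equivalence}).
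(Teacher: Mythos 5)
Your proposal is correct and takes essentially the same route as the paper: the paper's proof simply observes that precomposition with $R_{\pr}$ turns $[G/H \xleftarrow{\sqcup \tilde f_i} \coprod_i G/H_i \to G/K \to G/L]$ into $[G/H' \xleftarrow{\pr \circ \sqcup \tilde f_i} \coprod_i G/H_i \to G/K \to G/L]$, leaving the lifting of the $f_i$ implicit. Your version just makes the Yoneda identification and the existence of lifts (using normality of $H$ so that $H_i \leq H = \tilde f_i H \tilde f_i^{-1}$) explicit, which is fine.
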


\begin{proof}
This follows because the composition of the polynomials
\[
G/H' \xleftarrow{\pr}
G/H \to G/H \to G/H, \quad G/H \xleftarrow{\sqcup f_i}
\coprod_i G/H_i \to
G/K \to G/L
\]
is the polynomial
\[
G/H' \xleftarrow{\pr \circ \sqcup f_i} \coprod_i G/H_i \to G/K
\to G/L. \qedhere
\]
\end{proof}

The punchline is that for $G$ Dedekind,
the $H_i$ must all collectively
factor through $H \cap K$---thus, for the finite generation
of $\A[G/H](G/L)$,
we only need to consider (by induction on the size of $H$)
those irreducible polynomials with $H \leq K \leq L$
(so in particular, $H \leq L$).

\begin{lem}
When $H \leq K \leq L$,
the isomorphism class of the polynomial
$((H_i, f_i)_{i \in I})_K$ in $\A[G/H](G/L)$
is determined by the reduction of the
$f_i$ mod $K/H$.
\end{lem}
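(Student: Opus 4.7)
The plan is to reduce directly to Lemma \ref{lem:equivalence}. Since $G$ is Dedekind, $H$ is normal in $G$, so $K/H$ sits inside $G/H$ as a subgroup, and the ``reduction of $f_i$ mod $K/H$'' is just the image of $f_i \in G/H$ under the projection $G/H \twoheadrightarrow (G/H)/(K/H) = G/K$. So the statement to be proved amounts to: whenever two tuples $((H_i, f_i))_K$ and $((H_i, g_i))_K$ (with the same $H_i$'s) satisfy $f_i K = g_i K$ in $G/K$ for each $i$, the corresponding irreducible polynomials are isomorphic in $\A[G/H](G/L)$.

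To apply Lemma \ref{lem:equivalence}, I need to produce some $\ell \in L/K$ together with lifts $\ell_i \in L$ for which $\ell_i f_i = g_i$ in $G/H$. The strategy is to take the trivial coset $\ell = eK$ and build each $\ell_i$ individually inside $K$. Concretely, choose representatives $f_i = a_i H$ and $g_i = b_i H$ in $G$; the hypothesis $a_i K = b_i K$ yields $b_i a_i^{-1} \in K$, and since $K \leq L$ we may set $\ell_i := b_i a_i^{-1} \in K \subseteq L$. Each $\ell_i$ lies in the identity coset of $L/K$, so they are all valid lifts of $\ell = eK$, and a direct computation gives
\[
\ell_i f_i = b_i a_i^{-1} \cdot a_i H = b_i H = g_i
\]
in $G/H$, so Lemma \ref{lem:equivalence} yields the desired isomorphism.

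The main conceptual step is to interpret ``mod $K/H$'' as the image in $G/K$ and notice that the trivial $L/K$-shift suffices because the $\ell_i$ are permitted to differ from each other inside $K$. There is essentially no obstacle here: the inclusion $K \leq L$ is precisely what guarantees that the $\ell_i$ land in $L$, and the Dedekind hypothesis is used only so that $H$ is normal (making $K/H$ a group and letting us identify the quotient with $G/K$).
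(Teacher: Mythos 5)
Your proof is correct and is essentially the paper's own argument: the paper simply says ``apply Lemma \ref{lem:equivalence} with $s=1$,'' and your write-up is the fleshed-out version of exactly that step, taking the trivial coset of $L/K$ and the lifts $\ell_i = b_i a_i^{-1} \in K$. (The only tiny quibble: normality of $K$, not just of $H$, is what guarantees $b_i a_i^{-1}\in K$, but in a Dedekind group this is automatic.)
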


\begin{proof}
Apply Lemma \ref{lem:equivalence} with $s = 1$.
\end{proof}

Thus, we can view our polynomials $((H_i, f_i)_{i \in I})_K$ as having
$f_i \in G/K$.

\begin{lem}\label{lem:multiplication_special}
Let $H \leq K \leq K' \leq L$. Then
\[
((H_i, f_i)_{i \in I})_K \cdot ((H_j', g_j)_{j \in J})_{K'}
= \sum_{s \in L/K'} \left((H_i, f_i)_{i \in I} \sqcup
\coprod_{\substack{j \in J \\ r \in L/K \\ r \mapsto s}}
(H_j', r g_j) \right)_K
\]
in $\A[G/H](G/L)$,
where here $((H_1, f_1) \sqcup (H_2, f_2))$ just means the pair
$((H_1, f_1), (H_2, f_2))$.
\end{lem}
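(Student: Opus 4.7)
My plan is to apply the general multiplication formula for bispans and then decompose the resulting polynomial by the orbit structure of the middle pullback; the combinatorics will reduce to standard pullbacks of orbit $G$-sets, which are greatly simplified by the Dedekind hypothesis.

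First I would compute the middle pullback $G/K \times_{G/L} G/K'$. Using $K \leq K' \leq L$ and the normality of all subgroups, this decomposes as $\coprod_{s \in L/K'} G/K$, with the $s$-th summand embedded via $aK \mapsto (aK, asK')$ for a choice of coset representative $s$. This accounts for the outer sum $\sum_{s \in L/K'}$ in the claim: each $s$ will contribute exactly one irreducible polynomial whose middle is the $s$-th copy of $G/K$, explaining the subscript $K$ on the right-hand side.

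Next, for each such $s$, I would identify which pieces of the source side $(\coprod_i G/H_i \times_{G/L} G/K') \sqcup (\coprod_j G/H_j' \times_{G/L} G/K)$ land in the $s$-th copy of $G/K$, and how they map down to $G/H$. For the first summand, since $H_i \leq K \leq K'$, one has $G/H_i \times_{G/L} G/K' \cong \coprod_{s \in L/K'} G/H_i$, with the $s$-th copy mapping to the $s$-th copy of $G/K$ via the natural projection, and with the composition to $G/H$ still equal to $f_i$. Hence each output summand inherits the full tuple $((H_i, f_i))_{i \in I}$ unchanged. For the second summand, $G/H_j' \times_{G/L} G/K \cong \coprod_{r \in L/K} G/H_j'$, and the $r$-th copy lands in the copy of $G/K$ indexed by the image $s = rK'$ of $r$ in $L/K'$, producing exactly the condition $r \mapsto s$ in the sum on the right.

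The final step is chasing how the map to $G/H$ gets relabeled on each orbit. Identifying the basepoint $(rH_j', K)$ of the $r$-th orbit with the identity coset $H_j' \in G/H_j'$, the induced map to $G/H$ sends $H_j' \mapsto rg_j H$, producing precisely the pair $(H_j', rg_j)$. Aggregating all such contributions over $s \in L/K'$ then yields the right-hand side. The principal obstacle will be correctly tracking the shift $g_j \mapsto rg_j$ under this orbit identification and verifying the basepoint convention reproduces the stated translate; once the explicit parametrizations of each pullback are fixed, the remainder is combinatorial bookkeeping that can be streamlined by appealing to Lemma \ref{lem:equivalence} to check isomorphism classes.
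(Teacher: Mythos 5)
Your proposal is correct and follows essentially the same route as the paper's proof: decompose the middle pullback $G/K \times_{G/L} G/K' \cong \coprod_{s \in L/K'} G/K$, sort the orbits of $A \times_Y B'$ and $A' \times_Y B$ according to which copy of $G/K$ they lie over, and track the basepoint identifications to see that the $(H_i, f_i)$ persist unchanged while the $(H_j', g_j)$ become $(H_j', rg_j)$ for each $r \in L/K$ mapping to $s$. No gaps.
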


\begin{proof}
We will unravel the multiplication rule
\[
[X \ot A \to B \to Y] \cdot [X \ot A' \to B' \to Y]
= [X \ot (A \times_Y B') \sqcup (A' \times_Y B)
\to B \times_Y B' \to Y]
\]
in this case. Recall the orbit decomposition
\[B \times_Y B' = G/K \times_{G/L} G/K'
\cong \coprod_{s \in L/K'}
G/K\]
in Lemma \ref{lem:orbit_decomposition},
where the identification is given by noting that
$G/K \times_{G/L} G/K'$ is the union of orbits of the form
\[
G/K \times_{G/L} G/K'
= \coprod_{s \in L/KK'}
G \cdot (K, sK')
= \coprod_{s \in L/K'}
\frac{G}{K} \cdot (K, sK').
\]
We also have
\begin{align*}
A \times_Y B'
= \left( \coprod_i G/H_i \right) \times_{G/L}
G/K'
&= \coprod_i G/H_i \times_{G/L} G/K' \\
&= \coprod_i \coprod_{r \in L/K'}
\frac{G}{H_i} \cdot (H_i, rK'),
\end{align*}
\begin{align*}
A' \times_Y B
= \left( \coprod_j G/H_j' \right) \times_{G/L}
G/K
&= \coprod_j G/H_j' \times_{G/L} \times G/K \\
&= \coprod_j \coprod_{r \in L/K}
\frac{G}{H_j'} \cdot (rH_j', K).
\end{align*}
Which components of $A \times_Y B'$ and
$A' \times_Y B$ lie over the component $G/K$
corresponding to a given $s \in L/K'$?
These are the components of $A \times_Y B'$ with
\[
(H_i, rK') \mapsto (K, sK')
\]
and the components of $A' \times_Y B$ with
\[
(rH_j', K) \mapsto (K, sK').
\]
In the first case we need $r \in L/K'$
to map to $s \in L/K'$ and in the second case
we need $r \in L/K$ to map to $s \in L/K'$.

Under this identification, we then see that the product is a sum
of irreducible polynomials,
indexed over $s \in L/K'$, of the form
\[
G/H \ot
\left(\coprod_{i \in I}
G/H_i \right) \sqcup
\left( \coprod_{\substack{j \in J \\
r \in L/K \\ r \mapsto s}}
G/H_j' \right)
\to G/K \to G/L.
\]
Now we analyze each component $G/H_i \to G/H$
and $G/H_j' \to G/H$. For a given $i \in I$ and $s \in L/K'$,
the element in the component $G/H_i$ of $A \times_Y B'$
lying above the $s$ piece $G/K$
corresponding to $H_i \in G/H_i$ is
$(H_i, rK')$. The map to $G/H$ is then projection onto the first
coordinate followed by $f_i$, so we see that
$G/H_i \ni H_i \mapsto f_iH \in G/H$. Similarly,
for a given $j \in J$ and $r \in L/K$ mapping to $s \in L/K'$,
the element in $A' \times_Y B$ corresponding to $H_j' \in G/H_j'$
is $(rH_j', K)$. The map to $G/H$ is projection onto the first
coordinate followed by $g_j$, so
$G/H_j' \ni H_j' \mapsto rg_j H \in G/H$.
This completes the proof.
\end{proof}

\begin{thm}
\label{thm:finite_generation}
Let $G$ be a finite Dedekind group. Then $G$ is
transitively finite, i.e.
$\A[G/H]$ is levelwise finitely generated
for any $H \leq G$.
\end{thm}

\begin{proof}
By Corollary \ref{cor:general_reduction},
it suffices to show that $\A[G/H](G/G)$ is finitely generated.
We claim that $\A[G/H](G/G)$
is generated by the following elements:
\begin{enumerate}
\item Irreducible polynomials that lie in the levelwise image of any
map $\A[G/H'] \to \A[G/H]$ induced by restriction along the
projection morphism $G/H' \to G/H$, where $H' \lneq H$, and

\item Irreducible polynomials of the form
$((H_i, f_i)_{i \in I})_K$ that satisfy
the following property: $K \geq H$, and
for any $H' \leq K$,
the number of tuples $(H_i, f_i)$ appearing in
$((H_i, f_i)_{i \in I})_K$ with $H_i = H'$
is at most $\binom{\abs{G/K}}{2} + 1$.
\end{enumerate}



Fix a subgroup $K \geq H$ and a subgroup $H_0 \leq K$.
Let's introduce some notation. Given any irreducible
polynomial $p = ((H_j, f_j)_{j \in J})_K$ and
$f \in G/K$, let $m_f(p)$ denote the number of times that
$(H_0, f)$ appears in $p$.
Let $\vec{v}(p) \in \N^{G/K}$
(here $\N^{G/K}$ is the set of $\N$-valued vectors
indexed in elements of $G/K$)
denote the vector whose $f$-coordinate has value $m_f(p)$.
Similarly, given any vector $\vec{v} \in \N^{G/K}$, let
$m_f(\vec{v})$ denote the value of $\vec{v}$ at the $f$ coordinate.
Furthermore, there is an $G/K$ action on $\N^{G/K}$ given by
$m_f(g \cdot \vec{v}) = m_{g^{-1}f}(\vec{v})$.

If $\vec{w} \in \N^{G/K}$ with $\vec{w} \leq \vec{v}(p)$
($m_f(\vec{w}) \leq m_f(p)$ for all $f \in G/K$), write
$p/\vec{w}$ for the polynomial
obtained by deleting $m_f(\vec{w})$ copies
of $(H_0, f)$ from $p$ for each $f \in G/K$. If
$\vec{w} \in \N^{G/K}$ and $K' \geq K$,
we let $p_{K'}(\vec{w})$ denote the
irreducible polynomial
\[
p_{K'}(\vec{w}) = \left( \coprod_{f \in G/K}
\coprod_{m_f(\vec{w})} (H_0, f) \right)_{K'}
\]
i.e. $(H_0, f)$ appears in $p_{K'}(\vec{w})$ exactly $m_f(\vec{w})$
times.

Lastly, for any $\vec{v} \in \N^{G/K}$
and $p = ((H_j, f_j)_{j \in J})_K$,
write $p^{\text{edit}}(\vec{v})$ to be the irreducible
polynomial consisting of all the pairs $(H_j, f_j)$
with $j \in J$ and $H_j \ne H_0$, and that contains
exactly $m_f(\vec{v})$ copies of $(H_0, f)$ for each
$f \in G/K$. In other words,
$p^{\text{edit}}(\vec{v})$ is obtained from $p$ by
editing the multiplicities of each $(H_0, f)$ to be exactly
that described by $\vec{v}$.
In this notation, $p/\vec{w}
= p^{\text{edit}}(\vec{v}(p) - \vec{w})$ and
Lemma \ref{lem:multiplication_special}
implies
\[
p^{\text{edit}}(\vec{v}) \cdot p_{K'}(\vec{w})
= \sum_{s \in G/K'} p^{\text{edit}}\left( \vec{v} +
\sum_{\substack{r \in G/K \\ r \mapsto s}} r \cdot \vec{w} \right).
\]
for any $K' \geq K$.

What we will show is this: every irreducible polynomial
of the form $p = ((H_j, f_j)_{j \in J})_K$
with $\sum_{f \in G/K} m_f(p) > \binom{\abs{G/K}}{2} + 1$
can be expressed as a combination of terms of lower numerical degree,
generators of type (b), and irreducible polynomials of the form
$p^{\text{edit}}(\vec{v})$ where $\sum_{f \in G/K} m_f(\vec{v})
\leq \binom{\abs{G/K}}{2} + 1$. This is sufficient by induction
on the number of distinct subgroups $(H, -)$ that appear in a
polynomial $p$ more than $\binom{\abs{G/K}}{2} + 1$ times
and by the fact that generators of type (a)
and type (b) contain all irreducible polynomials of degree
$(n, K')$ for $n$ sufficiently low.

Our proof strategy is a generalization of the corresponding
proof for $G = C_p$ in \cite{chan2025tambaraaffineline}.
We adopt analogous notation:
for any $j \geq 0$ let
\[
S_j(\vec{v}) = \{f \in G/K: m_f(\vec{v}) = j\}
\]
and define $S_j(p) = S_j(\vec{v}(p))$
for any polynomial $p = ((H_j, f_j)_{j \in J})_K$. Similarly define
\[
T_j(\vec{v}) = \bigcup_{i \geq j} S_i(\vec{v}),
\quad T_j(p) = T_j(\vec{v}(p)).
\]
Given any subset $A \subseteq G/K$, let $\vec{e}(A)
\in \N^{G/K}$ be the vector with
\[
m_f(\vec{e}(A)) = \begin{cases} 1 & f \in A, \\ 0 & \text{otherwise.}
\end{cases}
\]
Note that if $A \subseteq G/K$ is a subset whose elements
have pairwise distinct images in $G/K'$, where $K' \geq K$, then
the polynomial $p_{K'}(\vec{e}(A))$ is a type (b) generator,
since $\abs{A} \leq \abs{G/K'} \leq \binom{\abs{G/K'}}{2} + 1$.
The other reason for choosing the type (b) generators as they are
is the following lemma.

\begin{lem}\label{lem:subclaim}
Let $p = ((H_j, f_j)_{j \in J})_K$ be an irreducible
polynomial such that the number of tuples
$(H_j, f_j)$ with $H_j = H_0$ in $p$ exceeds
$\binom{\abs{G/K}}{2} + 1$
(i.e. $\sum_{f \in G/K} m_f(p) > \binom{\abs{G/K}}{2} + 1$).
Then there exists an integer $j \geq 0$ such that
$S_j(p) = \varnothing$ but $S_{j+1}(p) \ne \varnothing$.
In particular, if there exists some $k \geq 0$ such
that $T_{k+1}(p) = 0$ but
$S_i(p) = \varnothing$ for all $i \leq k$,
then $\sum_{f \in G/K} m_f(p) \leq \binom{\abs{G/K}}{2} + 1$.
\end{lem}

\begin{proof}
Otherwise, there must exist an $r \geq 0$ such that
$S_j(p) \ne \varnothing$ for all $j \leq r$. Since
\[
\sum_{f \in G/K} m_f(p) = \sum_{j \geq 0} j \abs{S_j(p)}
\]
and $\sum_{j \geq 0} \abs{S_j(p)} = \abs{G/K}$,
this sum is maximized when $\abs{S_j(p)} = 1$
for all $j \leq \abs{G/K} - 1$, in which case
$\sum_{f \in G/K} m_f(p) \leq 1 + \cdots + (\abs{G/K} - 1)
= \binom{\abs{G/K}}{2}$,
contradicting our assumption for $p$.
The last statement follows from the fact that
if a $k$ as described exists, then there does not exist an integer
$j \geq 0$  with $S_j(p) = \varnothing$ and $S_{j+1}(p) \ne
\varnothing$.
\end{proof}

We use a nested induction argument, inducting
successively on $\abs{S_0(b)}, \abs{S_1(b)}, \ldots$, and so
forth, to prove the following claim:
if $p = ((H_j, f_j)_{j \in J})_K$ with
$\sum_{f \in G/K} m_f(p) > \binom{\abs{G/K}}{2} + 1$,
it may be expressed as a combination of terms
of lower numerical degree, generators of type (b), and irreducible
polynomials of the form $p^{\text{edit}}(\vec{v})$
where $\sum_{f \in G/K} m_f(\vec{v}) \leq \binom{\abs{G/K}}{2} + 1$.
We start with the base case
$\abs{S_0(b)} = 0$. If $T_0(b) = 0$, then we are trivially done.
Otherwise, we have
\[
p = (p/\vec{e}(G/K)) \cdot p_G(\vec{e}(\{1\}))
\]
by Lemma \ref{lem:multiplication_special}.
Note that $p/\vec{e}(G/K)$ is of lower numerical degree
(because it contains less tuples than $p$) and
$p_G(\vec{e}(\{1\}))$ is a type (b) generator
(it is of the form $p_{K'}(\vec{e}(A))$ described above).

We now establish the following inductive hypothesis:
\begin{enumerate}
\item[(0)] Suppose there exists an integer $m_0 > 0$ such that
any irreducible polynomial of the form
$p = ((H_j, f_j)_{j \in J})_K$ with $\abs{S_0(p)} < m_0$
can be expressed as a combination of generators, lower-degree
terms, and polynomials of the form $p^{\text{edit}}(\vec{v})$
where $\sum_{f \in G/K} m_f(\vec{v}) \leq \binom{\abs{G/K}}{2} + 1$.
\end{enumerate}
We prove the inductive step for $\abs{S_0(p)} = m_0$
by induction on $\abs{S_1(p)}$. At the base case, suppose
$\abs{S_1(p)} = 0$. If $T_1(p) = \varnothing$, then
by Lemma \ref{lem:subclaim},
$\sum_{f \in G/K} m_f(\vec{v}) \leq \binom{\abs{G/K}}{2} + 1$
and we are done. Otherwise,
we have $G/K = T_1(p) \sqcup S_0(p)$, where $T_1(p)$
is nonempty.

Let $K'/K$ be the stabilizer of $T_1(p)$ under
the action of $G/K$, so that $T_1$ is the union of cosets
\[
T_1(p) = (g_1K'/K) \sqcup \cdots \sqcup (g_t K'/K).
\]
By assumption, $K'/K \ne G/K$, since $T_1(b)$ is nonempty
and cannot be all of $G/K$, since $\abs{S_0(p)} = m_0 > 0$.
Lemma \ref{lem:multiplication_special}
then implies
\begin{align*}
p &= (p/\vec{e}(T_1(p))) \cdot
p_{K'}(\vec{e}(\{g_1, \ldots, g_t\})) \\
&\quad -
\sum_{1 \ne s \in G/K'} p^{\text{edit}}\left(\vec{v}(p) - \vec{e}(T_1(p))
+ \sum_{\substack{r \in G/K \\ r \mapsto s}}
r \cdot \vec{e}(\{g_1, \ldots, g_t\}) \right) \\
&= (p/\vec{e}(T_1(p))) \cdot
p_{K'}(\vec{e}(\{g_1, \ldots, g_t\})) \\
&\quad -
\sum_{1 \ne s \in G/K'} p^{\text{edit}}\left(\vec{v}(p) - \vec{e}(T_1(p))
+ s \cdot \vec{e}(T_1(p)) \right)
\end{align*}
Note that $p/\vec{e}(T_1(p))$ is a term of lower degree, while
$p_{K'}(\vec{e}(\{g_1, \ldots, g_t\}))$ is a type (b) generator
(again because
it is of the form $p_{K'}(\vec{e}(A))$ described above).
The first equality follows because
\[
\sum_{\substack{r \in G/K \\ r \mapsto 1 \in G/K'}}
r \cdot \vec{e}(\{g_1, \ldots, g_t\})
= \sum_{r \in K'/K} \vec{e}(\{rg_1, \ldots, rg_t\})
= \vec{e}((g_1K'/K) \sqcup \cdots \sqcup (g_t K'/K)),
\]
where we have also used the fact $K'/K$ is a normal subgroup
of $G/K$ to write
$(K'/K)g_i = g_i K'/K$. For any $1 \ne s \in G/K'$, consider the term
\[
b =
p^{\text{edit}}(\vec{v}(p) - \vec{e}(T_1(p)) + s \cdot (\vec{e}(T_1(p)))).
\]
Because $s$ does not stabilize $T_1(p)$, we have
$(s \cdot T_1(p)) \cap S_0(p) \ne \varnothing$. Thus,
$\abs{S_0(b)} < m_0$ (a nonzero number
of elements $f \in G/K$ with multiplicity $0$ must
increase in multiplicity) and these terms can be decomposed
in the desired way by the inductive hypothesis.

We now establish the following inductive hypothesis.
\begin{enumerate}
\item[(1)] Suppose there exists an integer $m_1 > 0$ such that
any irreducible polynomial of the form
$p = ((H_j, f_j)_{j \in J})_K$ with $\abs{S_0(p)} = m_0$
and $\abs{S_1(p)} < m_1$
can be expressed as a combination of generators, lower-degree
terms, and polynomials of the form $p^{\text{edit}}(\vec{v})$
where $\sum_{f \in G/K} m_f(\vec{v}) \leq \binom{\abs{G/K}}{2} + 1$.
\end{enumerate}
We prove the inductive step for $\abs{S_1(p)} = m_1$
by induction on $\abs{S_2(p)}$. At the base case,
suppose $\abs{S_2(p)} = 0$. If $T_2(p) = \varnothing$,
then by Lemma \ref{lem:subclaim},
$\sum_{f \in G/K} m_f(\vec{v}) \leq \binom{\abs{G/K}}{2} +1 $
and we are done. Otherwise, we have $G/K = T_2(p) \sqcup S_1(p)
\sqcup S_0(p)$, where $T_2(p)$ is nonempty.

As before, let $K'/K$ be the stabilizer of $T_2(p)$ under the
action of $G/K$, so that $T_2$ is the union of cosets
\[
T_2(p) = (g_1K'/K) \sqcup \cdots \sqcup (g_tK'/K).
\]
By assumption, $K'/K \ne G/K$, since $T_1(p)$ is nonempty and cannot
be all of $G/K$, since $S_0(p)$ and $S_1(p)$ are both nonempty.
Lemma \ref{lem:multiplication_special} implies
\begin{align*}
p &= (p/\vec{e}(T_2(p))) \cdot
p_{K'}(\vec{e}(\{g_1, \ldots, g_t\})) \\
&\quad -
\sum_{1 \ne s \in G/K'} p^{\text{edit}}\left(\vec{v}(p) - \vec{e}(T_2(p))
+ \sum_{\substack{r \in G/K \\ r \mapsto s}}
r \cdot \vec{e}(\{g_1, \ldots, g_t\}) \right) \\
&= (p/\vec{e}(T_2(p))) \cdot
p_{K'}(\vec{e}(\{g_1, \ldots, g_t\})) \\
&\quad -
\sum_{1 \ne s \in G/K'} p^{\text{edit}}\left(\vec{v}(p) - \vec{e}(T_2(p))
+ s \cdot \vec{e}(T_2(p)) \right)
\end{align*}
$p/\vec{e}(T_2(p))$ is a lower degree term while
$p_{K'}(\vec{e}(\{g_1, \ldots, g_t\}))$ is a type (b) generator
for the same reasons as before.
Consider now the term
\[
b = p^{\text{edit}}(\vec{v}(p) - \vec{e}(T_2(p)) + s \cdot
\vec{e}(T_2(p)))
\]
for $1 \ne s \in G/K'$. Because $s$ does not stabilize $T_2(p)$,
we have $(s \cdot T_2(p)) \cap (S_1(p) \sqcup S_0(p)) \ne \varnothing$.
If $(s \cdot T_2(p)) \cap S_0(p) \ne \varnothing$,
then $\abs{S_0(b)} < m_0$ as before and we are done by the
inductive hypothesis. Otherwise, we must have
$(s \cdot T_2(p)) \cap S_0(p) = \varnothing$ and
$(s \cdot T_2(p)) \cap S_1(p) \ne \varnothing$, so that
$\abs{S_0(b)} = m_0$ and $\abs{S_0(b)} < m_1$, whence we are done
by the inductive hypothesis as well.

We continue the induction in this way; the base case for
step ($k$) follows from the inductive hypotheses
for steps ($j$) with $j < k$. If we can ever prove the inductive
step for any ($k$), then the entire induction completes and we are
done. The inductive step at step ($k$) is to show that
any irreducible polynomial of the form
$p = ((H_j, f_j)_{j \in J})$ with $\abs{S_i(p)} = m_i$
and $m_i > 0$
for all $0 \leq i \leq k$ can be decomposed in the desired
manner. But when $k = \abs{G/K} - 1$,
the fact that $\abs{S_i(p)} \ne \varnothing$ for all
$i \leq \abs{G/K} - 1$ implies by the proof of
Lemma \ref{lem:subclaim} that $T_{k+1}(p) = \varnothing$ and thus
$\sum_{f \in G/K} m_f(p) \leq \binom{\abs{G/K}}{2} + 1$,
so $p$ must already be of the desired form. This completes the proof
of the inductive step and thus of the entire statement.
\end{proof}

\section{Extension to Non-Transitive Sets}\label{sec:boxproduct}

\subsection{A General Formula for the Box Product}

Our goal for this section is to show that the levelwise
grading and finite generation results can be extended to
$\A[X]$ when $X$ is not transitive---in particular, that
transitively finite groups are Tambara finite.
We review the construction of the box product and its
relation with Dress pairings.
Let $G$ be a finite group.
The category of finite $G$-sets is symmetric monoidal with
respect to the product $A \times B$ of two $G$-sets---this
induces a symmetric monoidal product on the categories
$\bispan_G$ and $\bispan_G^+$. The \textit{box product}
of two Mackey functors $M, N$ is defined to be the Day convolution
of $M$ and $N$ with respect to this monoidal structure, i.e.
$M \boxtimes N$ is the left Kan extension
\[
\begin{tikzcd}
\bispan^+_G \times \bispan^+_G \ar[rr, "M \times N"]
\ar[dr, "\otimes", swap]
& \ar[d, phantom, "\Downarrow" description]& \cat{Set} \\
& \bispan^+_G \ar[ur, "M \boxtimes N", swap] &
\end{tikzcd}
\]
More concretely, there exist for any finite $G$-sets $X, Y$ maps
\[
M(X) \times N(Y) \to (M \boxtimes N)(X \times Y)
\]
natural in $X$ and $Y$, and moreover $M \boxtimes N$ is initial
with respect to this property. The universal property of
$M \boxtimes N$ can also be described using Dress pairings:

\begin{lem}[\cite{lewis1981}]
A morphism $M \boxtimes N \to P$ is equivalent to the following
data: for each subgroup $H \leq G$, a bilinear map
$f_H: M(G/H) \otimes N(G/H) \to P(G/H)$ such that the following
are satisfied:
\begin{enumerate}
\item $f_H \circ (\Res_H^K \otimes \Res_H^K) = \Res_H^K \circ f_K$
whenever $H \leq K$.

\item $f_H \circ (c_g \otimes c_g) = c_g \circ f_H$ for all
$g \in G$ and $H \leq G$.

\item For any $H \leq K$,
\[
\Tr_H^K \circ f_H \circ (\Res_H^K \otimes \id)
= f_K \circ (\id \otimes \Tr_H^K),
\quad
\Tr_H^K \circ f_H \circ (\id \otimes \Res_H^K)
= f_K \circ (\Tr_H^K \otimes \id).
\]
\end{enumerate}
\end{lem}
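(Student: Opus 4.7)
The plan is to use the universal property of the box product as a Day convolution. By that definition, a morphism $\phi: M \boxtimes N \to P$ is the same data as a family of bilinear \emph{external pairings} $\mu_{X,Y}: M(X) \otimes N(Y) \to P(X \times Y)$ natural in $X$ and $Y$ as morphisms in $\bispan_G^+$. The goal is therefore to exhibit a bijection between such external pairings and Dress pairings $(f_H)$ satisfying (1)--(3).

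For the forward direction, I would define $f_H$ as the composition of $\mu_{G/H, G/H}$ with restriction along the diagonal $\Delta: G/H \to G/H \times G/H$, namely
\[
M(G/H) \otimes N(G/H) \xrightarrow{\mu_{G/H, G/H}} P(G/H \times G/H) \xrightarrow{R_\Delta} P(G/H).
\]
Conditions (1) and (2) are immediate from naturality of $\mu$ with respect to restriction along $G/H \to G/K$ and the conjugation isomorphism $c_g$, together with the obvious compatibility of these morphisms with the diagonal. Condition (3) is the substantive check: it reduces to naturality of $\mu$ with respect to transfer, applied to the pullback square relating the diagonal to $\Tr_H^K$ in one coordinate.

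For the backward direction, given $(f_H)$, I would construct the external pairing $\mu_{X,Y}$. By bilinearity and the fact that $M$, $N$, $P$ are product-preserving, it suffices to define $\mu$ for $X = G/H$, $Y = G/K$. Using the orbit decomposition
\[
G/H \times G/K \cong \coprod_{g \in H \backslash G / K} G/(H \cap gKg^{-1}),
\]
I would define the component of $\mu_{G/H, G/K}$ landing in $P(G/(H \cap gKg^{-1}))$ to be $f_{H \cap gKg^{-1}}$ precomposed with the appropriate restriction on the first factor and the composite of $c_g$ with restriction on the second factor. Naturality against restrictions and conjugations on either side is then immediate from (1) and (2).

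The main obstacle will be showing that this backward assignment is natural with respect to transfers; this is precisely what condition (3) is designed to encode. Concretely, given $\pi: G/H' \to G/H$, one must verify that $\mu$ intertwines $T_\pi$ on the $M$-side with $T_{\pi \times 1}$ on the target, and the argument combines the Dress condition (3) with the double-coset formula applied to the pullback of $\pi \times 1$ along the orbit inclusions above. Once this is in hand, checking that the two constructions are mutually inverse is straightforward: the round trip $(f_H) \mapsto \mu \mapsto (\tilde{f}_H)$ reduces to the identification of the $g = e$ component of $G/H \times G/H$ with the image of the diagonal, on which all the relevant restrictions and conjugations become identities.
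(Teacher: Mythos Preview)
The paper does not give its own proof of this lemma; it is simply cited from \cite{lewis1981}. However, the paper does provide a proof sketch for the $N$-fold generalization immediately afterward, and your approach matches that sketch. Your forward direction (compose $\mu_{G/H,G/H}$ with restriction along the diagonal) is identical to the paper's. For the backward direction, the paper packages the construction slightly differently: rather than writing down $\mu_{G/H,G/K}$ directly via the double coset decomposition, it first extends the family $(f_H)$ to maps $f_X: M(X)\otimes N(X)\to P(X)$ for arbitrary finite $G$-sets $X$ (componentwise on orbits), and then sets $\mu_{X,Y}$ to be the composite of $R_{\pr_1}\otimes R_{\pr_2}$ with $f_{X\times Y}$. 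Unwinding this for $X=G/H$, $Y=G/K$ recovers exactly your double-coset formula, so the two constructions coincide; the paper's phrasing just makes the naturality checks a bit more uniform.
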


The box product of two Tambara functors
is again Tambara functor in a canonical way:

\begin{lem}[\cite{strickland2012tambarafunctors}]
If $T, R$ are Tambara functors,
there is a unique Tambara functor structure for
the Mackey functor
$T \boxtimes R$ such that the Dress pairings
\[
f_H: T(G/H) \otimes R(G/H) \to (T \boxtimes R)(G/H)
\]
are ring maps and
satisfy $f_K( \Nm_H^K(x) \otimes \Nm_H^K(y))
= \Nm_H^K (f_H(x \otimes y))$ whenever
$x \otimes y \in T(G/H) \otimes R(G/H)$ and
$H \leq K$. So defined,
$\boxtimes$ is the coproduct in the category
of Tambara functors.
\end{lem}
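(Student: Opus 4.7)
The plan is to split the statement into three parts: uniqueness of the Tambara structure on $T \boxtimes R$, its existence with the stated compatibilities, and the coproduct property. The conceptual input throughout is that Tambara functors are product-preserving functors $\bispan_G \to \cat{Set}$, and the Cartesian product of $G$-sets endows $\bispan_G$ with a symmetric monoidal structure restricting to the one on $\bispan_G^+$ that defines $\boxtimes$ for Mackey functors.

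For \emph{uniqueness}, I would first observe that the Mackey box product is generated, under addition and transfers, by the images of the Dress pairings; every element of $(T \boxtimes R)(G/H)$ can be written as a sum $\sum_K \Tr_K^H f_K(x_K \otimes y_K)$. The ring multiplication is then forced by bilinearity and Frobenius reciprocity (both present as constraints on the Dress pairing data), and the norm maps are forced by the exponential formula together with the required identity $\Nm_H^K \circ f_H = f_K \circ (\Nm_H^K \otimes \Nm_H^K)$ and the multiplicative analogue of the double coset formula describing how a norm interacts with a transfer. So any Tambara structure satisfying the stated conditions must equal the one below.

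For \emph{existence}, the cleanest route is via Day convolution. The category of product-preserving functors $\bispan_G \to \cat{Set}$ inherits a symmetric monoidal product from the Cartesian product on $\bispan_G$, and I would check this agrees levelwise with $\boxtimes$ on underlying Mackey functors by comparing the universal property of Day convolution with the Dress pairing characterization. The norm and multiplication maps on $T \boxtimes R$ are then read off from the semi-Mackey structure on $\bispan_G$. More concretely, one can define the norm directly by decomposing into transfers of Dress pairings and expanding $\Nm_H^K \Tr_{H'}^H f_{H'}(x \otimes y)$ via the exponential formula applied to $G/H' \to G/H \to G/K$; well-definedness modulo the Day convolution relations and the exponential formula itself is the main technical obstacle, and is where a careful diagram chase is needed.

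For the \emph{coproduct property}, given Tambara maps $\phi: T \to S$ and $\psi: R \to S$, I would define bilinear maps $h_H(t \otimes r) = \phi_H(t) \cdot \psi_H(r)$. Compatibility with $\Res$ and $c_g$ is immediate from $\phi, \psi$ being Tambara maps, and compatibility with $\Tr$ follows from Frobenius reciprocity in $S$, so $\{h_H\}$ is a Dress pairing and induces a Mackey map $T \boxtimes R \to S$. That this is a Tambara map follows because it preserves multiplication by construction and commutes with norms via the identity $f_K(\Nm_H^K x \otimes \Nm_H^K y) = \Nm_H^K f_H(x \otimes y)$ together with $\phi, \psi$ being norm-preserving; uniqueness of the extension comes from the generation statement used in the uniqueness step. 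The principal obstacle across the whole argument is controlling the exponential formula on $T \boxtimes R$, which is the delicate coherence one has to pay for to upgrade a Mackey-level construction to a Tambara-level one.
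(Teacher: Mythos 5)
The paper does not actually prove this lemma; it is quoted from \cite{strickland2012tambarafunctors}, so there is no in-text argument to compare yours against. That said, your outline is the standard one, and the uniqueness and coproduct steps are essentially right: once you know $(T\boxtimes R)(G/H)$ is generated by transfers of Dress-pairing images (the content of Theorem \ref{thm:boxproduct}), the multiplication is forced by Frobenius reciprocity and the double coset formula, the norms are forced by the exponential formula applied to $\Nm_H^K\Tr_{H'}^H$ (note you also need the ``norm of a sum'' consequence of the exponential formula, since norms are not additive), and the universal map $T\boxtimes R\to S$ associated to a pair of Tambara maps is the one induced by the Dress pairing $t\otimes r\mapsto\phi_H(t)\psi_H(r)$, exactly as you say.

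The gap is in the existence step, which is where the actual theorem lives. You defer the construction of the norm maps and the verification of the exponential formula on $T\boxtimes R$ to ``a careful diagram chase,'' but that chase is the entire content of the result: one must check that the formula for $\Nm_H^K$ on a sum of transfers, which uniqueness dictates, is well defined modulo the Frobenius and Weyl relations and genuinely satisfies the exponential condition for all exponential diagrams, not just those coming from subgroup inclusions. The Day-convolution route you gesture at also hides two verifications: semi-Tambara functors are \emph{product-preserving} functors on $\bispan_G$, so the Day convolution must be reflected back into product-preserving functors, and one must separately check that restriction along $\bispan_G^+\hookrightarrow\bispan_G$ carries this convolution to the Mackey-functor box product --- neither is automatic. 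As a roadmap your proposal is sound and matches how the result is proved in the literature, but as written it omits exactly the part that makes the lemma nontrivial.
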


A slight generalization of the Dress pairing
conditions can be formulated for an
$n$-fold box product.

\begin{lem}
A morphism
$M_1 \boxtimes \cdots \boxtimes M_N \to P$
is equivalent to the following data:
for each subgroup $H \leq G$, a multilinear
map $f_H:M_1(G/H) \otimes \cdots
\otimes M_N(G/H) \to P(G/H)$ such that the
following are satisfied:
\begin{enumerate}
\item $f_H \circ (\Res_H^K \otimes \cdots
\otimes \Res_H^K) = \Res_H^K \circ f_K$ whenever
$H \leq K$.

\item $f_H \circ (c_g \otimes \cdots \otimes c_g)
= c_g \circ f_H$ for all $g \in G$ and $H \leq G$.

\item For any $i = 1, \ldots, N$ and $H \leq K$,
\begin{align*}
&\Tr_H^K \circ f_H \circ (\Res_H^K
\otimes \cdots \otimes \Res_H^K
\otimes \id_i \otimes \Res_H^K\otimes
\cdots \otimes \Res_H^K) \\
&\quad =
f_K \circ ( \id_1 \otimes \cdots \otimes \id_{i-1}
\otimes \Tr_H^K
\otimes \id_{i+1} \otimes \cdots \otimes \id_N ),
\end{align*}
where $\id_i$ means that on the $i$\textsuperscript{th}
factor, $\id$ is being applied.
\end{enumerate}
\end{lem}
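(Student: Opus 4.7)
The plan is to proceed by induction on $N$, leveraging the $N = 2$ case (the Dress pairing lemma cited immediately above) together with the associativity of the box product of Mackey functors. The case $N = 1$ is tautological (a natural transformation of Mackey functors), and $N = 2$ is given. For $N > 2$, the canonical isomorphism $M_1 \boxtimes \cdots \boxtimes M_N \cong (M_1 \boxtimes \cdots \boxtimes M_{N-1}) \boxtimes M_N$ lets us split the problem: a morphism $\phi$ out of the right-hand side corresponds, by the 2-fold Dress lemma, to bilinear pairings $g_H: (M_1 \boxtimes \cdots \boxtimes M_{N-1})(G/H) \otimes M_N(G/H) \to P(G/H)$ satisfying the 2-fold conditions (1), (2), (3). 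Meanwhile, by the inductive hypothesis applied to $M_1, \ldots, M_{N-1}$, there is a universal $(N-1)$-fold multilinear family $\iota_H: M_1(G/H) \otimes \cdots \otimes M_{N-1}(G/H) \to (M_1 \boxtimes \cdots \boxtimes M_{N-1})(G/H)$ enjoying the analogous compatibility conditions.

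For the forward direction, given $\phi$, define $f_H(x_1 \otimes \cdots \otimes x_N) = g_H(\iota_H(x_1 \otimes \cdots \otimes x_{N-1}) \otimes x_N)$. Multilinearity and conditions (1), (2) follow immediately from those on $\iota_H$ (inductive hypothesis) and on $g_H$ (2-fold case). For condition (3), the cases $i < N$ follow from the $(N-1)$-fold Frobenius reciprocity encoded in $\iota_H$, while $i = N$ is exactly the 2-fold Frobenius condition for $g_H$.

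For the reverse direction, given an $N$-fold Dress datum $(f_H)$, reverse the construction as follows. For each $y \in M_N(G/H)$, define $f_H^y(x_1 \otimes \cdots \otimes x_{N-1}) = f_H(x_1 \otimes \cdots \otimes x_{N-1} \otimes y)$; the restrictions of conditions (1), (2), (3) to the first $N - 1$ slots assert precisely that the family $\{f_H^y\}_{H, y}$ forms an $(N-1)$-fold Dress datum in the first $N-1$ factors, so the inductive hypothesis produces maps $g_H(-, y): (M_1 \boxtimes \cdots \boxtimes M_{N-1})(G/H) \to P(G/H)$. Linearity in $y$ and compatibility with $\Res_H^K$ and $c_g$ on the $M_N$-slot follow from multilinearity of $f_H$ and from (1), (2) applied to the last slot, yielding a bilinear $g_H$; the 2-fold Frobenius condition for $g_H$ matches the $i = N$ case of (3) in the $N$-fold datum, so the 2-fold Dress lemma assembles a morphism $\phi$.

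The main obstacle is the bookkeeping in the reverse direction: one must confirm that the assignment $y \mapsto g_H(-, y)$ genuinely descends through the inductive universal property to a well-defined pairing on the full box product $(M_1 \boxtimes \cdots \boxtimes M_{N-1})(G/H)$, not merely on the image of $\iota_H$. This is handled by noting that both sides of each verification equation are linear in $y$ and that the $(N-1)$-fold universal property from the inductive hypothesis pins down the extension uniquely. Beyond this careful slot-by-slot accounting, no genuinely new computational difficulty arises beyond what was already handled in the 2-fold case.
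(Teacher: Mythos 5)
Your approach is genuinely different from the paper's. The paper argues directly from the Day convolution description of $\boxtimes$: given a natural transformation of functors on products $X_1 \times \cdots \times X_N$, it extracts $f_H$ by restricting along the diagonal $G/H \to G/H \times \cdots \times G/H$; conversely it reassembles the natural transformation from the $f_H$'s by first extending to nontransitive $X$ and then pulling back along projections. You instead induct on $N$ using associativity of $\boxtimes$ and the $N=2$ Dress lemma. Both are legitimate routes, and your forward direction is correct once you note it really uses \emph{both} the $(N-1)$-fold Frobenius relation for $\iota$ and the $2$-fold one for $g$ (you attribute the $i<N$ cases solely to $\iota$, but the computation needs a second application of Frobenius to move $\Res$ onto the $M_N$-slot).

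The reverse direction, however, has a genuine gap that your final paragraph flags but does not resolve. You fix $y \in M_N(G/H)$ and assert that the restriction of conditions (1)--(3) to the first $N-1$ slots makes $\{f_H^y\}$ an $(N-1)$-fold Dress datum, from which the inductive universal property is supposed to produce $g_H(-,y)$ on the full level $\mathcal{M}(G/H)$, $\mathcal{M} = M_1 \boxtimes \cdots \boxtimes M_{N-1}$. But condition (1) for the $N$-fold datum reads $f_H(\Res x_1, \ldots, \Res x_{N-1}, \Res y) = \Res f_K(x_1, \ldots, x_{N-1}, y)$, i.e.\ it relates $f_H^{\Res y}$ to $\Res \circ f_K^{y}$ --- not $f_H^y$ to $\Res \circ f_K^y$. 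So for a \emph{fixed} $y$ the family $\{f_H^y\}_H$ does \emph{not} satisfy the $(N-1)$-fold Dress conditions, and the inductive universal property (which only manufactures morphisms of Mackey functors, never bare level-$H$ maps) does not directly apply. Appealing to ``linearity in $y$'' does not repair this, since the obstruction is about compatibility of the index $y$ across levels, not about additivity. The fix is to work over the subgroup: for fixed $H$ and $y \in M_N(G/H)$, set $h_K^y := f_K(-, \ldots, -, \Res_K^H y)$ for $K \leq H$; this \emph{is} an $(N-1)$-fold Dress datum for $H$-Mackey functors, and since $\Res_H^G$ is strong monoidal for $\boxtimes$, the inductive hypothesis applied over $H$ gives a morphism $\Res_H^G \mathcal{M} \to \Res_H^G P$ whose value at $H/H$ is the desired $g_H(-, y)$. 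One then still has to verify the $2$-fold conditions for $(g_H)$, which your outline covers. With this repair the induction goes through; without it the reverse direction is incomplete.
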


\begin{proof}[Proof Sketch.]
Given natural morphisms
\[
M_1(X_1) \times \cdots \times M_N(X_N) \to P(X_1 \times
\cdots \times X_N),
\]
the multilinear map $f_H$ is defined via the composition
\[
M_1(G/H) \times \cdots \times M_N(G/H) \to P(G/H \times
\cdots \times G/H) \xrightarrow{R_\delta} P(G/H),
\]
where $R_\delta$ is restriction along the diagonal map
$\delta: G/H \to G/H \times \cdots \times G/H$.

Conversely, given multilinear maps
$f_H: M_1(G/H) \otimes \cdots \otimes M_N(G/H) \to P(G/H)$,
we extend these to multilinear maps
$f_X: M_1(X) \times \cdots \times M_N(X) \to P(X)$
for a nontransitive $G$-set $X = G/H_1 \sqcup \cdots \sqcup G/H_m$
via the following procedure.
Using the fact that
\[
M_j\left( \coprod_i G/H_i \right) \cong \prod_i M_j(G/H_i),
\]
write $x_{i, j}$ for the $M_j(G/H_i)$-component of
an element $(x_1, \ldots, x_N) \in M_1(X) \times \cdots \times M_N(X)$.
Setting
\[
f_X(x_1, \ldots, x_N) =
(f_{H_1}(x_{1, 1}, \ldots, x_{1, N}), \ldots, f_{H_m}(x_{m, 1},
\ldots, x_{m, N})),
\]
the maps $M_1(X_1) \times \cdots \times
M_N(X_N) \to P(X_1 \times \cdots \times X_N)$ are given by
\[
M_1(X_1) \times \cdots \times
M_N(X_N)
\xrightarrow{R_{\pr_i}} \prod_{i=1}^N M_i
\left( \prod_{j=1}^N X_j \right)
\xrightarrow{f_{\prod_{j=1}^N X_j}}
P\left( \prod_{j=1}^N X_j \right).
\]
The maps $f_X$ do not depend on the choice of isomorphism
$X \cong G/H_1 \sqcup \cdots \sqcup G/H_m$, since such a choice
induces an action of $c_{g_i}$ on each $M_j(G/H_i)$ for some
$g_i \in G$, and we know that the $f_{H_i}$ are equivariant
under such an action by assumption.
\end{proof}

We describe a formula for the $n$-fold box product
of two Mackey functors over an arbitrary finite
group $G$. First, a lemma.

\begin{lem}\label{lem:double_coset}
Let $G$ be a group
and let $H, K, L, M$ be subgroups, with
$H \leq K \leq L$ and $M \leq L$.
Then the double coset representatives of
$H \backslash L / M$ are in bijection with the
elements $\{y_x \cdot x\}$, where the $x$ range over the
double coset representatives of $K \backslash L / M$
and the $y_x$, for each fixed $x$, range over the
double coset representatives of $H \backslash K/
(xMx^{-1} \cap K)$.
\end{lem}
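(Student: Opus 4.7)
The plan is to prove the bijection by decomposing $L$ into $K$-$M$ double cosets and then further subdividing each piece into $H$-$M$ double cosets.

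First I would write $L = \bigsqcup_x KxM$ as a disjoint union over a chosen set of double coset representatives $x \in K \backslash L / M$. Since every $H$-$M$ double coset in $L$ is contained in a unique $K$-$M$ double coset (as $H \leq K$), it suffices to show that for each fixed $x$, the double cosets in $H \backslash (KxM) / M$ are in bijection with $H \backslash K / (xMx^{-1} \cap K)$, via $y \mapsto HyxM$.

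For this, I would use the standard identification of $K$-sets $KxM/M \cong K/(xMx^{-1} \cap K)$ sending $kxM \mapsto k(xMx^{-1} \cap K)$ (well-defined and bijective because the stabilizer of $xM$ under left $K$-action is exactly $xMx^{-1} \cap K$). Taking $H$-orbits on both sides: the left side is $H \backslash KxM / M$, and the right side is $H \backslash K / (xMx^{-1} \cap K)$, which produces the desired bijection on the level of double coset \emph{sets}. Tracing through the isomorphism, a representative $y_x$ for a class on the right corresponds to the element $y_x x \in L$ on the left, which is what the lemma claims.

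The only verification worth spelling out is that the assembled map $(x, y_x) \mapsto y_x x$ is both surjective and injective into $H \backslash L / M$. Surjectivity: given $\ell \in L$, write $\ell = kxm$ for the unique $x$ with $\ell \in KxM$, then decompose $k = h y_x q$ with $q \in xMx^{-1} \cap K$; pushing $q$ across $x$ lands it in $M$, so $H\ell M = Hy_x x M$. Injectivity: if $Hy_x x M = Hy_{x'} x' M$, then the containing $K$-$M$ double cosets agree, forcing $x = x'$; the remaining equation $y_{x'} \in H y_x \cdot x M x^{-1}$ intersected with the condition $y_x, y_{x'} \in K$ shows the relating element of $xMx^{-1}$ must lie in $xMx^{-1} \cap K$, so $y_x$ and $y_{x'}$ represent the same double coset in $H \backslash K / (xMx^{-1} \cap K)$.

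There is no real obstacle here; this is a routine double coset counting argument, analogous to the double coset formula for induced representations. The mild subtlety is keeping track of which side the conjugation lands on (i.e., $xMx^{-1} \cap K$ rather than $M \cap x^{-1}Kx$), which is dictated by which coset space we choose to act on from the left.
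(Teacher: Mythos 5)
Your proof is correct and follows essentially the same route as the paper's: both reduce to the observation that fixing $x$ forces the $K$-$M$ double coset, and then compare $y$'s via the identity $y_{x'} = h y_x (xmx^{-1})$ with $xmx^{-1} \in xMx^{-1} \cap K$. Your framing through the $K$-set isomorphism $KxM/M \cong K/(xMx^{-1} \cap K)$ is a clean conceptual packaging, but the underlying injectivity and surjectivity verifications are the same as in the paper.
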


\begin{proof}
Suppose $y_x x$ and $y_{x'} x'$ represent the
same double coset $H \backslash L / M$.
Then $y_{x'} x' = h y_x x m$ for $h \in H$ and
$m \in M$, which implies $x$ and $x'$ represent the
same $K \backslash L / M$ coset, so $x = x'$.
Now if $y_x' x = h y_x x m$ for $h \in H$ and
$m \in M$, then
$y_x' = h y_x (xmx^{-1})$; since $y_x, y_x' \in K$,
$xmx^{-1} \in xMx^{-1} \cap K$ and thus
$y_x', y_x$ represent the same
$H \backslash K/ (xMx^{-1} \cap K)$ coset.
This shows injectivity of the map
$\{y_x x\} \to H \backslash L / M$.

For surjectivity, note that
for fixed $x$, changing the representative
of $y_x$ does not affect the double coset $Hy_x x M$.
Indeed, if $y_x' = h y_x \kappa$, then $\kappa = xmx^{-1}$
and $Hy_x' xM= H y_x x M$.

Thus, what we have shown is this: the set of double cosets
$\{Hy_x x M\}$ where $x$ varies over $K \backslash L/M$
and $y_x$ varies over $H \backslash K / (xMx^{-1} \cap K)$ is equal to the
set of double cosets $\{H y x M\}$ where $y$ varies over
$K$ and $x$ varies over $K \backslash L /M$. This is
just the set of double cosets $\{H yx M\}$ where
$x, y$ vary arbitrarily in $L, K$, so we obtain
surjectivity.
\end{proof}

\begin{thm}\label{thm:boxproduct}
Let $G$ be a finite group,
$M_1, \ldots, M_N$ be a collection of $G$-Mackey functors
and write $M = M_1 \boxtimes \cdots \boxtimes M_N$. Fix a subgroup
$L \leq G$. For any subgroup $H \leq L$, define
\[
S_H^L = M_1(G/H) \otimes \cdots \otimes M_N(G/H).
\]
Then we have
\[
M(G/L) = \left( \bigoplus_{H \leq L} S_H^L \right)/F,
\]
where $F$ is the submodule generated by the Frobenius relations
\begin{align*}
&S_K^L \ni
x_1 \otimes \cdots \otimes x_{i-1} \otimes \Tr_H^K(x_i) \otimes
x_{i+1} \otimes \cdots \otimes x_N \\
&\quad = 
\Res_H^K(x_1) \otimes \cdots \otimes \Res_H^K(x_{i-1})
\otimes x_i \otimes
\Res_H^K(x_{i+1}) \otimes \cdots \otimes \Res_H^K(x_N) \in S_H^L
\end{align*}
for all $H \leq K \leq L$,
and the Weyl relations
\[
S_H^L \ni x_1 \otimes \cdots \otimes x_N =
c_\ell(x_1) \otimes \cdots \otimes c_\ell(x_N) \in
S_{\ell H\ell^{-1}}^L
\]
for all $\ell \in L$.

The isomorphisms $c_g: M(G/L) \to M(G/gLg^{-1})$
are induced on each $S_H^L$
by the maps
\[
S_H^L \ni x_1 \otimes \cdots \otimes x_N
\mapsto c_g(x_1) \otimes \cdots \otimes c_g(x_N) \in
S_{gHg^{-1}}^{gLg^{-1}}.
\]
The transfer maps $\Tr_L^{L'}: M(G/L) \to M(G/L')$
are induced by the obvious
isomorphisms $S_H^L \to S_H^{L'}$. The restriction maps
$\Res_L^{L'}: M(G/L') \to M(G/L)$ are defined on each
component $S_H^{L'}$ by the formula
\[
\Res_L^{L'}(x_1 \otimes \cdots \otimes x_N)
= \sum_{g \in L \backslash L'/ H}
c_g \left(
\Res_{H \cap g^{-1} L g}^H(x_1)\right) \otimes \cdots
\otimes c_g \left(\Res_{H \cap g^{-1} L g}^H(x_N) \right),
\]
where each $c_g\left( \Res_{H \cap g^{-1} L g}^H(x_1)\right)
\otimes \cdots
\otimes c_g\left(\Res_{H \cap g^{-1} L g}^H(x_N)\right)$
lies in $S_{gHg^{-1} \cap L}^L$.
The Dress pairing $f_L: M_1(G/L) \otimes \cdots \otimes M_N(G/L)
\to M(G/L)$ is induced by the obvious map
\[
M_1(G/L) \otimes \cdots \otimes M_N(G/L) \to S_L^L.
\]
\end{thm}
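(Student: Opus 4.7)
The approach is to show that the right-hand side $M'$, equipped with the specified structure maps, satisfies the universal property of the iterated box product expressed by the Dress pairing lemma above: namely that natural transformations $M' \to P$ to a $G$-Mackey functor $P$ are in bijection with multilinear Dress pairings $\{f_H : M_1(G/H) \otimes \cdots \otimes M_N(G/H) \to P(G/H)\}$.

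First I would verify that $M'$ is itself a $G$-Mackey functor. The transfer $\Tr_L^{L'}$ and the Weyl conjugation $c_g$ visibly preserve the Frobenius and Weyl relations, so they descend to the quotient by $F$. The restriction formula requires more care: changing the representative $g$ of the double coset $LgH$ in the sum
\[
\Res_L^{L'}(x_1 \otimes \cdots \otimes x_N) = \sum_{g \in L \backslash L'/H} c_g\bigl(\Res_{H \cap g^{-1}Lg}^H x_1\bigr) \otimes \cdots \otimes c_g\bigl(\Res_{H \cap g^{-1}Lg}^H x_N\bigr)
\]
alters each summand by a Weyl relation inside $S_{gHg^{-1} \cap L}^L$, so the formula descends. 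The main Mackey axioms to check are transitivity $\Res_L^{L''} = \Res_L^{L'}\Res_{L'}^{L''}$ and the double coset formula $\Res_K^{L'} \Tr_L^{L'} = \sum \Tr c \Res$; transitivity is precisely Lemma \ref{lem:double_coset}, matching the iterated sum indexed by pairs $(x, y_x)$ with the single sum over $H \backslash L''/M$, while the double coset formula reduces to matching summands indexed by $K \backslash L'/L$ once one notes that $\Tr_L^{L'}$ is the identity on the indexing data.

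Second, I would construct the canonical Dress pairing $f_H : M_1(G/H) \otimes \cdots \otimes M_N(G/H) \to M'(G/H)$ as the inclusion of the top summand $S_H^H$. The three Dress pairing conditions follow directly from the structure of $M'$: compatibility with restriction is the case $L = H$ of the restriction formula (whose sum collapses to one term), compatibility with $c_g$ is immediate, and the Frobenius conditions are built into the quotient by $F$. Given any Dress pairing $\{\tilde f_H\}$ into some $P$, I would construct the inverse direction of the universal property by defining $\phi : M' \to P$ on each summand via
\[
\phi_L\bigl([x_1 \otimes \cdots \otimes x_N]\bigr) = \Tr_H^L \tilde f_H(x_1 \otimes \cdots \otimes x_N)
\]
for $[x_1 \otimes \cdots \otimes x_N] \in S_H^L$. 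Well-definedness on Frobenius relations follows from axiom (iii) of the Dress pairing together with $\Tr_H^L = \Tr_K^L \Tr_H^K$; well-definedness on Weyl relations follows from axiom (ii) together with $\Tr_H^L = \Tr_{\ell H \ell^{-1}}^L c_\ell$ for $\ell \in L$. Naturality under transfer and conjugation is immediate, and naturality under restriction follows from the double coset formula applied to the Mackey functor $P$ in conjunction with axioms (i), (ii).

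The two constructions are mutually inverse: $\phi \circ f = \tilde f$ because $\Tr_H^H = \id$, and conversely any natural transformation $M' \to P$ is determined by its values on the $S_H^L$, which are pinned down by the Dress pairing $\phi \circ f$ via the $\Tr_H^L$ on the summands. The main obstacle is verifying the restriction formula and its interaction with transitivity and the double coset formula in $M'$, which is essentially a bookkeeping exercise organized by Lemma \ref{lem:double_coset}, with the Weyl relations ensuring independence from the choice of double coset representatives.
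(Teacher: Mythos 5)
Your proposal is correct and follows essentially the same route as the paper: verify that the quotient is a Mackey functor with the stated structure maps (with Lemma \ref{lem:double_coset} doing the combinatorial bookkeeping), then verify the universal property via the Dress pairing, with the inverse assignment $[x_1 \otimes \cdots \otimes x_N] \mapsto \Tr_H^L \tilde f_H(x_1 \otimes \cdots \otimes x_N)$ on $S_H^L$. One caveat: for $\Res_L^{L'}$ to descend to the quotient you must check not only independence of the double-coset representatives (a Weyl-relation argument, as you note) but also that it carries the Frobenius relations in $S_K^{L'}$ into $F$; this is the most involved computation in the paper's proof, and it uses the double coset formula in each $M_i$ together with Lemma \ref{lem:double_coset}, so it deserves explicit treatment rather than being folded into ``the formula descends.''
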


\begin{rem}
While the $S_H^L$ depend only on $H$, not $L$,
we write $S_H^L$ as a reminder that these groups
should be understood as formal expressions of the form
$\Tr_H^L(x_1 \otimes \cdots \otimes x_N)$,
where each $x_i$ is in $M_i(G/H)$.
\end{rem}

\begin{proof}
For notational simplicity, we give the proof when $N = 2$,
but the proof for arbitrary $N$ is exactly the same.
There is a long chain of straightforward
but necessary verifications
which need to be performed.

\begin{enumerate}
\item[(1)] $\Tr$, $\Res$, and $c_g$ are
well-defined.
\end{enumerate}

This is obvious for $\Tr_L^{L'}$ and straightforward for $c_g$.
Observe that $\Tr_L^{L'}$ and $\Res_L^{L'}$ preserve the
Weyl relations, since $\Res$ and $\Tr$ commute with $c_g$
for both $M_1$ and $M_2$.
Moreover, the definition of $\Res_L^{L'}$ does not
depend on the choice of double coset representatives, since
if $g' = \ell g h$ for $\ell \in L$ and $h \in H$, then
\begin{align*}
&c_{g'} \left( \Res_{H \cap {g'}^{-1} L g'}^H(x_1) \right) \otimes
c_{g'} \left( \Res_{H \cap {g'}^{-1} L g'}^H(x_2) \right) \\
&= c_\ell c_g c_h \left( \Res_{H \cap h^{-1}g^{-1} L gh}^H(x_1) \right)
\otimes c_\ell c_g c_h
\left( \Res_{H \cap h^{-1}g^{-1} L gh}^H(x_2) \right)\\
&= c_\ell c_g \left( \Res_{H \cap g^{-1} L g}^H(c_h(x_1)) \right)
\otimes c_\ell c_g
\left( \Res_{H \cap g^{-1} L g}^H(c_h(x_2)) \right)\\
&= c_g \left( \Res_{H \cap g^{-1} L g}^H(x_1) \right)
\otimes c_g
\left( \Res_{H \cap g^{-1} L g}^H(x_2) \right),
\end{align*}
where we have used the Weyl relations in the last line
and the fact that $c_h(x_i) = x_i$ for $i = 1, 2$.

The only nontrivial part is proving that $\Res_L^{L'}$ preserves
Frobenius relations. Let $H \leq K \leq L'$.
By definition,
\[
\Res_L^{L'}(x \otimes \Tr_H^K(y))
= \sum_{g \in L \backslash L' / K}
c_g \left( \Res_{K \cap g^{-1} L g }^K (x)\right) \otimes
c_g\left(\Res_{K \cap g^{-1} L g }^K \Tr_H^K (y) \right).
\]
On the other hand,
\[
\Res_L^{L'}(\Res_H^K(x) \otimes y)
= \sum_{g \in L \backslash L'/ H} c_g
\left(\Res_{H \cap g^{-1}Lg}^K(x) \right)
\otimes c_g\left( \Res_{H \cap g^{-1}Lg}^H(y) \right).
\]
By Lemma \ref{lem:double_coset}, this is equal to
\begingroup
\allowdisplaybreaks
\begin{align*}
&\sum_{s \in L \backslash L' / K}
\sum_{t \in (s^{-1}Ls \cap K) \backslash K / H}
c_sc_t
\left(\Res_{H \cap (st)^{-1}L(st)}^K(x) \right)
\otimes c_sc_t \left( \Res_{H \cap (st)^{-1}L(st)}^H(y) \right) \\
&= \sum_{s \in L \backslash L' / K}
\sum_{t \in (s^{-1}Ls \cap K) \backslash K / H}
c_s
\left(\Res_{tHt^{-1} \cap s^{-1}Ls}^K (x) \right)
\otimes c_s \left( c_t \Res_{H \cap (st)^{-1}L(st)}^H(y) \right) \\
&= \sum_{s \in L \backslash L' / K}
\sum_{t \in (s^{-1}Ls \cap K) \backslash K / H}
c_s
\left(\Res_{K \cap s^{-1}Ls}^K (x) \right)
\otimes c_s \left( \Tr_{tHt^{-1} \cap s^{-1}Ls}^{K \cap s^{-1}Ls}
c_t \Res_{H \cap (st)^{-1}L(st)}^H(y) \right) \\
&= \sum_{s \in L \backslash L' / K}
c_s \left( \Res_{K \cap s^{-1} L s }^K (x)\right) \otimes
c_s\left(\Res_{K \cap s^{-1} L s }^K \Tr_H^K (y) \right)
\end{align*}
\endgroup
which by the above is equal to $\Res_L^{L'}(x \otimes \Tr_H^K(y))$.
On the second-to-last line, we have used the fact that
\begin{align*}
&\Res_{tHt^{-1} \cap s^{-1}Ls}^K (x)
\otimes c_t \Res_{H \cap (st)^{-1}L(st)}^H(y) \\
&= \Res_{K \cap s^{-1}Ls}^K \Res_{tHt^-1 \cap s^{-1}Ls}^{K \cap s^{-1}Ls}
(x) \otimes c_t \Res_{H \cap (st)^{-1} L (st)}^H (y) \\
&= \Res_{tHt^-1 \cap s^{-1}Ls}^{K \cap s^{-1}Ls}
(x) \otimes \Tr_{tHt^{-1} \cap s^{-1}Ls}^{K \cap s^{-1}Ls}
c_t \Res_{H \cap (st)^{-1} L (st)}^H (y)
\end{align*}
by the Frobenius relations. On the last line, we have used the
fact that
\[
\Res_{K \cap s^{-1} L s}^K \Tr_H^K
= \sum_{t \in (s^{-1} L s \cap K) \backslash K / H}
\Tr_{tHt^{-1} \cap s^{-1}Ls}^{K \cap s^{-1}Ls}
c_t \Res_{H \cap (st)^{-1} L (st)}^H
\]
by the double coset formula.

\begin{enumerate}
\item[(2)] $\Tr$, $\Res$, $c_g$ are functorial.
\end{enumerate}

This is again more or less obvious by definition for
$\Tr$ and $c_g$. For $\Res$, let $L \leq L' \leq L''$,
$H \leq L''$, and suppose $x_1 \otimes \cdots
\otimes x_N \in S_H^{L''}$. By definition,
\begingroup
\allowdisplaybreaks
\begin{align*}
&\Res_L^{L'} \Res_{L'}^{L''} (x \otimes y) \\
&= \Res_L^{L'} \left(
\sum_{g \in L' \backslash L'' / H}
c_g \left( \Res_{H \cap g^{-1} L' g}^H (x) \right) \otimes
c_g \left(\Res_{H \cap g^{-1} L' g}^H(y) \right) \right) \\
&= \sum_{g \in L' \backslash L'' / H}
\sum_{t \in L \backslash L' / (gHg^{-1} \cap L')} c_t \Big(\Res_{t^{-1}Lt \cap gHg^{-1}}^{L' \cap gHg^{-1}} c_g
\Res_{H \cap g^{-1} L' g}^H (x)
\\
&\hspace{6cm}
\otimes \Res_{t^{-1}Lt \cap gHg^{-1}}^{L' \cap gHg^{-1}} c_g
\Res_{H \cap g^{-1} L' g}^H(y) \Big) \\
&= \sum_{g \in L' \backslash L'' / H}
\sum_{t \in L \backslash L' / (gHg^{-1} \cap L')}
c_{tg} \left( \Res_{H \cap (tg)^{-1} L (tg)}^H(x)
\otimes \Res_{H \cap (tg)^{-1} L (tg)}^{H}(y)\right) \\
&= \sum_{g \in L \backslash L'' / H}
c_g \left( \Res_{H \cap g^{-1} L g}^H (x) \otimes
\Res_{H \cap g^{-1} L g}^H (y) \right)
= \Res_L^{L''}(x \otimes y),
\end{align*}
\endgroup
where we have used Lemma \ref{lem:double_coset} in the last line.

\begin{enumerate}
\item[(3)] $M$ assembles into a Mackey functor
via $\Tr, \Res, c_g$.
\end{enumerate}

It is clear that $c_g$ commutes with $\Tr$.
furthermore, $c_\ell = \id$ on $M(G/L)$ by definition.
For commutation with $\Res$, let $x_1 \otimes \cdots
\otimes x_N \in S_H^{L'}$; we have
\[
c_g \Res_L^{L'}(x_1 \otimes \cdots \otimes x_N)
= \sum_{t \in L \backslash L'/ H} c_gc_t
\left( \Res_{H \cap t^{-1}Lt}^H(x_1)
\otimes \cdots \otimes \Res_{H \cap t^{-1}Lt}^H(x_N) \right),
\]
while on the other hand
\begin{align*}
&\Res_{gLg^{-1}}^{gL'g^{-1}}
(c_g(x_1) \otimes \cdots \otimes c_g(x_N)) \\
&= \sum_{t \in L \backslash L'/H}
c_{gtg^{-1}}
\left( \Res_{gHg^{-1} \cap gt^{-1}Ltg^{-1}}^{gHg^{-1}}
(c_g(x_1)) \otimes \cdots \otimes
\Res_{gHg^{-1} \cap gt^{-1}Ltg^{-1}}^{gHg^{-1}} (c_g(x_N))
\right) \\
&= \sum_{t \in L \backslash L' / H}
c_g c_t
\left( \Res_{H \cap t^{-1}Lt}^H(x_1)
\otimes \cdots \otimes \Res_{H \cap t^{-1}Lt}^H(x_N) \right)
\end{align*}
as desired.
Thus, it remains to verify the double coset formula.
Let $L, L' \leq L''$ and
$x_1 \otimes \cdots \otimes x_N \in S_H^{L'}$.
\begin{align*}
\Res_L^{L''} \Tr_{L'}^{L''}(x_1 \otimes \cdots \otimes x_N)
&= \sum_{g \in L \backslash L'' / H}
c_g \left( \Res_{H \cap g^{-1} L g}^H(x_1)
\otimes \cdots \otimes \Res_{H \cap g^{-1} L g}^H (x_N) \right),
\end{align*}
while on the other hand
\begingroup
\allowdisplaybreaks
\begin{align*}
&\sum_{g \in L \backslash L'' / L'}
\Tr_{L \cap gL'g^{-1}}^L c_g \Res_{g^{-1}Lg \cap L'}^{L'}
(x_1 \otimes \cdots \otimes x_N) \\
&= \sum_{g \in L \backslash L''/L'}
\sum_{t \in (g^{-1}Lg \cap L') \setminus L' / H}
c_g c_t \left(
\Res_{(gt)^{-1} L (gt) \cap H}^H(x_1) \otimes \cdots
\otimes \Res_{(gt)^{-1} L (gt) \cap H}^H(x_N) \right) \\
&= \sum_{g \in L \backslash L''/ H}
c_g \left( \Res_{g^{-1}Lg \cap H}^H (x_1)
\otimes \cdots \otimes \Res_{g^{-1}Lg \cap H}^H (x_N) \right),
\end{align*}
\endgroup
using Lemma \ref{lem:double_coset} again. Thus,
$M$ assembles into a Mackey functor.

\begin{enumerate}
\item[(4)] $M \cong M_1 \boxtimes \cdots \boxtimes M_N$.
\end{enumerate}

We verify that $M$ has the universal property of
$M_1 \boxtimes \cdots \boxtimes M_N$ with respect to the proposed
pairing $f_L: M_1(G/L) \otimes \cdots \otimes M_N(G/L) \to M(G/L)$.
The fact that the $f_L$ do indeed assemble into a Dress pairing
is enforced by the Frobenius and Weyl
relations. Given any Dress pairing
$g_L: M_1(G/L) \otimes \cdots \otimes M_N(G/L) \to P(G/L)$,
we define $h_L: M(G/L) \to P(G/L)$ by setting
\[
h_L(x_1 \otimes \cdots \otimes x_N)
= \Tr_H^L( g_H(x_1 \otimes \cdots \otimes x_N))
\]
whenever $x_1 \otimes \cdots \otimes x_N \in S_H^L$.
Conversely, given a morphism of Mackey functors $h: M \to P$,
the Dress pairing $g_L: M_1(G/L) \otimes \cdots \otimes M_N(G/L)
\to P(G/L)$ is recovered via the composition
$g_L = h_L \circ f_L$, where $h_L: M(G/L) \to P(G/L)$ is the
$L$-level of $h$.
\end{proof}

The formula simplifies slightly in the abelian case:

\begin{cor}
Let $G$ be a finite abelian group,
$M_1, \ldots, M_N$ be a collection of $G$-Mackey functors
and write $M = M_1 \boxtimes \cdots \boxtimes M_N$. Fix a subgroup
$L \leq G$. For any subgroup $H \leq L$, define
\[
S_H^L = (M_1(G/H) \otimes \cdots \otimes M_N(G/H))/(L/H).
\]
Then we have
\[
M(G/L) = \left( \bigoplus_{H \leq L} S_H^L \right)/F,
\]
where $F$ is the submodule generated by the Frobenius relations
\begin{align*}
&S_K^L \ni
x_1 \otimes \cdots \otimes x_{i-1} \otimes \Tr_H^K(x_i) \otimes
x_{i+1} \otimes \cdots \otimes x_N \\
&\quad = 
\Res_H^K(x_1) \otimes \cdots \otimes \Res_H^K(x_{i-1})
\otimes x_i \otimes
\Res_H^K(x_{i+1}) \otimes \cdots \otimes \Res_H^K(x_N) \in S_H^L
\end{align*}
For all $H \leq K \leq L$.
The transfer, restriction, and conjugation homomorphisms
are defined analogously.
\end{cor}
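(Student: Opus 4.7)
The plan is to deduce this corollary directly from Theorem \ref{thm:boxproduct} by exploiting the simplifications available when $G$ is abelian. The key observation is that if $G$ is abelian, then $\ell H \ell^{-1} = H$ for all $\ell \in L$ and $H \leq L$, so the Weyl relations in the general theorem become \emph{internal} to each summand $M_1(G/H) \otimes \cdots \otimes M_N(G/H)$, reading
\[
x_1 \otimes \cdots \otimes x_N = c_\ell(x_1) \otimes \cdots \otimes c_\ell(x_N)
\]
rather than identifying elements across distinct subgroups. Since $c_h$ acts as the identity on $M_i(G/H)$ whenever $h \in H$, this $L$-action factors through $L/H$, and quotienting by these Weyl relations yields precisely the coinvariant module $(M_1(G/H) \otimes \cdots \otimes M_N(G/H))/(L/H)$ that appears in the statement.

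With this observation in hand, I would argue that the submodule $F$ in Theorem \ref{thm:boxproduct} decomposes as the sum of a ``Weyl part,'' now internal to each summand, and a ``Frobenius part,'' which mixes pairs of summands. Consequently, quotienting by both relations simultaneously agrees with first taking $L/H$-coinvariants on each summand to obtain the new $S_H^L$, and then quotienting by the Frobenius relations; this yields the stated description of $M(G/L)$.

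Finally, I would check that the structure maps of Theorem \ref{thm:boxproduct} descend to this smaller quotient. The transfer map $S_H^L \to S_H^{L'}$, the conjugation maps $c_g$, and the Dress pairing all act summand-by-summand and manifestly commute with the $L/H$-coinvariant quotient, so they descend automatically. The step requiring real care is restriction: in the abelian case the sum over double cosets $L \backslash L'/ H$ collapses to a sum over single cosets of $LH$ in $L'$, and one must verify that different representatives of the same coset produce terms agreeing modulo the coinvariant quotient on the target. The main obstacle is exactly this verification, but it is forced by the Weyl identification together with the fact that $L \cap gHg^{-1} = L \cap H$ in the abelian setting, so the formula for $\Res$ descends unambiguously and the proof is complete.
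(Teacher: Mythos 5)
Your proposal is correct and matches the paper's intent exactly: the corollary is stated there as an immediate specialization of Theorem \ref{thm:boxproduct}, with the Weyl relations becoming internal to each summand (hence absorbed into the $L/H$-coinvariants) precisely as you describe. Your additional verification that the restriction formula descends to the coinvariant quotient is the right detail to check and is carried out correctly.
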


\begin{rem}
The formula of Theorem \ref{thm:boxproduct} generalizes the
inductive description of $T \boxtimes R$ for two
$C_{p^n}$-Tambara functors given in
\cite{MR3187614} to all finite $G$.
\end{rem}

\begin{rem}
When $T_1, \ldots, T_N$ are Tambara functors, the universal
morphisms $T_i \to T = T_1 \boxtimes \cdots \boxtimes T_N$
exhibiting $T$ as a coproduct of the $T_i$
are given by the compositions
\[
T_i(G/L) \xrightarrow{g_i} T_1(G/L)
\otimes \cdots \otimes T_N(G/L) \xrightarrow{f_L} T(G/L),
\]
where $g_i: T_i(G/L) \to T_1(G/L)
\otimes \cdots \otimes T_N(G/L)$
is the map sending $x$ to $1 \otimes \cdots \otimes 1 \otimes
x \otimes 1 \otimes \cdots \otimes 1$, i.e. inclusion into the
$i$\textsuperscript{th} factor. In the special case that
$T_i = \A[G/H_i]$ and $T = \boxtimes_{i=1}^N \A[G/H_i]
= \A[X]$, the universal morphisms $T_i \to T$ are induced
by $R_{g_i} \in \bispan_G(X, G/H_i)$, where
$g_i: G/H_i \to X$ is the canonical inclusion (follows from
an application of the Yoneda lemma).
\end{rem}

\begin{rem}
Let $T$ be a Tambara functor and let $R, S$ be $T$-algebras.
The box product $R \boxtimes_T S$ is given by modifying
the definition of each $S_H^L$ to be tensor products over
$T(G/H)$.
\end{rem}

\subsection{Extending Finiteness Criteria to
Non-Transitive Sets}

\begin{prop}\label{prop:boxproductfinite}
Let $G$ be a finite group.
If $T, R$ are levelwise finitely generated
$G$-Tambara functors, then so is $T \boxtimes R$.
\end{prop}

\begin{proof}
Fix an $L \leq G$. We claim that the image of
each $S_H^L$ in $(T \boxtimes R)(G/L)$ is a finitely
generated $S_L^L$-module. Since
$S_L^L \cong T(G/L) \otimes R(G/L)$ as a ring,
which is finitely generated, this will imply
$(T \boxtimes R)(G/L)$ is finitely generated.
Indeed, the elements of $S_H^L$ are all of the form
$\Tr_H^L(x \otimes y)$ where $x \otimes y \in S_H^H$.
$S_L^L$ acts by
\[
(x' \otimes y') \cdot \Tr_H^L(x \otimes y)
= \Tr_H^L (\Res_H^L(x' \otimes y') \cdot (x \otimes y))
= \Tr_H^L ( \Res_H^L(x')x \otimes \Res_H^L(y') y ).
\]
In other words, the image of $S_H^L$ is a quotient
of the $S_L^L$-module
$T(G/H) \otimes R(G/H)$, with $S_L^L$-action given by
the restriction map
\[
\Res_H^L \otimes \Res_H^L:
T(G/L) \otimes T(G/L) \to T(G/H) \otimes R(G/H).
\]
Since $T, R$ are levelwise finitely generated
and the maps $\Res_H^L: T(G/L) \to T(G/H)$ are
integral (Lemma \ref{lem:integral}), they are finite ring maps, whence
$T(G/L) \otimes T(G/L) \to T(G/H) \otimes R(G/H)$
is a finite ring map and thus $T(G/H) \otimes R(G/H)$
is a finite $S_L^L$-module, from which the result follows.
\end{proof}

\begin{rem}
The same proof can be extended to show that
if $S$ is a $G$-Tambara functor and $T, R$ are $S$-algebras,
levelwise finitely generated over $S$, then
so is $T \boxtimes_S R$.
\end{rem}

\begin{cor}\label{cor:transitively_implies_Tambara}
Let $G$ be a transitively Tambara finite group. Then $G$ is also
Tambara finite.
\end{cor}

\begin{proof}
This follows from Proposition
\ref{prop:boxproductfinite}
and the isomorphism $\A[X \sqcup Y] \cong \A[X]
\boxtimes \A[Y]$.
\end{proof}

\begin{ex}\label{ex:green_example}
We remark that Proposition \ref{prop:boxproductfinite}
is not immediate from the formula in Theorem $\ref{thm:boxproduct}$,
since it is is false for Green functors.
Let $R$ be the $C_p$-Green functor given by the Lewis diagram
\[
\begin{tikzcd}
\F_p \ar[d, xshift=-3pt, "\Res_e^{C_p}", swap] \\
\F_p[x] \ar[u, xshift=3pt, swap, "\Tr_e^{C_p}"]
\ar[in=290, out=250, loop, looseness=4, "\id", swap]
\end{tikzcd}
\]
where $\Res_e^{C_p}: \F_p \to \F_p[x]$ is the standard
inclusion and $\Tr_e^{C_p} = 0$. Clearly $R$ is a levelwise
finite-type $\Z$-algebra.
$R \boxtimes R$ is given by
\[
\begin{tikzcd}
(\F_p \ar[d, xshift=-3pt, "\Res_e^{C_p}", swap] \oplus
\F_p[x, y])/F\\
\F_p[x, y] \ar[u, xshift=3pt, swap, "\Tr_e^{C_p}"]
\ar[in=290, out=250, loop, looseness=4, "\id", swap]
\end{tikzcd}
\]
The Frobenius relations enforce
$0 = c \otimes \Tr(y^k) = \Tr(c \otimes y^k)$ and
$0 = \Tr(x^k) \otimes c = \Tr(x^k \otimes c)$
for $k \geq 0$,
and multiplication
\[
\Tr(x^{n_1} \otimes y^{m_1})
\Tr(x^{n_2} \otimes y^{m_2})
= \Tr \left( (x^{n_1} \otimes y^{m_1})
\Res \Tr(x^{n_2} \otimes y^{m_2}) \right)
= 0
\]
Hence the top level $(R \boxtimes R)(C_p/C_p)$ is the ring
consisting of elements
\[
(n, p(x, y))
\]
where $n \in \F_p$ and $p(x, y)$ is a polynomial with
$\F_p$-coefficients having no terms of the form
$cx^k, cy^k$, $k \geq 0$. Multiplication is given by
$(n, p(x, y))(m, q(x, y)) = (nm, nq(x, y) + mp(x, y))$.
In particular, the top level $(R \boxtimes R)(C_p/C_p)$ is not
a finitely generated ring.
\end{ex}

While
levelwise finite generation is too much to ask
for a box product of levelwise finitely generated
Green functors (see Example \ref{ex:green_example}),
relative finite-dimensionality is not:

\begin{lem}\label{lem:relatively_finite_green}
Let $T, R$ be relatively finite dimensional Green functors.
Then $T \boxtimes R$ is also relatively
finite-dimensional.
\end{lem}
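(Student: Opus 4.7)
The plan is to adapt the argument of Corollary \ref{cor:boxproductfinite} to accommodate the restriction map. Fix subgroups $L \leq L' \leq G$. By Theorem \ref{thm:boxproduct} applied to the underlying Mackey structure, $(T \boxtimes R)(G/L) = (\bigoplus_{H \leq L} S_H^L)/F$, so it suffices to show each $S_H^L$ is a finite $(T \boxtimes R)(G/L')$-module via $\Res_L^{L'}$.

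First I would note that the restriction formula of Theorem \ref{thm:boxproduct} simplifies dramatically on the component $S_{L'}^{L'}$: since $L \leq L'$ forces $L \backslash L' / L'$ to consist of a single double coset (represented by $e$), we get
\[
\Res_L^{L'}(f_{L'}(x' \otimes y')) = f_L(\Res_L^{L'}(x') \otimes \Res_L^{L'}(y')) \in S_L^L
\]
for all $x' \otimes y' \in T(G/L') \otimes R(G/L')$. Using Frobenius reciprocity in the Green functor $T \boxtimes R$ together with the fact that each Dress pairing $f_H$ is a ring map, I would then compute that
\[
\Res_L^{L'}(f_{L'}(x' \otimes y')) \cdot \Tr_H^L(f_H(x \otimes y))
= \Tr_H^L(f_H(\Res_H^{L'}(x') \cdot x \otimes \Res_H^{L'}(y') \cdot y))
\]
for all $H \leq L$ and $x \otimes y \in T(G/H) \otimes R(G/H)$.

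This shows the $(T \boxtimes R)(G/L')$-action on $S_H^L$ factors through the natural $T(G/L') \otimes R(G/L')$-module action on $T(G/H) \otimes R(G/H)$ via $\Res_H^{L'} \otimes \Res_H^{L'}$. Since $T$ and $R$ are relatively finite-dimensional, $T(G/H)$ is a finite $T(G/L')$-module and $R(G/H)$ is a finite $R(G/L')$-module, so $T(G/H) \otimes R(G/H)$ is a finite $T(G/L') \otimes R(G/L')$-module. Hence each $S_H^L$, being a quotient of it, is a finite $(T \boxtimes R)(G/L')$-module via $f_{L'}$ composed with $\Res_L^{L'}$; summing over the finitely many $H \leq L$ yields that $(T \boxtimes R)(G/L)$ is finite over $(T \boxtimes R)(G/L')$.

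The main obstacle is essentially the bookkeeping step of showing that $\Res_L^{L'}$ sends $S_{L'}^{L'}$ cleanly into $S_L^L$ and tracking how Frobenius reciprocity then forces the action to factor through the tensor product $\Res_H^{L'} \otimes \Res_H^{L'}$. Once this compatibility is established, the argument mirrors Corollary \ref{cor:boxproductfinite} almost verbatim, and the proof goes through with no need for the Tambara (norm) structure.
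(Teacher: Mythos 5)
Your proposal is correct and follows essentially the same route as the paper's proof: both use the decomposition of Theorem \ref{thm:boxproduct}, Frobenius reciprocity, and the finiteness of $\Res_H^{L'} \otimes \Res_H^{L'}$ coming from the relative finite-dimensionality of $T$ and $R$. The only (immaterial) difference is that the paper factors the argument through the intermediate ring $T(G/L)\otimes R(G/L)$ before composing with the finite map from $T(G/L')\otimes R(G/L')$, whereas you go from level $L'$ to $H$ directly.
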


\begin{proof}
Let $L \leq L'$.
As in the proof of Proposition \ref{prop:boxproductfinite},
each $S_H^L$ is a finite module over $S_L^L =
T(G/L) \otimes R(G/L)$, since the $S_L^L$-module structure
on $S_H^L = T(G/H) \otimes R(G/H)$ is induced by
$\Res_H^L \otimes \Res_H^L: T(G/L) \otimes R(G/L)
\to T(G/H) \otimes R(G/H)$ which is a finite ring map
by assumption. Thus,
$(T \boxtimes R)(G/L)$ is a finite
$T(G/L) \otimes R(G/L)$-module. Since
$\Res_L^{L'}: T(G/L') \otimes R(G/L')
\to T(G/L) \otimes R(G/L)$ is a finite ring map,
we therefore see that $\Res_L^{L'}: T(G/L') \otimes R(G/L')
\to (T \boxtimes R)(G/L)$ is a finite ring map,
and hence $\Res_L^{L'}:
(T \boxtimes R)(G/L') \to (T \boxtimes R)(G/L)$ is as well.
\end{proof}

We also remark that with relative finite-dimensionality
hypotheses, ``levelwise finite generation'' is stable
under base change:

\begin{lem}
Let $T$ be a relatively finite-dimensional Tambara functor,
and suppose $R$ is levelwise finitely generated (over $\A$). Then
$T \boxtimes R$ is levelwise finitely generated over $T$.
\end{lem}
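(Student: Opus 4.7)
The plan is to mirror the proof strategy of Corollary \ref{cor:boxproductfinite}, but now tracking finiteness relative to $T(G/L)$ rather than $\Z$. Fix $L \leq G$. By Theorem \ref{thm:boxproduct}, $(T \boxtimes R)(G/L)$ is a quotient of $\bigoplus_{H \leq L} S_H^L$, where $S_H^L = T(G/H) \otimes R(G/H)$, and the image of each $S_H^L$ inside $(T \boxtimes R)(G/L)$ is a module over $S_L^L = T(G/L) \otimes R(G/L)$ via the ring map $\Res_H^L \otimes \Res_H^L$. Since $L$ has only finitely many subgroups, it suffices to show two things: first, that $S_L^L$ is a finite type $T(G/L)$-algebra; and second, that each $S_H^L$ is a finite $S_L^L$-module under the action above. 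Chaining these together will give that $(T \boxtimes R)(G/L)$ is finite over $S_L^L$ and hence finite type over $T(G/L)$.

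The first point is immediate: since $R$ is levelwise finitely generated, $R(G/L)$ is a finite type $\Z$-algebra, and so $S_L^L = T(G/L) \otimes R(G/L)$ is a finite type $T(G/L)$-algebra. For the second point, we need $\Res_H^L \otimes \Res_H^L$ to be a finite ring map, which holds iff both $\Res_H^L: T(G/L) \to T(G/H)$ and $\Res_H^L: R(G/L) \to R(G/H)$ are finite. The first is finite by the relative finite-dimensionality hypothesis on $T$.

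The main (and only nontrivial) step is the observation that $\Res_H^L: R(G/L) \to R(G/H)$ is finite. This does \emph{not} follow from relative finite-dimensionality of $R$ (which we have not assumed). Instead, we use the fact that $R(G/H)$ is a finite type $\Z$-algebra, hence a finite type $R(G/L)$-algebra via $\Res_H^L$; combined with Lemma \ref{lem:integral}, which ensures that $\Res_H^L$ is integral, we conclude $R(G/H)$ is a finite $R(G/L)$-module. This is the key trick, and it is exactly what forced the restriction maps on $R$ to behave well without a separate relative finite-dimensionality assumption.

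Putting the pieces together: $\Res_H^L \otimes \Res_H^L$ is a finite ring map, so the image of $S_H^L$ in $(T \boxtimes R)(G/L)$ is a finite $S_L^L$-module. Summing over the finitely many subgroups $H \leq L$ shows $(T \boxtimes R)(G/L)$ is finite over $S_L^L$, and since $S_L^L$ is itself finite type over $T(G/L)$, we conclude that $(T \boxtimes R)(G/L)$ is a finite type $T(G/L)$-algebra, completing the proof.
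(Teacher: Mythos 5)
Your proposal is correct and follows essentially the same route as the paper: finiteness of $\Res_H^L \otimes \Res_H^L$ (the $T$-factor by relative finite-dimensionality, the $R$-factor by combining finite type over $\Z$ with integrality from Lemma \ref{lem:integral}), hence finiteness of each $S_H^L$ over $S_L^L$, together with $S_L^L$ being finite type over $T(G/L)$. The paper's version is terser, deferring these points to the proof of Corollary \ref{cor:boxproductfinite}, but the content is identical.
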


\begin{proof}
Let $L \leq G$. For each $H \leq L$, we have that
the ring map $\Res_H^L \otimes \Res_H^L:
T(G/L) \otimes R(G/L) \to T(G/H) \otimes R(G/H)$ is finite,
from which we deduce (as in the proof of Proposition
\ref{prop:boxproductfinite}) that the ring map
$T(G/L) \otimes R(G/L) \to (T \boxtimes R)(G/L)$
is finite. Since $R(G/L)$ is a finite-type $\Z$-algebra,
$T(G/L) \otimes R(G/L)$ is a finite-type $T(G/L)$-algebra,
hence $(T \boxtimes R)(G/L)$ is a finite-type $T(G/L)$-algebra.
\end{proof}

\begin{thm}\label{thm:relatively_finite_general}
Let $G$ be a finite group and let $X$ be a finite $G$-set.
Then $\A[X]$ is relatively
finite-dimensional.
\end{thm}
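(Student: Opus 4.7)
The plan is to reduce to the transitive case (Lemma \ref{lem:relatively_finite}) via the fact that $\A[-]$ carries disjoint unions of $G$-sets to box products of Tambara functors, and then invoke Lemma \ref{lem:relatively_finite_green}, which says that box products preserve relative finite-dimensionality.

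First, I would decompose $X$ into its orbits, writing $X = \coprod_{i=1}^N G/H_i$ for suitable subgroups $H_i \leq G$. Since $\A[-]: G\text{-}\cat{Set} \to G\text{-}\cat{Tamb}$ is left adjoint to the forgetful functor $T \mapsto T(-)$ (it is defined as the additive completion of the representable $\bispan_G(X, -)$), it carries coproducts of $G$-sets to coproducts in $G\text{-}\cat{Tamb}$. As the box product is the coproduct in Tambara functors, this yields a natural isomorphism
\[
\A[X] \;\cong\; \A[G/H_1] \boxtimes \cdots \boxtimes \A[G/H_N].
\]

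Next, by Lemma \ref{lem:relatively_finite}, each factor $\A[G/H_i]$ is relatively finite-dimensional. Since every Tambara functor is in particular a Green functor, we may iterate Lemma \ref{lem:relatively_finite_green}: if $T$ and $R$ are relatively finite-dimensional, then so is $T \boxtimes R$. A straightforward induction on $N$ then shows that $\A[X]$ is relatively finite-dimensional, completing the proof.

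The main obstacle here is essentially nonexistent once the box product decomposition is in hand; the work has been done in establishing the transitive case (Lemma \ref{lem:relatively_finite}) and the stability of relative finite-dimensionality under $\boxtimes$ (Lemma \ref{lem:relatively_finite_green}). The only real substantive point is the identification $\A[X] \cong \bigboxtimes_i \A[G/H_i]$, which is a formal consequence of the universal property defining $\A[X]$.
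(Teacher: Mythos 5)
Your proposal is correct and is precisely the paper's argument: decompose $X$ into orbits, use $\A[X \sqcup Y] \cong \A[X] \boxtimes \A[Y]$, and combine Lemma \ref{lem:relatively_finite} (the transitive case) with Lemma \ref{lem:relatively_finite_green} (stability of relative finite-dimensionality under $\boxtimes$). The paper states this in one line; you have simply spelled out the same reduction.
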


\begin{proof}
Follows from Lemma \ref{lem:relatively_finite_green},
Lemma \ref{lem:relatively_finite}, and the fact that
$\A[X \sqcup Y] \cong \A[X] \boxtimes \A[Y]$.
\end{proof}




We showed that all finite Dedekind groups
are transitively Tambara finite in \S\ref{sec:finite_generation}.

\begin{thm}\label{thm:finite_generation_general}
Let $G$ be a finite Dedekind group.
Then $G$ is Tambara finite. In particular, since
$\A[X]$ is a levelwise finitely generated Tambara functor
for any finite $G$-set $X$, it is also levelwise Noetherian
and relatively finite-dimensional.
\end{thm}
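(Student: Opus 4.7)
The plan is to reduce the general case to the transitive case, which was established in Sections \ref{sec:finite_generation} and \ref{sec:special_cases}, and then glue using the box product formalism developed in this section.

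First I would write $X$ as a disjoint union of orbits, $X \cong \coprod_{i=1}^n G/H_i$. Since $\A[-]$ is a left adjoint, it carries coproducts of $G$-sets to coproducts in the category of $G$-Tambara functors, and coproducts of Tambara functors are realized by the box product $\boxtimes$. Concretely, this gives an isomorphism
\[
\A[X] \;\cong\; \A[G/H_1] \boxtimes \cdots \boxtimes \A[G/H_n].
\]
This is exactly the same identity used in the proof of Theorem \ref{thm:relatively_finite_general}.

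Next I would invoke the hypothesis $(\dagger)$: for $G$ a Dedekind group (Theorem \ref{thm:finite_generation}), for $G$ satisfying condition (2) of $(\dagger)$, and for $G \cong D_8$ (Section \ref{sec:special_cases}), each factor $\A[G/H_i]$ is a levelwise finitely generated Tambara functor. Now apply Corollary \ref{cor:boxproductfinite} to the pair $\A[G/H_1] \boxtimes \A[G/H_2]$ to conclude it is levelwise finitely generated; then iterate, boxing on one factor at a time, to obtain the full $n$-fold box product. A straightforward induction on $n$ gives that $\A[X]$ is levelwise finitely generated. The parenthetical consequences follow immediately: module-Noetherianity is a standard fact about levelwise finitely generated rings over $\Z$, and relative finite-dimensionality is already known from Theorem \ref{thm:relatively_finite_general}.

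There is essentially no obstacle: all of the difficulty has been absorbed into the transitive case (whose combinatorial proofs occupy Sections \ref{sec:finite_generation} and \ref{sec:special_cases}) and into Corollary \ref{cor:boxproductfinite}, which packages the levelwise grading and relative finite-dimensionality machinery of this section. The only mild subtlety worth mentioning is that the box product identity $\A[X \sqcup Y] \cong \A[X] \boxtimes \A[Y]$ relies on $\A[-]$ being left adjoint to the forgetful functor $T \mapsto T(X)$, so this step should be cited explicitly rather than taken for granted.
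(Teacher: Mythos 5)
Your proposal is correct and is essentially identical to the paper's argument: decompose $X$ into orbits, use $\A[X \sqcup Y] \cong \A[X] \boxtimes \A[Y]$, invoke the transitive-case results from \S\ref{sec:finite_generation} and \S\ref{sec:special_cases}, and iterate Corollary \ref{cor:boxproductfinite}. No issues.
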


\begin{proof}
Follows from Corollary \ref{cor:transitively_implies_Tambara}.
\end{proof}



\begin{thm}
Let $G$ be a Tambara finite and let $X$ be a finite $G$-set.
Then $\A[X]$ is module-Noetherian, in the sense that
every submodule of a finitely generated module over $\A[X]$
is finitely generated.
In particular, this holds for all finite groups
satisfying $(\dagger)$.
\end{thm}

This is a consequence of Theorem
\ref{thm:finite_generation_general}
and a more general fact
about relatively finite-dimensional Green functors:

\begin{lem}[{\cite[Corollary 3.35]{chanwisdom2025algebraicktheory}}]
Let $R$ be a relatively finite-dimensional Green functor.
Then $R$ is module-Noetherian iff $R(G/G)$ is Noetherian
iff $R(G/H)$ is Noetherian for all $H \leq G$.
\end{lem}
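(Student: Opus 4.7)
The plan is to establish the three-way equivalence by proving the cycle $(C) \Rightarrow (B) \Rightarrow (A) \Rightarrow (C)$, where $(A)$ denotes module-Noetherianness of $R$, $(B)$ denotes Noetherianness of $R(G/G)$, and $(C)$ denotes Noetherianness of $R(G/H)$ for all $H \leq G$. The implication $(C) \Rightarrow (B)$ is trivial by taking $H = G$; the reverse $(B) \Rightarrow (C)$ is the classical commutative-algebraic fact that a module-finite algebra over a Noetherian ring is Noetherian, applied to the finite ring map $\Res_H^G: R(G/G) \to R(G/H)$ furnished by relative finite-dimensionality.

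The main step is $(B) \Rightarrow (A)$; given the previous equivalence, we may assume all $R(G/H)$ are Noetherian. The key observation is that every finitely generated $R$-module $M$ is levelwise finitely generated over each $R(G/K)$. Indeed, $M$ is a quotient of a finite direct sum of free $R$-modules on generators at various $G/H_i$, so it suffices to check this for a free module, which at level $G/K$ decomposes by the double coset formula into a finite sum of copies of $R(G/L)$ for various $L$ conjugate to subgroups of $K$. Each such $R(G/L)$ is a finite $R(G/K)$-module by relative finite-dimensionality (after composing $\Res$ with a conjugation isomorphism). Hence each $M(G/K)$ is a Noetherian $R(G/K)$-module, and so for any sub-$R$-module $N \subseteq M$, each level $N(G/K)$ is finitely generated over $R(G/K)$. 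Since $G$ has finitely many subgroups, the union of these generating sets over all $K \leq G$ yields a finite set that generates $N$ as a sub-$R$-module.

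For the last implication $(A) \Rightarrow (B)$, given an ideal $I \subseteq R(G/G)$, form the sub-$R$-module $\langle I \rangle \subseteq R$ generated by $I$. I claim $\langle I \rangle(G/G) = I$: indeed, any element obtained from $I$ by $R$-multiplication, restriction, and then transfer, when evaluated at the top level, simplifies by Frobenius reciprocity since $\Tr_K^G(r \cdot \Res_K^G(i)) = i \cdot \Tr_K^G(r) \in I$, and iterated transfers collapse by functoriality. By module-Noetherianness, $\langle I \rangle$ has finitely many generators $a_1, \ldots, a_m$ at various levels $G/K_j$; applying transfers to project each into $G/G$ and invoking Frobenius once more expresses $I$ as an $R(G/G)$-ideal generated by finitely many elements.

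The main obstacle is the implication $(B) \Rightarrow (A)$, where one must correctly handle the double coset decomposition of free $R$-modules and confirm their levelwise finiteness over each $R(G/K)$; the other two steps are standard bookkeeping with Frobenius reciprocity and classical commutative algebra.
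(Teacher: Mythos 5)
The paper does not prove this lemma at all---it is quoted verbatim from Chan--Wisdom, \cite[Corollary 3.35]{chanwisdom2025algebraicktheory}---so there is no internal proof to compare against; your proposal is a self-contained reconstruction, and it is essentially correct. The two substantive steps are handled with the right ideas: for $(B)\Rightarrow(A)$ you correctly reduce to showing that a free $R$-module on a generator at level $G/H$ is levelwise module-finite over $R(G/K)$, via the decomposition $R(G/H\times G/K)\cong\bigoplus_{g\in H\backslash G/K}R(G/(H^g\cap K))$ with $R(G/K)$ acting through $\Res_{H^g\cap K}^K$, which is finite by relative finite-dimensionality; and the passage from levelwise finite generation of a submodule $N$ to finite generation of $N$ as an $R$-module is valid because the sub-$R$-module generated by the union of level-$K$ generators already contains all of $N(G/K)$ at each of the finitely many levels.

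The one place you should tighten is the end of $(A)\Rightarrow(B)$. Your key identity $\langle I\rangle(G/G)=I$ (via Frobenius reciprocity, $\Tr_K^G(r\cdot\Res_K^G(i))=\Tr_K^G(r)\,i\in I$) is right, but the phrase ``applying transfers to project each $a_j$ into $G/G$'' does not by itself produce ideal generators for $I$: the $a_j$ live at lower levels, and an arbitrary element of $I$ is a sum of terms $\Tr_L^G(r\cdot\rho(a_j))$ for composites $\rho$ of restrictions and conjugations, not merely $R(G/G)$-multiples of $\Tr_{K_j}^G(a_j)$. To finish, either (i) unwind each fixed generator $a_j$ into the finitely many elements $i_{j,k}\in I$ used to express it, and check via the double coset formula and Frobenius that every $\Tr_L^G(r\cdot\rho(a_j))$ lies in the ideal generated by the $i_{j,k}$; or, more cleanly, (ii) run the argument with ascending chains: given $I_1\subseteq I_2\subseteq\cdots$ in $R(G/G)$, the chain $\langle I_n\rangle$ of submodules of $R$ stabilizes by module-Noetherianness, and evaluating at $G/G$ using $\langle I_n\rangle(G/G)=I_n$ shows the original chain stabilizes. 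With that repair the argument is complete.
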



Tambara finite groups also satisfy a weak Hilbert basis theorem:

\begin{thm}[Weak Hilbert Basis Theorem]
\label{thm:hilbert_basis}
Let $G$ be a Tambara finite group, and suppose
$T$ is levelwise Noetherian and relatively finite-dimensional.
Then $T[X]$ is also levelwise Noetherian and relatively
finite-dimensional for all finite $G$-sets $X$. If $T$
is levelwise finitely generated, then so is $T[X]$.
\end{thm}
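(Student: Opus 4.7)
The plan is to leverage the box-product decomposition from Theorem~\ref{thm:boxproduct}, together with the relative finite-dimensionality and levelwise finite generation of $\A[X]$ already established under $(\dagger)$. Throughout, write $T[X] = T \boxtimes \A[X]$ and, for each $L \leq G$, let $B_L = T(G/L) \otimes_\Z \A[X](G/L)$, so that the Dress pairing supplies a ring map $B_L \to T[X](G/L)$.

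The relative finite-dimensionality of $T[X]$ is immediate from what we have in hand: $\A[X]$ is relatively finite-dimensional by Theorem~\ref{thm:relatively_finite_general}, and $T$ is so by hypothesis, so Lemma~\ref{lem:relatively_finite_green} applies directly.

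For the Noetherian claim, the key observation is that $B_L$ is itself Noetherian: $T(G/L)$ is Noetherian by hypothesis, and $\A[X](G/L)$ is a finite type $\Z$-algebra under $(\dagger)$ by Theorem~\ref{thm:finite_generation_general}, so $B_L$ is a finite type $T(G/L)$-algebra and the classical Hilbert basis theorem applies. Next, following the proof of Corollary~\ref{cor:boxproductfinite}, since both $T$ and $\A[X]$ have finite restriction maps, each summand $S_H^L$ from Theorem~\ref{thm:boxproduct} maps into $T[X](G/L)$ as a quotient of the finite $B_L$-module $T(G/H) \otimes \A[X](G/H)$ (where the $B_L$-action factors through the finite ring map $\Res_H^L \otimes \Res_H^L$). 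Summing over the finitely many $H \leq L$, $T[X](G/L)$ is a finite $B_L$-module. Since ideals of $T[X](G/L)$ are in particular $B_L$-submodules, the ACC descends and $T[X](G/L)$ is Noetherian as a ring.

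For the final statement, when $T$ is levelwise finitely generated, the lemma immediately preceding Theorem~\ref{thm:relatively_finite_general} gives that $T[X]$ is levelwise finitely generated over $T$, i.e.\ each $T[X](G/L)$ is a finite type $T(G/L)$-algebra; composing with the finite-type-ness of $T(G/L)$ over $\Z$ then yields levelwise finite generation of $T[X]$. I expect the only mildly delicate step to be the Noetherian part, where one must verify that the ring structure on $T[X](G/L)$ coming from the box product is compatible with its $B_L$-module structure well enough for ACC on $B_L$-submodules to force ACC on ring ideals; but this compatibility is guaranteed by the Dress pairing being a ring map.
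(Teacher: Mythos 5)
Your proposal is correct and follows essentially the same route as the paper: relative finite-dimensionality via Lemma~\ref{lem:relatively_finite_green}, finiteness of the map $T(G/L) \otimes \A[X](G/L) \to T[X](G/L)$ via the argument of Corollary~\ref{cor:boxproductfinite}, and finite generation via the box-product machinery. The only cosmetic difference is where the classical Hilbert Basis Theorem is invoked: the paper concludes $T[X](G/L)$ is finite type over $T(G/L)$ and applies HBT there, whereas you apply HBT to $B_L$ first and then transfer Noetherianity along the module-finite ring map $B_L \to T[X](G/L)$ — but these are interchangeable steps and your resolution of the ideal-vs-submodule compatibility worry is exactly right.
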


\begin{proof}
$T[X]$ is relatively finite-dimensional by
Lemma \ref{lem:relatively_finite_green}. For the levelwise
Noetherian statement, observe as in the proof of
Proposition \ref{prop:boxproductfinite} that each ring map
$T(G/L) \otimes \A[X](G/L) \to T[X](G/L)$ is finite;
since $T(G/L) \otimes \A[X](G/L)$ is a finite-type
$T(G/L)$-algebra by Theorem \ref{thm:finite_generation_general},
so is $T[X](G/L)$, whence $T[X](G/L)$ is Noetherian.
The levelwise finitely generated part of the statement
follows from Theorem \ref{thm:finite_generation_general} and
Proposition \ref{prop:boxproductfinite}.
\end{proof}

\subsection{Stability of the Grading}

To conclude this section, we provide proofs to
some earlier remarks about the
stability of the grading we have introduced on
polynomials under box products.

\begin{cor}
Let $G$ be a finite group.
The gradings on $\A[G/H]$ for $H \leq G$ induce
a grading on $\A[X]$ for arbitrary finite $G$-sets $X$ via
the box product; for now,
we call this grading the \textit{box grading.}
\end{cor}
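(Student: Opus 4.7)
The plan is to transfer the grading through the explicit presentation afforded by Theorem \ref{thm:boxproduct}. Writing $X = \coprod_{i=1}^N G/H_i$ and using that $\boxtimes$ is the coproduct in $G$-Tambara functors, we identify $\A[X] \cong \boxtimes_i \A[G/H_i]$, so
\[
\A[X](G/L) = \left(\bigoplus_{H \leq L} S_H^L\right)\big/F, \qquad S_H^L = \bigotimes_{i=1}^N \A[G/H_i](G/H).
\]
Each factor $\A[G/H_i](G/H)$ is $\Oh_H^w$-graded by \S 3, and the tensor product of graded rings inherits the sum grading; via the submonoid inclusion $\Oh_H^w \hookrightarrow \Oh_L^w$, we regard each $S_H^L$ as $\Oh_L^w$-graded, whence $\bigoplus_H S_H^L$ is $\Oh_L^w$-graded. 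The box grading on $\A[X](G/L)$ will then be obtained by descending through the quotient by $F$.

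The key step is verifying that $F$ is a homogeneous submodule. For a Frobenius relation indexed by $H \leq K \leq L$ and the $i$\textsuperscript{th} factor, take homogeneous inputs $x_j \in \A[G/H_j](G/K)$ of degree $(n_j, K_j)$ for $j \neq i$ and $x_i \in \A[G/H_i](G/H)$ of degree $(n_i, K_i)$, where $K_j \leq K$ and $K_i \leq H$. By Lemma \ref{lem:tr_res_grading}, $\Tr_H^K(x_i)$ still has degree $(n_i, K_i)$, while each $\Res_H^K(x_j)$ acquires degree $(n_j, K_j \cap H)$. A direct computation gives LHS-degree $(\sum_j n_j, \bigcap_j K_j)$ in $\Oh_K^w$ and RHS-degree $(\sum_j n_j, K_i \cap \bigcap_{j \neq i}(K_j \cap H))$ in $\Oh_H^w$; because $K_i \leq H$ we have $\bigcap_j K_j \leq H$, and the two degrees agree as elements of $\Oh_L^w$. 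For a Weyl relation $x_1 \otimes \cdots \otimes x_N = c_\ell(x_1) \otimes \cdots \otimes c_\ell(x_N)$, the numerical parts match and the subgroup coordinates differ by conjugation by $\ell \in L$; so after identifying subgroups of $L$ in the grading up to $L$-conjugacy, both sides become homogeneous of the same degree.

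Finally, one checks that the restriction, transfer, and conjugation maps on $\A[X]$ respect this grading, which follows immediately from the explicit formulas in Theorem \ref{thm:boxproduct} together with Lemma \ref{lem:tr_res_grading} applied factorwise: transfer merely relabels the indexing subgroup of the summand $S_H^L$, while restriction intersects the subgroup part with the smaller group and then conjugates, precisely as in the transitive case.

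The main obstacle is the bookkeeping around the Weyl relation: the literal $\Oh_L^w$-grading is not stable under conjugation by elements of $L$, so the natural home of the grading is $\Oh_L^w$ modulo $L$-conjugation on the subgroup coordinate. When $G$ is a Dedekind group this subtlety disappears entirely since all subgroups are normal; in that setting the box grading recovers the naive grading of Remark \ref{rem:naive}, a comparison which (as indicated there) requires a short separate argument beyond the present corollary.
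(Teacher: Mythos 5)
Your proof takes essentially the same route as the paper's: decompose $\A[X](G/L)$ via Theorem \ref{thm:boxproduct} as a quotient of $\bigoplus_{H \le L} S_H^L$, grade each summand by $\Oh_H^w \hookrightarrow \Oh_L^w$, and check homogeneity of the defining relations, invoking Lemma \ref{lem:tr_res_grading}. Your Frobenius computation matches the paper's.

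Where you go further is the Weyl relation. The paper's proof explicitly addresses only the Frobenius relations, remarking that it will ``explain this for the Dedekind case,'' where the Weyl relations are automatically homogeneous since conjugation fixes every subgroup; for general $G$ the paper simply drops the subgroup coordinate and grades over $\N$ alone. You are right to surface the issue that for non-normal subgroups the Weyl relation does not preserve the $\Oh_L^w$-degree on the nose, so your treatment is in fact the more careful one. One caveat on your proposed repair (grading over $\Oh_L^w$ modulo $L$-conjugation on the subgroup coordinate): it is not clear this quotient is a well-defined monoid, since $K_1 \sim K_1'$ and $K_2 \sim K_2'$ need not imply $K_1 \cap K_2 \sim K_1' \cap K_2'$. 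So for general $G$ the paper's expedient of discarding the subgroup coordinate remains the cleanest option, and your observation correctly identifies why some such coarsening is unavoidable outside the Dedekind setting. Your last remark, that identifying the box grading with the naive grading requires a separate argument, is also correct and matches what the paper does in the ensuing lemma.
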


\begin{proof}
We explain this for the Dedekind case; the nonabelian
case is obtained by dropping the nonnumerical component
of the degree.
For each $i$, let $T_i = \A[G/H_i]$ and
$T = \A[X] = \boxtimes_{i=1}^N T_i$. We adopt the same
notation for the $S_H^L$ in Theorem \ref{thm:boxproduct}.
Note that each $S_H^L$ has a grading over $\Oh_H^w$ induced
from the grading on each $T_i(G/H)$; by identifying
$\Oh_H^w$ as a subset of $\Oh_L^w$, this induces a grading of
$S_H^L$ over $\Oh_L^w$. We claim that this endows
$T(G/L)$ with the structure of a graded ring over $\Oh_L^w$,
for which it suffices to show that the Frobenius relations
are homogeneous.

Indeed, suppose $H \leq K$ and
$x_i$, $i = 1, \ldots, \hat{j}, \ldots N$ are elements
of degree $(n_i, J_i)$ in $T_i(G/K)$,
while $x_j$ is an element of degree
$(n_j, J_j)$ in $T_j(G/H)$. Then $\Tr_H^K(x_j)$ is also
an element of degree $(n_j, J_j)$ in $T_j(G/K)$ by
Lemma \ref{lem:tr_res_grading}, so
$x_1 \otimes \cdots \otimes \Tr_H^K(x_j) \otimes \cdots \otimes x_N$
is of degree $(\sum n_i, \bigcap J_i)$ in $S_K^L$. On the other hand,
$\Res_H^K(x_1) \otimes \cdots \otimes x_j \otimes \cdots
\otimes \Res_H^K(x_N)$ is of degree $(\sum n_i, \bigcap J_i \cap H)$
in $S_K^L$. Since $J_j \leq H$ by assumption,
these two degrees coincide and $\A[X]$ is graded.
\end{proof}

\begin{rem}
We note that the natural isomorphism
$\A[X] \to \A[X \sqcup \varnothing] \cong \A[X] \otimes \A[\varnothing]
\cong \A[X]$ is graded by inspection
(when $\A[X \sqcup \varnothing]$ is given the box grading),
since all elements of $\A[\varnothing](G/L)$ have degree
$(0, L)$.
Hence the box grading is
well-defined.
\end{rem}

\begin{lem}
The box grading on $\A[X]$ coincides with the naive grading
of Remark \ref{rem:naive}.
\end{lem}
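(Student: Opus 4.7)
The plan is to write $X = G/H_1 \sqcup \cdots \sqcup G/H_N$, identify $\A[X] \cong T_1 \boxtimes \cdots \boxtimes T_N$ with $T_i = \A[G/H_i]$, and then check that on each summand $S_H^L$ of the Theorem~\ref{thm:boxproduct} presentation of $\A[X](G/L)$, the box grading coincides with what the naive grading assigns. Recall that the box grading on an element $x_1 \otimes \cdots \otimes x_N \in S_H^L$, where each $x_i \in T_i(G/H)$ has (transitive) naive degree $(n_i, K_i) \in \Oh_H^w$ with $K_i \leq H$, is $(\sum n_i, \bigcap K_i) \in \Oh_H^w \hookrightarrow \Oh_L^w$.

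First I would observe that each universal morphism $T_i \to \A[X]$, induced by $R_{g_i}$ for $g_i: G/H_i \hookrightarrow X$ the canonical inclusion, preserves the naive grading. Concretely, composition with $R_{g_i}$ in $\bispan_G$ sends an irreducible polynomial $[G/H_i \ot A \to G/K \to G/L]$ to $[X \ot A \to G/K \to G/L]$, where the leftmost map is precomposed with $g_i$; the middle object $G/K$ and the map $A \to G/K$ are unchanged, so the naive degree $(\deg(A \to G/K), K)$ is preserved.

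Next I would exploit that the Dress pairing $f_H: T_1(G/H) \otimes \cdots \otimes T_N(G/H) \to \A[X](G/H)$ is a ring map (by the Tambara structure on $\A[X]$), so that $f_H(x_1 \otimes \cdots \otimes x_N)$ equals the product in $\A[X](G/H)$ of the images of the $x_i$ under the universal morphisms. Each such image has naive degree $(n_i, K_i)$ by the previous step, and since multiplication of irreducible polynomials is graded by the earlier degree lemma (whose proof uses only that pullback preserves degrees and the orbit decomposition $G/K \times_{G/L} G/K' \cong \coprod G/(K \cap K')$ in the Dedekind case, which Remark~\ref{rem:naive} extends verbatim to non-transitive $X$), an easy induction on $N$ yields that the product has naive degree $(\sum n_i, \bigcap K_i)$. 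This matches the box degree assigned to $x_1 \otimes \cdots \otimes x_N$ in $S_H^H$.

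Finally, since $\Tr_H^L$ is graded by Lemma~\ref{lem:tr_res_grading} (extended to non-transitive $X$ via Remark~\ref{rem:naive}), the image $\Tr_H^L(f_H(x_1 \otimes \cdots \otimes x_N))$ of a class in $S_H^L$ has naive degree $(\sum n_i, \bigcap K_i)$, which is exactly its box degree under the inclusion $\Oh_H^w \hookrightarrow \Oh_L^w$. Since every element of $\A[X](G/L)$ is a sum of such transferred products, the two gradings coincide on all of $\A[X](G/L)$. The proof is essentially bookkeeping; the main thing to verify carefully is that the earlier grading results for $\A[G/H]$ (multiplicativity of naive degree and gradedness of $\Tr$, $\Res$) really do go through for non-transitive source $X$, which is just a matter of observing that the arguments are local on the middle object $G/K$ and use no transitivity of the leftmost set.
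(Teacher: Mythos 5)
Your proposal is correct and follows essentially the same route as the paper: both reduce, via the box-product presentation, to transferred images of the Dress pairing, use that the universal map $T_i \to \A[X]$ is composition with $R_{g_i}$ and hence preserves the naive degree, and invoke the multiplicativity of the naive degree together with the gradedness of $\Tr$ to conclude. The only difference is direction of exposition (you build up from elementary tensors, the paper reduces down to them), which is immaterial.
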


\begin{proof}
Again, we explain this for the Dedekind case; the nonabelian
case is obtained by dropping the nonnumerical component
of the degree.
Write $X = \sqcup_i G/H_i$, $T = \A[X]$, and $T_i = \A[G/H_i]$.
We want to show that the grading coincides on $\A[X](G/L)$.
Since the transfer homomorphisms $\Tr_L^{L'}$ are graded
in the same way for both gradings, it suffices by induction
to show that the grading coincides on elements of
$\A[X](G/L)$ which do not lie in the image of transfer
from lower levels, i.e.
it suffices to show this for elements in the image of the
Dress pairing $f_L: T_1(G/L) \otimes \cdots \otimes T_N(G/L) \to
T(G/L)$. This is a graded ring homomorphism for the box
grading (essentially by definition), so it suffices to show
that the elements $f_L(1 \otimes \cdots \otimes 1 \otimes x_i
\otimes 1 \otimes \cdots \otimes 1)$ have the same degree
with respect to both the box and naive gradings. Now
$f_L(1 \otimes \cdots \otimes 1 \otimes x_i
\otimes 1 \otimes \cdots \otimes 1)$ is the image of
$x_i \in T_i(G/L)$ under the natural map
$T_i \to T$, which is given by
\[
[G/H_i \xleftarrow{f} A \to B \to G/L] \mapsto
[X \xleftarrow{j \circ f} A \to B \to G/L],
\]
where $j: G/H_i \to X$ is the natural inclusion. Thus, if
$x_i$ has degree $(n, K)$ with respect to the naive grading,
then it also has degree $(n, K)$ with respect to the box grading.
\end{proof}

\begin{cor}
Let $f: X \to Y$ be a map of finite $G$-sets.
The induced map $R_f^\ast: \A[X] \to \A[Y]$ is levelwise graded
with respect to the identity homomorphism $\Oh_L^w \to \Oh_L^w$.
If moreover $f$ has a well-defined degree, e.g.
if $Y$ is transitive, then
$N_f^\ast: \A[Y] \to \A[X]$ is levelwise graded with
respect to the monoid homomorphism $\phi: \Oh_L^w \to \Oh_L^w$
given by
\[
\phi(n, K) = (\deg(f) n, K).
\]
\end{cor}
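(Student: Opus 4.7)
The strategy is to compute, for each of $R_f^\ast$ and $N_f^\ast$, the explicit effect on an irreducible polynomial at the level of bispans and then read off the degree. Since the naive grading of an irreducible polynomial $[Z \leftarrow A \to B \to G/L]$ (with $B$ transitive, say $B = G/K$) is the pair $(|A|/|B|, K) \in \Oh_L^w$, it suffices to track how the $G$-sets $A$ and $B$ and the middle map $A \to B$ transform.

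For $R_f^\ast: \A[X] \to \A[Y]$: this morphism is induced by pre-composition with $R_f = [Y \xleftarrow{f} X \xrightarrow{1} X \xrightarrow{1} X] \in \bispan_G(Y, X)$. Applying the composition formula from \S2 gives
\[
R_f^\ast\bigl([X \xleftarrow{p} A \xrightarrow{q} B \xrightarrow{r} G/L]\bigr)
= [Y \xleftarrow{f \circ p} A \xrightarrow{q} B \xrightarrow{r} G/L],
\]
so only the leftmost leg of the bispan is altered. Because $A$, $B = G/K$, and the map $A \to B$ are all untouched, the degree $(|A|/|B|, K)$ is preserved, which is exactly the claim that $R_f^\ast$ is graded with respect to the identity on $\Oh_L^w$.

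For $N_f^\ast: \A[Y] \to \A[X]$: this morphism is induced by pre-composition with $N_f = [X \xleftarrow{1} X \xrightarrow{f} Y \xrightarrow{1} Y] \in \bispan_G(X, Y)$. The analogous calculation yields
\[
N_f^\ast\bigl([Y \xleftarrow{p} A \xrightarrow{q} B \xrightarrow{r} G/L]\bigr)
= [X \leftarrow A \times_Y X \to B \xrightarrow{r} G/L],
\]
where $A \times_Y X$ is the pullback of $p \colon A \to Y$ along $f$, the leftmost leg is the projection onto $X$, and the middle map factors as $A \times_Y X \to A \xrightarrow{q} B$.

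The key observation for $N_f^\ast$ is that when $f$ has a well-defined degree $d = \deg(f)$---in particular when $Y$ is transitive, so that every fiber of $f$ has the same cardinality---pullback along $f$ scales cardinality by $d$: for each $a \in A$ the fiber of $A \times_Y X \to A$ over $a$ is in bijection with $f^{-1}(p(a))$, which has $d$ elements, so $|A \times_Y X| = d \cdot |A|$. Combined with the fact that $B = G/K$ is unchanged, the degree transforms as $(n, K) \mapsto (d \cdot n, K) = \phi(n, K)$, which gives the desired grading. The only mildly technical step is recognizing the pullback structure of $A \times_Y X$ in the bispan composition; this is a direct application of the composition formula in \S2 and the main obstacle is purely bookkeeping.
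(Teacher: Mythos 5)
Your proposal is correct and follows essentially the same route as the paper: compute the effect of pre-composition with $R_f$ and $N_f$ on an irreducible bispan via the composition formula, observe that $R_f^\ast$ only changes the leftmost leg while $N_f^\ast$ replaces $A$ by the pullback $A \times_Y X$, and count fibers to see the degree scales by $\deg(f)$. The paper's proof states these two formulas with less detail; your fiber-cardinality bookkeeping for $|A \times_Y X| = \deg(f)\cdot|A|$ is exactly the implicit step it elides.
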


\begin{proof}
$R_f^\ast$ is given by
\[
[X \xleftarrow{p} A \to B \to G/L]
\mapsto
[Y \xleftarrow{f \circ p} A \to B \to G/L],
\]
which evidently preserves the degree of $A \to B$.
$N_f^\ast$ is given by
\[
[Y \ot A \to B \to G/L]
\mapsto
[X \ot A \times_Y X \to B \to G/L],
\]
which changes the degree by a multiplicative factor of
$\deg(f)$.
\end{proof}

\begin{rem}
The corresponding
result for general $G$ is obtained by dropping the
non-numerical component.
\end{rem}

\begin{lem}
Let $X$ be a finite $G$-set and $T$ a Tambara functor.
The free $T$-algebra $T[X] \cong T \boxtimes \A[X]$
inherits a levelwise grading from $\A[X]$ over $\N$.
\end{lem}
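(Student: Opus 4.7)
The plan is to define the grading using the presentation of Theorem~\ref{thm:boxproduct} and then check that it descends to a ring grading at each level. Fix $L \leq G$ and recall
\[
T[X](G/L) = \left( \bigoplus_{H \leq L} S_H^L \right)/F
\]
where $S_H^L = T(G/H) \otimes \A[X](G/H)$. Each $\A[X](G/H)$ carries the naive $\N$-grading of Remark~\ref{rem:naive} (obtained by forgetting the subgroup component of the $\Oh_H^w$-grading), so I would grade $S_H^L$ over $\N$ by declaring $T(G/H)$ to sit entirely in degree $0$, i.e.\ $\deg(t \otimes a) = \deg_\N(a)$. This induces a candidate $\N$-grading on $\bigoplus_H S_H^L$.

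First I would verify that the Frobenius and Weyl relations defining $F$ are $\N$-homogeneous. For a Weyl relation $t \otimes a \sim c_\ell(t) \otimes c_\ell(a)$, both sides lie in degree $\deg_\N(a) = \deg_\N(c_\ell(a))$, since $c_g$ preserves $\N$-degrees on $\A[X]$. For the Frobenius relations, $\Tr_H^K$ and $\Res_H^K$ on $\A[X]$ preserve the $\N$-component of degree by Lemma~\ref{lem:tr_res_grading} and Remark~\ref{rem:naive}, so both sides of each relation have the same $\N$-degree. Hence $T[X](G/L)$ inherits an $\N$-grading as an abelian group.

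The main task is verifying compatibility with multiplication. Any element of $T[X](G/L)$ is a sum of terms of the form $\Tr_H^L f_H(t \otimes a)$, where $f_H \colon T(G/H) \otimes \A[X](G/H) \to T[X](G/H)$ is the $H$-level Dress pairing; such a term is $\N$-homogeneous of degree $\deg_\N(a)$. Using Frobenius reciprocity, the double coset formula, and the fact that Dress pairings commute with $\Tr$, $\Res$, and $c_g$, one computes the product
\[
\Tr_H^L f_H(t \otimes a) \cdot \Tr_K^L f_K(t' \otimes a')
\]
as a sum over $g \in H \backslash L / K$ of terms $\Tr_M^L f_M(s \otimes b)$, where $M = H \cap gKg^{-1}$, $s$ is built from $t, t'$ via restrictions and conjugations, and $b$ is built from $a, a'$ via restrictions, conjugations, and multiplication in $\A[X](G/M)$. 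Because $\Res$, $c_g$, and $\Tr$ preserve $\N$-degree while multiplication in $\A[X]$ adds them, every such summand lies in $\N$-degree $\deg_\N(a) + \deg_\N(a')$, as desired.

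The hard part is this multiplicative compatibility: the ring structure on $T[X](G/L)$ does not respect the direct sum decomposition over subgroups, and the verification requires carefully tracking how cross-terms between different $S_H^L$ arise through the double coset formula. Once this bookkeeping is complete, $T[X]$ inherits the desired levelwise $\N$-grading from $\A[X]$.
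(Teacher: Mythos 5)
Your proposal is correct and follows essentially the same route as the paper: grade each $S_H^L = T(G/H)\otimes \A[X](G/H)$ by placing $T$ in degree $0$, and check that the Frobenius (and Weyl) relations are $\N$-homogeneous using the fact that $\Res$ and $\Tr$ on $\A[X]$ preserve the numerical component of the degree. The paper's proof stops at the homogeneity of the relations and treats the multiplicative compatibility as immediate, whereas you spell it out; that extra verification is sound but not a different method.
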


\begin{proof}
We again adopt the notation of Theorem \ref{thm:boxproduct}.
Each $T(G/L)$ is trivially graded over $\N$
by giving every element degree $0$. Hence,
each $S_H^L$ is graded over $\N$. It remains
to verify that the Frobenius relations are homogeneous.
But this is obvious by inspection, since $\Res_H^K$
and $\Tr_H^K$ on $\A[X]$ do not alter the numerical component
of the degree.
\end{proof}

\section{Applications to Norm Functors}\label{sec:norms}

Let $G$ be a finite group and suppose $H \leq G$.
The functor $\Ind_H^G$ sending an $H$-set $X$ to its induced
$G$-set $G \times_H X$ is left adjoint to the functor
$\Res_H^G$ which sends a $G$-set to its underlying $H$-set.
This induces an adjunction on the polynomial categories
$\bispan_H$ and $\bispan_G$ (where restriction
is \textit{left} adjoint), which further
induces an adjunction on
the categories $H\cat{Tamb}$ and $G\cat{Tamb}$. More precisely,
defining for any Tambara functor $r_H^G T = T \circ \Ind_H^G$
and $c_H^G T = T \circ \Res_H^G$, we have
\[
\Hom_{H\cat{Tamb}}(r_H^G T, R)
\cong \Hom_{G\cat{Tamb}}(T, c_H^G R).
\]
The functor $r_H^G$ has a left adjoint,
denoted $n_H^G: H\cat{Tamb} \to G\cat{Tamb}$,
which is defined pointwise
by the left Kan extension
\[
\begin{tikzcd}
\bispan_H \ar[rr, "T"] \ar[dr, "\Ind_H^G", swap]
& \ar[d, phantom, "\Downarrow" description]& \cat{Set} \\
& \bispan_G \ar[ur, "n_H^G T", swap] &
\end{tikzcd}
\]
and thus given by the formula
\[
n_H^G T (Y) = \int^{X \in \bispan_H}
\bispan_G(G \times_H X, Y) \times T(X).
\]
In particular, we see from this formula that
$n_H^G$ preserves quotients (levelwise surjections) $T \to R$.
See \cite{rolfthesis} or \cite{hill2019equivariant}
for more details on the functor $n_H^G$.

If $X$ is an $H$-set, we compute
\begin{align*}
\Hom_{G\cat{Tamb}}(n_H^G \A[X], T)
&\cong \Hom_{H\cat{Tamb}} (\A[X], r_H^G T) \\
&\cong (r_H^G T)(X) \\
&\cong T(G \times_H X),
\end{align*}
so there is a natural isomorphism
$n_H^G\A[X] \cong \A[G \times_H X]$.

\begin{lem}
Let $G$ be a finite group. The following are equivalent:
\begin{enumerate}
\item The norm functors $n_H^K$ preserve levelwise
finite generation over
$\Z$ for all finite groups $H \leq K \leq G$.

\item Every subgroup $H\leq G$ is Tambara finite.

\item Every subgroup $H \leq G$ is transitively Tambara finite.
\end{enumerate}
\end{lem}

\begin{proof}
The equivalence of (b) and (c) follows from our work in
\S\ref{sec:boxproduct}. Assume (a); in light of the isomorphism
$n_H^K \A[H/H] \cong \A[K/H]$ and the levelwise
finite generation of
$\A[H/H]$ established in 
\cite{schuchardt2025algebraicallyclosedfieldsequivariant},
$\A[K/H]$ is finitely generated and thus (a) implies (c).
Now assume (b) and let $T$ be a levelwise finitely generated
$H$-Tambara functor. Then $T$ receives a (levelwise) surjection
from $\A[X]$ for some finite $H$-set $X$: the (finite)
collection of generators $x_i$ in the $T(H/H_i)$ for $H_i \leq H$
induce a morphism $\boxtimes_i \A[H/H_i] \to T$ which includes
all the $x_i$ in its image, so $\boxtimes_i \A[H/H_i] \to T$
is a surjection. In our situation, the surjection
$\A[X] \to T$ induces
a surjection
$\A[K \times_H X] \cong n_H^K \A[X] \to n_H^K T$,
so $n_H^K T$ is levelwise finitely generated.
\end{proof}

\begin{cor}\label{cor:norm_preservation_general}
The norm functors $n_H^G$ preserve levelwise
finite generation over $\Z$ for all
finite $H \leq G$ iff all finite groups $G$ are
Tambara finite.
\end{cor}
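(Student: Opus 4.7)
The plan is to deduce this corollary directly from the preceding lemma, which for any fixed finite group $G$ establishes the equivalence of the three conditions (a), (b), (c). The corollary simply upgrades this to a statement over \emph{all} finite groups simultaneously, so the work is to match up quantifiers rather than to prove anything new.

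For the forward direction, I would assume that $n_H^K$ preserves levelwise finite generation over $\Z$ for every inclusion of finite groups $H \leq K$. Given an arbitrary finite group $G$ and a finite $G$-set $X$, apply the preceding lemma with the ambient group taken to be $G$ itself: the blanket hypothesis specializes to condition (a) of the lemma (for the group $G$), and hence implies condition (b), giving that $\A[X]$ is levelwise finitely generated.

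For the reverse direction, I would assume that $\A[X]$ is levelwise finitely generated for every finite group and every finite set over it. Fix any inclusion $H \leq G$ of finite groups; to show $n_H^G$ preserves levelwise finite generation, apply the preceding lemma with ambient group $G$. The hypothesis specializes to condition (b) of the lemma (for $G$), so condition (a) holds, i.e.\ $n_{H'}^{K'}$ preserves levelwise finite generation for all $H' \leq K' \leq G$; in particular $n_H^G$ does.

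The only mildly delicate point---and the closest thing to an obstacle---is the mismatch between the corollary's formulation (which quantifies norm functors as $n_H^G$ for $H \leq G$, ranging over all finite $G$) and the lemma's formulation (which for a fixed $G$ speaks of $n_H^K$ with $H \leq K \leq G$). This is resolved by observing that once we allow $G$ to vary over all finite groups, the families $\{n_H^G : H \leq G\}_G$ and $\{n_H^K : H \leq K \leq G\}_G$ range over exactly the same collection of norm functors, so the two formulations are tautologically equivalent. Consequently no genuine computation is needed beyond invoking the lemma.
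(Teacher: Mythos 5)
Your proposal is correct and matches the paper's treatment: the corollary is stated immediately after the lemma with no separate proof, precisely because it is the quantifier-over-all-finite-groups upgrade of the lemma's equivalence (a) $\Leftrightarrow$ (b), exactly as you argue. The quantifier bookkeeping in both directions is sound.
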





\begin{thm}\label{thm:norm_preservation}
Let $H \leq G$, where $G$ is a finite Dedekind group.
Then $n_H^G$ preserves levelwise finite generation.
\end{thm}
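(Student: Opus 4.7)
The plan is to prove this as a formal consequence of Theorem \ref{thm:finite_generation_general} together with the observation (noted in the text) that $n_H^G$ preserves levelwise surjections and satisfies $n_H^G \A[X] \cong \A[G \times_H X]$. Given a levelwise finitely generated $H$-Tambara functor $T$, the goal is to exhibit $n_H^G T$ as a quotient of $\A[Y]$ for some finite $G$-set $Y$; levelwise finite generation will then follow since quotients of finite-type $\Z$-algebras are finite-type.

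First, I would construct a levelwise surjection $\A[X] \twoheadrightarrow T$ from a free $H$-Tambara functor on some finite $H$-set $X$. This step is essentially the same argument that appears in the proof of the equivalence lemma preceding this theorem: pick a finite set of ring generators $\{x_{K,i}\}_i \subseteq T(H/K)$ for each $K \leq H$, use the universal property of each $\A[H/K]$ to produce a morphism $\A[H/K] \to T$ hitting the corresponding generator, and assemble these into a single morphism $\A[X] \to T$ using that $\A[X] \cong \boxtimes_{K,i} \A[H/K]$ when $X = \bigsqcup_{K,i} H/K$. By construction this morphism is levelwise surjective, because its image at level $H/K$ contains a chosen generating set for the ring $T(H/K)$.

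Next, I would apply $n_H^G$. Since $n_H^G$ preserves surjections and $n_H^G \A[X] \cong \A[G \times_H X]$, we obtain a levelwise surjection $\A[G \times_H X] \twoheadrightarrow n_H^G T$. Because $G$ satisfies $(\dagger)$ and $G \times_H X$ is a finite $G$-set, Theorem \ref{thm:finite_generation_general} gives that $\A[G \times_H X]$ is levelwise finitely generated over $\Z$; hence so is its quotient $n_H^G T$, completing the proof.

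There is no substantial obstacle here once Theorem \ref{thm:finite_generation_general} and the quotient-preservation property of $n_H^G$ are in hand; the theorem is a purely formal consequence. The argument also makes transparent that the assumption $(\dagger)$ on $G$ is used only through Theorem \ref{thm:finite_generation_general}, so the same reasoning would upgrade any future extension of that finite generation result to a corresponding statement about norm functors, matching the ``iff'' of Corollary \ref{cor:norm_preservation_general}.
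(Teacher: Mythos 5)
Your proposal is correct and is essentially the paper's own argument: the paper establishes this exact chain (levelwise surjection $\A[X] \twoheadrightarrow T$ from a free functor on generators, $n_H^G \A[X] \cong \A[G \times_H X]$, preservation of surjections by $n_H^G$, then Theorem \ref{thm:finite_generation_general}) in the lemma preceding Corollary \ref{cor:norm_preservation_general}, and Theorem \ref{thm:norm_preservation} is deduced from it in the same way. No issues.
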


\printbibliography

\end{document}